\theoremstyle{plain}
\newtheorem{theorem}{Theorem}[section]
\newtheorem{lemma}[theorem]{Lemma}
\newtheorem{proposition}[theorem]{Proposition}
\newtheorem{corollary}[theorem]{Corollary}
\newtheorem*{conjecture*}{Conjecture}
\theoremstyle{definition}
\theoremstyle{remark}
\newtheorem{remark}{Remark}[section]
\newcommand{\IP}{\mathbb P}
\renewcommand{\P}{\mathbb P}
\newcommand{\IQ}{\mathbb Q}
\newcommand{\QQ}{\mathbb Q}
\newcommand{\Q}{\mathbb{Q}}
\newcommand{\IE}{\mathbb E}
\newcommand{\IR}{\mathbb R}
\newcommand{\IC}{\mathbb C}
\newcommand{\IN}{\mathbb N}
\newcommand{\N}{\mathbb N}
\newcommand{\GG}{\mathbb G}
\newcommand{\DD}{\mathbb D}
\newcommand{\EE}{\mathbb E}
\newcommand{\mcH}{\mathcal{H}}
\newcommand{\mcL}{\mathcal{L}}
\newcommand{\R}{\mathbb R}
\newcommand{\Frechet}{Fr\'echet{}}
\newcommand{\dimension}{d}
\newcommand{\Rdim}{\IR^{\dimension}}
\newcommand{\dx}{\mathrm{d}}
\newcommand{\mcM}{\mathcal{M}}
\DeclareMathOperator{\tr}{tr} 
\newcommand{\wass}{\mathcal{W}}
\newcommand{\Gwhat}{\widehat{\mathcal{GW}}}
\newcommand{\Gw}{\mathcal{GW}}
\newcommand{\1}{\mathds{1}}
\newcommand{\nn}{\nonumber}
\begin{document}

 \title{Limit laws of the empirical Wasserstein distance: Gaussian distributions}
 \author{Thomas Rippl\footnote{Institute for Mathematical Stochastics, Georg-August-Universit\"at G\"ottingen, G\"ottingen, Germany; trippl@uni-goettingen.de},
 Axel Munk\footnote{Institute for Mathematical Stochastics, Georg-August-Universit\"at G\"ottingen and Max Planck Institute for Biophysical Chemistry, G\"ottingen, Germany; munk@math.uni-goettingen.de},
 Anja Sturm\footnote{Institute for Mathematical Stochastics, Georg-August-Universit\"at G\"ottingen, G\"ottingen, Germany; asturm@math.uni-goettingen.de}}

\thispagestyle{empty}
\begin{abstract}
We derive central limit theorems for the Wasserstein distance between the empirical distributions of Gaussian samples.
The cases are distinguished whether the underlying laws are the same or different. Results are based on the (quadratic) Fr\'echet differentiability of the Wasserstein distance in the gaussian case. 
Extensions to elliptically symmetric distributions are discussed as well as several applications such as bootstrap and statistical testing.
\end{abstract}
 
 \maketitle
 
\noindent {\bf Keywords:} Mallow's metric, transport metric, delta method,
 limit theorem, goodness-of-fit, Fr\'echet derivative, resolvent operator, bootstrap, elliptically symmetric distribution
\\
\smallskip

\noindent {\bf AMS 2010 Subject Classification:} Primary: 62E20, 60F05; Secondary: 58C20, 90B06.\\

\noindent {\bf Acknowledgement:} Support of DFG RTN2088 is gratefully acknowledged.
 We are grateful to a referee, the associate editor and Michael Habeck for helpful comments.

 \section{Introduction}
Let $\P,\Q$ be in  $\mcM_1(\R^d)$, the probability measures on $\R^d$.
Consider $\pi_i:\IR^d\times \IR^d \to \IR, x=(x_1,x_2) \mapsto x_i$, $i=1,2$, the projections on the first or the second $d$-dimensional vector,  and define
  \[ \Pi(\IP,\IQ) = \{ \mu \in \mathcal{M}_1(\IR^d\times \IR^d): \, \mu \circ \pi_1^{-1} = \IP, \mu \circ \pi_2^{-1} = \IQ\}\]
  as the set of probability measures on $\IR^d\times \IR^d$ with marginals $\IP$ and $\IQ.$
Then for $p \geq 1$ we define the \emph{$p$-Wasserstein distance} as
   \begin{eqnarray}
   \label{p-Wasserstein}
    \wass_p (\IP,\IQ) & := & \inf_{\mu \in \Pi(\IP,\IQ)} \left( \int_{\IR^{2d}} ||x-y||^p\, \mu(dx,dy)\right)^{1/p}  \, .
   \end{eqnarray}
There is a variety of interpretations and equivalent definitions of $\wass_p,$  for example as a mass transport problem; we refer the reader for extensive overviews to Villani~\cite{cV:07} and Rachev and R\"uschendorf~\cite{rachev1998mass}.

\smallskip

In this paper we are concerned with the \emph{statistical} task of estimating  $\wass_p (\IP,\IQ)$ from given data $X_1, \dotsc, X_n \sim \IP$ i.i.d.~(and possibly also from data $Y_1, \dotsc, Y_m \sim \Q$ i.i.d.)  and with the investigation of  certain characteristics of this estimate which are relevant for inferential purposes.
Replacing $\P$ by the empirical measure $\P_n$ associated with $X_1,\dotsc, X_n$ yields the empirical Wasserstein distance $\widehat{\wass}_{p,n}:=\widehat{\wass_p}(\IP_n, \IQ)$ which provides a natural estimate of $\wass_p(\P,\Q)$ for a given $\Q$.
Similarly, define  $\widehat{\wass}_{p,n,m} := \widehat{\wass}_p(\P_n,\Q_m)$ in the two sample case.
For inferential purposes (e.g.~testing or confidence intervals for $\wass_p(\P,\Q)$) it is of particular relevance to investigate the (asymptotic) distribution of the empirical Wasserstein distance.

\
This is meanwhile well understood for measures $\IP,\IQ$ on the real line $\R$  as in this case an explicit representation of the Wasserstein distance (and its empirical counterpart) exists (see e.g.~\cite{Frechet1951, Hoeffding1940, KR58, Vallender, Major1978487, Mallows}) 
\begin{eqnarray}\label{explonedimension}
\wass_p^p(\P,\Q)  =  \int_{[0,1]} |F^{-1}(t) - G^{-1}(t) |^p \, dt. 
\end{eqnarray}
Here,  $F(x) = \P((-\infty,x])$ and $G(x) = \Q((-\infty,x])$ for $x \in \R$ denote the c.d.f.s of $\P$ and $\Q$, respectively, and $F^{-1}$ and $G^{-1}$ its inverse quantile functions.
Now, $\widehat{\wass}_{p,n}$ is defined as in  \eqref{explonedimension} with $F^{-1}$ replaced by the empirical quantile function $F_n^{-1},$ and the representation \eqref{explonedimension} can be used to derive limit theorems based on the underlying quantile process $\sqrt{n}(F_n^{-1} -F^{-1})$. 
These results require a \emph{scaling rate} $(a_n)_{n\in \N}$ such that the laws 
\begin{equation}\label{scalinglimit}
a_n \left( \widehat{\wass_p^p}(\P_n,\Q) - \wass_p^p(\P,\Q)  + b_n \right), \text{ as } n\to \infty \end{equation} 
(for some centering sequence $(b_n)_{n \in \N}$) converge weakly to a (non-degenerate) \emph{limit distribution}.
Depending on whether $F=G$  as well as on the tail behavior of the distributions $F$ and $G$ we find ourselves in different asymptotic regimes. 
Roughly speaking, when $F=G$ (i.e.~$\P = \Q$, $\wass_p(\P,\Q)=0$), $a_n = n$ is the proper scaling rate, i.e.~the limit is of \emph{second order} and given 
by a weighted sum of $\chi^2$ laws (see e.g. \cite{del_barrio_central_1999, del2005asymptotics}).
In general, $b_n$ depends on the tail behavior of $F$. 
In contrast, when $F\neq G$, i.e.~$\wass_p^p(\P,\Q)>0$ for $a_n = \sqrt{n}$, $b_n =0$
the limit is of \emph{first order} and 
$\sqrt{n} (\widehat{\wass_p}(\IP_n, \IQ) - {\wass_p}(\IP, \IQ))$ is asymptotically normal
(see \cite{munk_nonparametric_1998, freitag2007nonparametric}) under appropriate tail conditions.
Various applications  of these and related distributional results, e.g.~for trimmed versions of the Wasserstein distance,  include the comparison of distributions and 
goodness of fit testing  (\cite{munk_nonparametric_1998, ABCM08, del2000contributions, freitag2005hadamard}), 
template registration (Section 4 in \cite{boissard2011distribution, Loubes}),  bioequivalence testing (\cite{freitag2007nonparametric}),   atmospheric research (\cite{zhou2011statistical}), or large scale microscopy imaging (\cite{ruttenberg2013quantifying}). 

\smallskip

In contrast to the real line ($d=1$), up to now limiting results as in  (\ref{scalinglimit}) remain elusive for $\R^d$, $d\geq 2$.
However, see \cite{AKT:84} and \cite{dobric_asymptotics_1995} for almost sure limit results and \cite{fournier_rate_2013} for moment bounds on $\widehat{\wass}_{p,n}$. Already the planar case $d=2$ is remarkably challenging (\cite{AKT:84}). One  difficulty
is that no simple characterization as in \eqref{explonedimension} via the (empirical) c.d.f's exists anymore. In particular, the couplings for which the infimum in (\ref{p-Wasserstein}) is attained are much more involved, see e.g.~\cite{knott1984optimal, ruschendorf1990characterization}.
We will come back to this in the context of our subsequent results later on.

In this article we aim to shed some light on the case $d\geq 2$ by further restricting the possible measures $\IP, \IQ$ to the Gaussians (and more generally to elliptical distributions). Here, a well known explicit representation of ${\wass_p}(\IP, \IQ)$ can be used (see e.g. \cite{dowson1982frechet}, \cite{olkin1982distance}, \cite{gelbrich1990formula}) which allows one to obtain explicit limit theorems again.  The Gaussian case is of particular interest as it provides, as shown in \cite{gelbrich1990formula}, a universal lower bound for any pair  $(\IP,\IQ)$ having the same moments (expectation and covariance) as the 
Gaussian law, see  also \cite{Cuesta1996lower}.


\paragraph{Limit laws for the Gaussian Wasserstein distance}

More specifically, from now on let the laws $\P,\Q \in \mcM_1(\R^d)$ be in the class of $d$-variate normals, i.e.
\begin{equation}\label{eq.P.Q}
 \IP \sim N(\mu, \Sigma) \text{ and }\IQ \sim N(\nu, \Xi)\text{ for some }\mu, \nu \in  \IR^{\dimension}, \, \Sigma, \Xi \in S_+(\IR^\dimension), 
\end{equation}
the  symmetric, positive definite, $\dimension$-dimensional matrices.
From now on we will also restrict to $p=2$.
 In this case  the Wasserstein distance between $N(\mu,\Sigma)$ and $N(\nu,\Xi)$ is computed as (see \cite{dowson1982frechet, olkin1982distance, GS:84})
 \begin{equation}\label{eq:was:dist:Gauss} 
 \Gw := \wass_2^2(\IP,\IQ) = \|\mu-\nu\|^2 + \tr(\Sigma) + \tr(\Xi) - 2\tr \left[ \left[\Sigma^{1/2} \Xi \Sigma^{1/2} \right]^{1/2} \right] \, .
 \end{equation}
Here, $\tr$ refers to the trace of a matrix and its square root  is defined in the usual spectral way.
The norm $\| \cdot \|$ is the Euclidean norm with corresponding scalar product denoted by $\langle \cdot, \cdot \rangle.$
Now, if we replace $\IP$ with the empirical measure $\IP_n$ and read $\mu$ and $\Sigma$ as a functional of $\P$, we obtain the empirical Wasserstein estimator $\Gwhat_n$ restricted to the $d$-dimensional Gaussian measures as
\begin{equation}\label{e.estimator}
  \begin{split}
  \Gwhat_n  & = \Gwhat_n (X_1,\dotsc, X_n, \Q)\\
  :&=  \wass_2^2\left(N (\hat{\mu}_n, \hat{\Sigma}_n),N(\nu, \Xi)\right )\\
  & = \|\hat \mu-\nu\|^2 + \tr(\hat \Sigma) + \tr(\Xi) - 2\tr \left[ \left[\hat \Sigma^{1/2} \Xi \hat \Sigma^{1/2} \right]^{1/2} \right] .
  \end{split}
\end{equation}
Similar to the case of the general empirical Wasserstein distance for $d=1$ we find in the following that the asymptotic behavior differs whether $\P = \Q$, i.e.~$\mu=\nu$ and $\Sigma = \Xi$ or $\P \neq \Q$.
Let us start with the latter case which turns out to be simpler.
We show in Theorem \ref{T.WS.ASYMP.ONESAMPLE}, whenever 
$\P \neq \Q$, i.e. $\mu \neq \nu$ or $\Sigma \neq \Xi$  a limit theorem as in (\ref{scalinglimit}) holds with $a_n = n^{1/2}$ and $b_n=0$, i.e.~as $n \rightarrow \infty,$
  \begin{equation}
 \label{eq:onesample.INT}
 \sqrt{n}\left(\wass_2^2\left(N (\hat{\mu}_n, \hat{\Sigma}_n),\IQ\right) - \wass_2^2(\IP, \IQ)\right) \Rightarrow N(0, \upsilon^2) .
 \end{equation}
Here the asymptotic variance can be explicitly computed as
\begin{align}
  \label{eq:varonesample.INT}
  \upsilon^2 &= 4 ( \nu - \mu)^t \Sigma (\nu-\mu) +  2 \tr (\Sigma^2 + \Sigma \Xi) - 4 \sum_{k=1}^d \kappa_k^{1/2}r_k^t \Xi^{-1/2} \Sigma \Xi^{1/2} r_k,
  \end{align}
where $\{(\kappa_k,r_k):\, k=1,\dotsc, d\}$ denotes the eigendecomposition of the symmetric matrix $\Xi^{1/2}\Sigma\Xi^{1/2}$ into orthonormal eigenpairs (consisting of eigenvalues and eigenvectors).
Here and in the following we denote by ${}^t$ the transpose of a vector (or matrix).
We will also treat the two sample case (Theorem \ref{T.WS.ASYMP}), where $\IQ$ is additionally estimated under the Gaussian restriction by a second independent sample.
In this case,  the eigendecomposition of $\Sigma$ itself given by $\{(\lambda_l,p_l):\, l=1,\dotsc, d\}$ additionally occurs in the limiting variance, which disappears for distinct eigenvalues of $\Sigma$, however.

\smallskip

Our proof relies on the Fr\'echet differentiability of the Wasserstein-distance in the Gaussian  case (see Theorem \ref{p:Phi:diff}) together with a Delta method (Theorem \ref{thm:vdv:delta}).
The formula for the Gaussian Wasserstein distance \eqref{eq:was:dist:Gauss} can be seen as a (non-linear) functional of symmetric operators.
The proof of its Fr\'echet differentiability is based on the second order perturbation of a general compact Hermitian operator (see Corollary \ref{cor:Second derivative}); this result is of interest on its own.
In a similar way we treat the case $\IP = \IQ$. Here, the first derivative vanishes and we show that the asymptotic distribution is determined by the Fr\'echet derivative of second order.
This gives for  $a_n = n$ and $b_n=0$ a non-degenerate limit which can be characterized as a quadratic functional of a Gaussian r.v., see Theorem \ref{T.WS.ASYMP:P=Q}.
Note that all scaling rates are independent of the dimension $d$ ($d$ enters in the constants, though) and coincide with those for dimension $d=1$ for the general empirical Wasserstein distance based on \eqref{explonedimension}.


\paragraph{Comparison to known results in $d\geq 2$}

Although distributional results of $\widehat{\wass}_{p,n}$ are not known for $d\geq 2$ it is illustrative to discuss our
limit results in the light of some known results on bounds of the moments of $\widehat{\wass}_{p,n}$ and a.s.~limits.
We will restrict to $p=2$, since our results apply only to that case.

A particularly well understood case for $\P = \Q$  is the uniform distribution on the $d$-dimensional hypercube, see \cite{AKT:84}, \cite{talagrand_matching_1992} and \cite{dobric_asymptotics_1995}.
In \cite{dobric_asymptotics_1995} it is shown that $c_n \wass_2^2(\P_n,\P) \to \lambda$ almost surely for a certain $\lambda \in (0,\infty)$ and $c_n = n^{2/d}$ when $d\geq 3$.
To the best of our knowledge a finer distributional asymptotics in the sense of 
\begin{equation}\label{e.tr5}
 a_n( \wass_2^2(\P_n,\P) + b_n) \Rightarrow Z
\end{equation}
for $b_n = - \lambda/c_n$ with non-degenerated r.v.~$Z$ is not known.
If it existed, it would require $c_n = o(a_n)$.
Our Theorem \ref{T.WS.ASYMP:P=Q} affirms the existence of the limit in \eqref{e.tr5} for the Gaussian Wasserstein estimator with $a_n = n$ and $b_n=0$.
The fact that $a_n= n$ grows faster than $c_n = n^{2/d}$ for $d \geq 3$ was expected by the previous argument.
Similar argumentation holds for the result in \cite{AKT:84} where upper and lower almost sure bounds $\lambda_1 < \lambda_2 \in (0,\infty)$ are given for $c_n = n/(\log n)^2$.
To subsume the comparison to the almost sure results: our rate $a_n=n$ is in the range of possible rates, i.e.~$1/a_n = o(1/c_n)$, which may be expected for non-trivial distributional limit results.
Recall, however, that we are not proving a limit as in \eqref{e.tr5}, but in the sense of \eqref{eq:onesample.INT}, where $\wass_2^2(\P_n,\Q)$ is replaced by $\Gwhat_n$.

It is also interesting to compare our rate for $\P = \Q$ to moment bounds.
In \cite{fournier_rate_2013} upper bounds are given for $E[\wass_2^2(\P_n,\P)]$  when $\P$ is a measure with finite moments of any order (recall that Gaussian distributions have moments of any order).
They obtain that $d_n E[\wass_2^2(\P,\P)] \leq C$ for a constant $C < \infty$ if $d_n=n^{2/d}$ for $d\geq 5$, $d_n = n^{1/2}$ for $d \leq 3$ and $d_n= n^{1/2} (\log (1+n))^{-1}$ for $d = 4$.
All those results are consistent with our result in the sense that $\limsup_{n \to \infty} d_n/a_n  < \infty$.

So far we have only discussed the case $\P = \Q$.
The literature on the case $\P \neq \Q$ is much scarcer.
For the case $d=1$, \cite{munk_nonparametric_1998} obtain in situations comparable to ours also asymptotical normality for $a_n = n^{1/2}$ and $b_n = 0$.
This is the same scaling rate as the one observed in our case.
For higher dimensions we do not know of any explicit results.
Theorem 3.9 in \cite{dBM13-2} gives a result which is similar to \eqref{eq:onesample.INT} except that their setting deals with an incomplete transport problem.
Their rate for $d\geq 1$ is also $a_n = n^{1/2}$ and they obtain that the left hand side of the corresponding version of \eqref{eq:onesample.INT} is bounded in probability.
In particular, they do not state an explicit limit law as we can give it in our special situation.

To the best of our knowledge, other results are not yet available for the case $\P \neq \Q$ in higher dimensions.

\paragraph{Elliptical distributions}

It is possible to  generalizate the result in \eqref{eq:onesample.INT} beyond the class of Gaussian distributions.
As \cite{gelbrich1990formula} showed in Theorems 2.1 and 2.4, formula \eqref{eq:was:dist:Gauss} holds for more general classes of distributions (with appropriate modifications); i.e.~elliptically symmetric probability measures.
We comment on this generalization in Remark \ref{r.gelbrich}.

\paragraph{Statistical Applications: Inference and Bootstrap}

 A bootstrap limiting result follows immediately from our proof as well. For the case of $\IP$ and $\IQ$ being different the first order term  in the Fr\'echet expansion determines the asymptotics and an $n$ out of $n$ bootstrap is valid  (see e.g.~\cite{vdV:98}). Other resampling schemes, such as a parametric bootstrap can be applied as well (see e.g.~\cite{shao1995jackknife}). When $\IP = \IQ$ the second order Fr\'echet derivative matters and one has to resample fewer than $n$ observations, i.e.~$o(n)$ (see e.g.~\cite{bickel1997resampling})  
to obtain bootstrap consistency. As the limiting laws are rather complicated, bootstrap seems to be a reasonable option for practical purposes, e.g.~confidence intervals for the Wasserstein distance in the Gaussian case can be obtained from this \cite{shao1995jackknife, davison1997bootstrap}. We provide more details on 
bootstrapping the Gaussian Wasserstein distance and an application to structure determination of proteins in Section \ref{ssection:Applications}.

\smallskip

\smallskip

The paper is organized as follows. In Section \ref{ss.limits}, we present the main results on the asymptotic distribution in the one and two sample case  and in Section~\ref{Sec:Frechetdiff} we provide the main results on Fr\'echet differentiability of the underlying functional. Next, we give two applications, one theoretical application regarding the bootstrap in Section~\ref{ssection:Applications} and a practical one regarding a data example for the positions of amino acids in a protein in Section~\ref{s.app}. In Section~\ref{s.2} we present the proofs of the main theorems.
Finally, the appendix comprises some required facts and technical results on functional differentiation.
  
\section{Main Results}\label{s.results}


\subsection{Limit laws for the Gaussian Wasserstein distance}\label{ss.limits}

This section contains the three main results on convergence of the empirical Wasserstein distance estimator $\Gwhat_n$  defined in \eqref{e.estimator}.
The first two theorems present the case where $\P \neq \Q$ and the last one states the result for $\P = \Q$.

\medskip

Suppose now that $\P \neq \Q$ are both Gaussian distributions as in \eqref{eq.P.Q}.
We denote their Wasserstein distance by $\Gw := \Gw(\P,\Q)$.
Suppose we have independent samples from these different distributions.
Then we obtain the following result.
\begin{theorem}[Asymptotics for the empirical Wasserstein distance in the one sample case, $\P \neq \Q$]\label{T.WS.ASYMP.ONESAMPLE}
 Let $\IP\neq \IQ$ in $\mathcal{M}_1(\IR^d)$ be Gaussian, $\IP \sim N(\mu,\Sigma), \IQ \sim N(\nu,\Xi)$ with $\Sigma$ and $\Xi$ having full rank.
 Let $X_1,\dotsc, X_n \stackrel{i.i.d.}{\sim} N(\mu,\Sigma)$ and consider the Gaussian Wasserstein estimator $\Gwhat_n$ from \eqref{e.estimator}.
 Then as $n \rightarrow \infty,$
  \begin{equation}
 \label{eq:onesample}
 \sqrt{n}\left(\Gwhat_n - \Gw  \right) \Rightarrow N(0, \upsilon^2)
 \end{equation}
 where 
 \begin{equation}\label{eq:varonesample}
  \begin{split}
  \upsilon^2 = 4 ( \nu - \mu)^t \Sigma (\nu-\mu) +  2 \tr &\left(\Sigma^2\right) + 2 \tr\left(\Sigma \Xi\right) \\
   &- 4 \sum_{k=1}^d \kappa_k^{1/2}r_k^t \Xi^{-1/2} \Sigma \Xi^{1/2} r_k.
 \end{split} \end{equation}
 Here, $\{(\kappa_k,r_k):\, k=1,\dotsc, d\}$ denotes the eigendecomposition of the symmetric matrix $\Xi^{1/2}\Sigma\Xi^{1/2}$ into orthonormal eigenpairs (consisting of eigenvalues and   eigenvectors).
\end{theorem}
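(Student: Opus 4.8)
The plan is to prove Theorem~\ref{T.WS.ASYMP.ONESAMPLE} by combining the classical CLT for the empirical mean and covariance of a Gaussian sample with the functional delta method (Theorem~\ref{thm:vdv:delta}), using the Fr\'echet differentiability of the map $(\mu,\Sigma)\mapsto \Gw(N(\mu,\Sigma),\IQ)$ established in Theorem~\ref{p:Phi:diff}. Concretely, write $\theta = (\mu,\Sigma) \in \IR^d \times S_+(\IR^d)$ and let $\Phi(\theta) := \wass_2^2(N(\mu,\Sigma),\IQ)$ be given by the explicit formula \eqref{eq:was:dist:Gauss} with $\IQ$ fixed. The estimator is $\Gwhat_n = \Phi(\hat\theta_n)$ with $\hat\theta_n = (\hat\mu_n,\hat\Sigma_n)$. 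First I would record the joint CLT $\sqrt{n}(\hat\theta_n - \theta) \Rightarrow G$, where $G = (G_\mu, G_\Sigma)$ is a centered Gaussian vector in $\IR^d \times S(\IR^d)$: the mean part $\sqrt n(\hat\mu_n-\mu)\Rightarrow N(0,\Sigma)$, and the (say, unbiased) sample covariance satisfies $\sqrt n(\hat\Sigma_n-\Sigma)\Rightarrow G_\Sigma$, a Gaussian matrix with the well-known Wishart-type covariance structure $\mathrm{Cov}(\langle A, G_\Sigma\rangle, \langle B, G_\Sigma\rangle) = 2\tr(\Sigma A \Sigma B)$ for symmetric $A,B$, with $G_\mu$ and $G_\Sigma$ independent by the standard Gaussian ancillarity argument. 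Whether one uses the MLE or the bias-corrected estimator for $\hat\Sigma_n$ is asymptotically irrelevant at the $\sqrt n$ scale.

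The next step is to apply the delta method: since $\Phi$ is (Fr\'echet, hence Hadamard) differentiable at $\theta$ with derivative $D\Phi_\theta$ — this is exactly the content of Theorem~\ref{p:Phi:diff} — we obtain $\sqrt{n}(\Gwhat_n - \Gw) \Rightarrow D\Phi_\theta(G)$, which is a linear functional of a Gaussian vector and therefore $N(0,\upsilon^2)$ with $\upsilon^2 = \mathrm{Var}(D\Phi_\theta(G))$. It remains to compute $D\Phi_\theta$ and then evaluate this variance. From \eqref{eq:was:dist:Gauss}, $\Phi(\mu,\Sigma) = \|\mu-\nu\|^2 + \tr\Sigma + \tr\Xi - 2\tr[(\Sigma^{1/2}\Xi\Sigma^{1/2})^{1/2}]$, so the derivative in $\mu$ is $h\mapsto 2\langle \mu-\nu, h\rangle$, contributing the term $4(\nu-\mu)^t\Sigma(\nu-\mu)$ to $\upsilon^2$. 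The derivative in $\Sigma$ has two pieces: the trivial $H \mapsto \tr H$, and the derivative of $\Sigma \mapsto -2\tr[(\Sigma^{1/2}\Xi\Sigma^{1/2})^{1/2}]$, which is the genuinely nontrivial part and is precisely what Theorem~\ref{p:Phi:diff} supplies via the perturbation analysis of operator square roots (Corollary~\ref{cor:Second derivative}). I expect the derivative of the last term to be expressible as $H \mapsto -\langle L(H), \,\cdot\,\rangle$ for a linear map $L$ built from the resolvent/Sylvester-equation solution associated with $\Sigma^{1/2}\Xi\Sigma^{1/2}$; a convenient way to organize the bookkeeping is to pass to the eigenbasis $\{(\kappa_k,r_k)\}$ of $\Xi^{1/2}\Sigma\Xi^{1/2}$ (equivalently the spectral data of $(\Sigma^{1/2}\Xi\Sigma^{1/2})^{1/2}$), which is why that eigendecomposition appears in the final formula.

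The main obstacle is the third term of \eqref{eq:varonesample}, namely computing $\mathrm{Var}$ of the $\Sigma$-part of the derivative and massaging it into the stated closed form $-4\sum_k \kappa_k^{1/2} r_k^t \Xi^{-1/2}\Sigma\Xi^{1/2} r_k$. This requires: (i) writing the $\Sigma$-derivative of $\tr[(\Sigma^{1/2}\Xi\Sigma^{1/2})^{1/2}]$ explicitly — differentiating a composition of a square root, a conjugation, and another square root, which produces a Sylvester-type operator $X\mapsto MX+XM$ to invert, handled spectrally; (ii) substituting the Gaussian matrix $G_\Sigma$ and using the covariance identity $\mathrm{Cov}(\langle A,G_\Sigma\rangle,\langle B,G_\Sigma\rangle)=2\tr(\Sigma A\Sigma B)$ together with the mean contribution $2\tr(\Sigma)$ from the $\tr H$ piece and its cross term; and (iii) carefully simplifying the resulting double sum over eigenvalues $\kappa_j,\kappa_k$ — one anticipates that all the apparent cross terms with denominators $(\kappa_j^{1/2}+\kappa_k^{1/2})$ telescope or collapse, leaving only the diagonal-type sum written in the theorem. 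This algebraic reduction, using the similarity between $\Sigma^{1/2}\Xi\Sigma^{1/2}$ and $\Xi^{1/2}\Sigma\Xi^{1/2}$ to move between the two natural eigenbases, is where the bulk of the computational work lies; everything else is a routine assembly of the CLT plus the delta method. Finally, I would note that $\Sigma,\Xi$ having full rank is used to guarantee all square roots and the inverse $\Xi^{-1/2}$ are well defined and that $\Phi$ is differentiable at $\theta$ in the sense required by Theorem~\ref{p:Phi:diff}.
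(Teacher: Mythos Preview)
Your overall architecture is exactly the paper's: joint CLT for $(\hat\mu_n,\hat\Sigma_n)$ (Lemma~\ref{lem:anderson}), Fr\'echet differentiability of $\Phi$ (Lemma~\ref{p:Phi:diff}), then the delta method (Theorem~\ref{thm:vdv:delta}) to conclude asymptotic normality, followed by a direct variance computation.

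The one place where you diverge from the paper is in how you plan to compute the $\Sigma$-derivative and hence the variance. You intend to differentiate $\Sigma\mapsto\tr[(\Sigma^{1/2}\Xi\Sigma^{1/2})^{1/2}]$ head-on, which forces you through the inner square root of $\Sigma$, produces the Sylvester-type structure and the anticipated cross terms with denominators $\kappa_j^{1/2}+\kappa_k^{1/2}$, and then requires a basis change from the eigenvectors of $\Sigma^{1/2}\Xi\Sigma^{1/2}$ to those of $\Xi^{1/2}\Sigma\Xi^{1/2}$. The paper sidesteps all of this by exploiting the symmetry of $\Phi$ in its two matrix arguments: since $\Phi(\mu,\nu,A,B)=\Phi(\nu,\mu,B,A)$, one may regard the \emph{estimated} pair $(\mu,\Sigma)$ as sitting in the \emph{second} slot $(\nu,B)$ of Lemma~\ref{p:Phi:diff}. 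The $B$-derivative there is the much cleaner expression
\[
G\ \longmapsto\ \tr G \;-\; \sum_{l=1}^d \kappa_l^{-1/2}\, r_l^{t}\,\Xi^{1/2} G\, \Xi^{1/2}\, r_l,
\]
already written in the eigenbasis $\{(\kappa_l,r_l)\}$ of $\Xi^{1/2}\Sigma\Xi^{1/2}$ (this is Corollary~\ref{p:Phi:diff-onesample}). No inner square root of the estimated matrix, no $P_i$-projections of $\Sigma$, no cross terms to collapse. The variance then comes out in three short steps (the $G'$-block \eqref{G'1}--\eqref{G'3} in Section~\ref{Sec:variance} with $A=\Xi$, $B=\Sigma$, $a=1$), using only the Gaussian-matrix moment identity of Lemma~\ref{lem:EH}. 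Your route would also arrive at \eqref{eq:varonesample}, but the symmetry trick is what makes the computation short and explains why only the $r_k$-basis, not the eigenvectors of $\Sigma$, survives in the final formula.
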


In many practical applications we may not have direct access to the parameters of the distribution $\Q = N(\nu,\Xi)$ and we merely have a sample from that distribution.
The generalization of the estimator from \eqref{e.estimator} for the two sample case is given as
\begin{equation}\label{e.estimator.2}
 \Gwhat_{n,m} (X_1,\dotsc, X_n, Y_1,\dotsc, Y_m) :=  \wass_2^2\left(N (\hat{\mu}_n, \hat{\Sigma}_n),N(\hat{\nu}_m, \hat{\Xi}_m) \right) \, .
\end{equation}

The following result is the two sample analogue to Theorem \ref{T.WS.ASYMP.ONESAMPLE}.
\begin{theorem}[Asymptotics for the empirical Wasserstein distance in two sample case, $\P \neq \Q$]\label{T.WS.ASYMP}
 Let $\IP\neq \IQ$ in $\mathcal{M}_1(\IR^d)$ be Gaussian, $\IP \sim N(\mu,\Sigma), \IQ \sim N(\nu,\Xi)$ with $\Sigma$ and $\Xi$ having full rank.
 Let $n \in \IN$ and $m=m(n) \in \IN$ such that $n/m(n) \to a/(1-a)$ as $n \to \infty$ for a certain $a \in (0,1)$.
 Consider the i.i.d.~samples $(X_1,\dotsc, X_n)$ and $(Y_1,\dotsc,Y_m)$ with joint law $\IP$ and $\IQ$, respectively.
 Then as $n \rightarrow \infty,$
 \begin{equation}
  \label{eq:twosample.m.n}
    \sqrt{\frac{mn}{m+n}}\left(  \Gwhat_{n,m} -  \Gw  \right) \Rightarrow N(0,\varpi^2),
 \end{equation}
 where
 \begin{align}
 \label{eq:vartwosample.m.n}
    \varpi^2 &= 4( \nu - \mu)^t ((1-a)\Sigma+a\Xi) (\nu-\mu)+   2 \tr ((1-a)\Sigma^2+a\Xi^2)  \\
    \nonumber
    &   \quad \quad \quad + 2a \tr(\Sigma \Xi) - 4  \sum_{k=1}^d \kappa_k^{1/2}q_k^t ((1-a)\Sigma+a\Sigma^{-1/2}\Xi\Sigma^{1/2}) q_k \, \\
     \nonumber
    & \quad \quad \quad \quad
    - 2 (1-a)\sum_{k,l=1}^d \kappa_l^{1/2} \kappa_k^{1/2} \sum_{i=1}^d \sum_{\stackrel{j=1}{j \neq i, \lambda_i =\lambda_j}}^d q_l^tp_i p_i^tq_k q_l^t p_j p_j^t q_k  .
  \end{align}
  Here, $\{(\kappa_l,q_l):\, l=1,\dotsc, d\}$ denotes the eigendecomposition of the symmetric matrix $\Sigma^{1/2}\Xi\Sigma^{1/2}$ into orthonormal eigenpairs (consisting of eigenvalues and   eigenvectors)  and  $\{(\lambda_l,p_l):\, l=1,\dotsc, d\}$ is the eigendecomposition of $\Sigma$.
\end{theorem}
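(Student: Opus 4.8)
The plan is to write $\Gwhat_{n,m}$ as a smooth function of the empirical means and covariances of the two samples and to apply the functional Delta method (Theorem~\ref{thm:vdv:delta}), with the Fr\'echet derivative supplied by Theorem~\ref{p:Phi:diff}. Set
\[
 \Phi(\mu,\Sigma,\nu,\Xi) := \|\mu-\nu\|^2 + \tr(\Sigma) + \tr(\Xi) - 2\tr\bigl[(\Sigma^{1/2}\Xi\Sigma^{1/2})^{1/2}\bigr]
\]
on $\IR^d\times S_+(\IR^d)\times\IR^d\times S_+(\IR^d)$, so that $\Gw = \Phi(\mu,\Sigma,\nu,\Xi)$ and $\Gwhat_{n,m} = \Phi(\hat\mu_n,\hat\Sigma_n,\hat\nu_m,\hat\Xi_m)$. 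Since $\Sigma$ and $\Xi$ have full rank, a neighbourhood of $(\mu,\Sigma,\nu,\Xi)$ inside $\IR^d\times S(\IR^d)\times\IR^d\times S(\IR^d)$ consists of admissible arguments (and $\hat\Sigma_n,\hat\Xi_m$ lie there eventually), and on it $\Phi$ is Fr\'echet differentiable by Theorem~\ref{p:Phi:diff}. The first derivative $D\Phi(\mu,\Sigma,\nu,\Xi)$ does not vanish when $\P\neq\Q$: if $\mu\neq\nu$ this is immediate from $\partial_\mu\Phi=2(\mu-\nu)$, and if $\mu=\nu$ but $\Sigma\neq\Xi$ it follows because the derivative in the $\Sigma$-direction is, up to a constant, $I-T$ with $T=\Sigma^{-1/2}(\Sigma^{1/2}\Xi\Sigma^{1/2})^{1/2}\Sigma^{-1/2}$ the (symmetric) Jacobian of the Gaussian optimal transport map, and $T=I$ forces $\Sigma=\Xi$. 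Thus, in contrast to the case $\P=\Q$ of Theorem~\ref{T.WS.ASYMP:P=Q}, the first-order term governs the limit.

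First I would assemble the joint CLT for the plug-in estimators. Applying the multivariate CLT to the i.i.d.\ vectors $(X_i,X_iX_i^t)$ and the continuous mapping to $(\hat\mu_n,\hat\Sigma_n)$ shows that $\sqrt{n}(\hat\mu_n-\mu)$ and $\sqrt{n}(\hat\Sigma_n-\Sigma)$ are jointly asymptotically normal and, for Gaussian data, asymptotically (indeed exactly) independent, the limiting covariance being $\Sigma$ for the mean and having entries $\Sigma_{ik}\Sigma_{jl}+\Sigma_{il}\Sigma_{jk}$ for the covariance; the analogous statement holds for the $Y$-sample with $(\nu,\Xi)$. Write $k_n:=mn/(m+n)$; the hypothesis $n/m\to a/(1-a)$ is equivalent to $n/(m+n)\to a$, whence $k_n/n=m/(m+n)\to 1-a$ and $k_n/m=n/(m+n)\to a$. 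Combining this with the independence of the two samples, the vector
\[
 \sqrt{k_n}\,\bigl((\hat\mu_n,\hat\Sigma_n,\hat\nu_m,\hat\Xi_m)-(\mu,\Sigma,\nu,\Xi)\bigr)
\]
converges weakly to a centred Gaussian $Z$ with independent $(\mu,\Sigma)$- and $(\nu,\Xi)$-blocks, the first having the above limiting covariance multiplied by $1-a$ and the second by $a$. The Delta method then yields $\sqrt{k_n}(\Gwhat_{n,m}-\Gw)\Rightarrow D\Phi(\mu,\Sigma,\nu,\Xi)[Z]$, a linear image of a Gaussian and hence $N(0,\varpi^2)$ with $\varpi^2=\mathrm{Var}\bigl(D\Phi(\mu,\Sigma,\nu,\Xi)[Z]\bigr)$.

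It remains to evaluate $\varpi^2$ and match \eqref{eq:vartwosample.m.n}. The mean directions are elementary: from $\partial_\mu\Phi=2(\mu-\nu)$, $\partial_\nu\Phi=2(\nu-\mu)$ and the mean covariances $(1-a)\Sigma$, $a\Xi$ one obtains $4(\nu-\mu)^t((1-a)\Sigma+a\Xi)(\nu-\mu)$. The covariance directions require the directional derivatives, on the symmetric matrices, of $\Sigma\mapsto\tr(\Sigma)-2\tr[(\Sigma^{1/2}\Xi\Sigma^{1/2})^{1/2}]$ and of $\Xi\mapsto-2\tr[(\Sigma^{1/2}\Xi\Sigma^{1/2})^{1/2}]$, which are exactly what Theorem~\ref{p:Phi:diff} (resting on the perturbation result Corollary~\ref{cor:Second derivative}) delivers in terms of the orthonormal eigendecomposition $\{(\kappa_l,q_l)\}$ of $\Sigma^{1/2}\Xi\Sigma^{1/2}$. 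Inserting the Gaussian covariance of $Z$, diagonalising the resulting quadratic forms in the $q_l$-basis, and carrying the $\Xi$-direction contribution into the eigenbasis $\{(\lambda_l,p_l)\}$ of $\Sigma$ (that derivative factors through $\Xi\mapsto\Sigma^{1/2}\Xi\Sigma^{1/2}$, and $\hat\Sigma_n$ has covariance diagonal in the $p_l$-basis) produces the remaining trace terms; the final double sum restricted to index pairs with $\lambda_i=\lambda_j$, $i\neq j$, appears precisely because the derivative of the matrix square root, expanded in the $p_i$-basis, retains cross terms on coincident eigenvalues of $\Sigma$ that cannot be reabsorbed into a single trace.

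The main obstacle is this last computation: correctly bookkeeping the symmetry constraint on the perturbation directions, keeping track of which argument each partial derivative acts on, and handling the degenerate-eigenvalue contribution, all while simplifying a long quadratic form — no individual step is deep, but the algebra is lengthy and error-prone. A useful consistency check is that, in the boundary regime $a\downarrow 0$ (that is, $\Q$ effectively known and only the $X$-sample fluctuating), the formula should collapse to the one-sample variance \eqref{eq:varonesample} of Theorem~\ref{T.WS.ASYMP.ONESAMPLE} after re-expressing the $\Sigma$-direction contribution through the eigenbasis of $\Xi^{1/2}\Sigma\Xi^{1/2}$.
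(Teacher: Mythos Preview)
Your proposal is correct and follows essentially the same route as the paper: apply the Delta method (Theorem~\ref{thm:vdv:delta}) to $\Phi$ at $(\mu,\Sigma,\nu,\Xi)$, feeding in the joint CLT for $(\hat\mu_n,\hat\Sigma_n,\hat\nu_m,\hat\Xi_m)$ (the paper's Lemma~\ref{lem:anderson}) with the scaling $r_n=\sqrt{mn/(m+n)}$, conclude that the limit is a centered Gaussian because $D\Phi$ is linear, and then compute the variance $\varpi^2$ term by term. Your explicit verification that $D\Phi$ does not vanish when $\P\neq\Q$ is a nice addition (the paper only records this in the commutative case, Remark~\ref{remark:derivcomm}), and your $a\downarrow 0$ consistency check is sound; otherwise the two arguments coincide, with the paper simply carrying out in full the lengthy variance algebra you sketch.
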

  
\begin{remark}[Distinct eigenvalues of $\Sigma$]
We note that the last term of (\ref{eq:vartwosample.m.n}) disappears if all eigenvalues $\lambda_l$, $l=1, \dots, d$ of $\Sigma$ are distinct. 
\end{remark}   

\begin{remark}[Commutative case $\Sigma \Xi =\Xi \Sigma$]
In this case, we can choose the eigenbasis of $\Sigma$ and $\Xi$ to be the same, which is thus also an eigenbasis of $\Sigma^{1/2}\Xi \Sigma^{1/2}$ implying that $p_i^t q_l = \delta_{il}.$
Denoting by $\lambda_k'$ the eigenvalues of $\Xi$ we have that $\lambda_k \lambda_k' = \kappa_k.$
Using this, we see that the last term of (\ref{eq:vartwosample.m.n}) disappears.
For $a=1/2$ the last term of  (\ref{eq:varonesample})  and the second and third last term of (\ref{eq:vartwosample.m.n}) simplify to $-4 \tr( (\Sigma^{1/2} \Xi\Sigma^{1/2})^{1/2} \Sigma)$ and $-4 \tr( (\Sigma^{1/2} \Xi\Sigma^{1/2})^{1/2} (\Sigma + \Xi))$, respectively. 
Thus, in this case for our two previous theorems the following simplifications apply
 \begin{eqnarray*}
  \upsilon^2 &=& 4 ( \nu - \mu)^t \Sigma (\nu-\mu) +  2\tr\left(\Sigma (\Sigma^{1/2} -\Xi^{1/2})^2\right), \\
    \varpi^2 &=& 4( \nu - \mu)^t (\Sigma+\Xi) (\nu-\mu)+ 2\tr\left(\Sigma (\Sigma^{1/2} -\Xi^{1/2})^2\right) \\
    & & \phantom{AAAAAAAAAAAAAAAAA}+ 2\tr\left(\Xi (\Xi^{1/2} -\Sigma^{1/2})^2\right).
 \end{eqnarray*}
\end{remark}
The result is restricted to the case of covariance matrices of full rank.
We comment on that restriction in Remark \ref{r.tr1}.
 
Note in the previous result that the variance $\varpi$ is zero if $\IP=\IQ$, i.e.~$\mu=\nu$ and $\Sigma=\Xi,$ see also Remark \ref{remark:derivcomm}.
In this case a second order expansion provides a valid limit law.
Namely, the following theorem holds.
 \begin{theorem}[Asymptotics for the empirical Wasserstein distance, $\P = \Q$]\label{T.WS.ASYMP:P=Q}
 Let $\IP \in \mathcal{M}_1(\IR^d)$ be Gaussian: $\IP \sim N(\mu,\Sigma)$ with $\Sigma$ having full rank.
 Consider the i.i.d.~samples $(X_1^{(1)},\dotsc, X_n^{(1)})$ and $(X_1^{(2)},\dotsc, X_n^{(2)})$ with joint law $\IP$ and the Gaussian Wasserstein estimators $\Gwhat_n$ from \eqref{e.estimator} and $\Gwhat_{n,n}$ from \eqref{e.estimator.2}.
 Then as $n \rightarrow \infty,$
 \begin{equation}\label{e.P=Q.onesample}
   n \Gwhat_{n} \Rightarrow Z_1 \, ,
 \end{equation}
 and
 \begin{equation}
 \label{eq:P=Q}
 n \Gwhat_{n,n} \Rightarrow Z_2 \, ,
 \end{equation}
 where $Z_1$ and $Z_2$ are random variables, characterized in \eqref{e.tr2}.
 \end{theorem}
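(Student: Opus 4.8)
\textbf{Proof proposal for Theorem \ref{T.WS.ASYMP:P=Q}.}

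The plan is to deduce both limits from a second-order delta method, using the fact that when $\P=\Q$ the first Fr\'echet derivative of the functional $\Gw$ vanishes (cf.\ Remark~\ref{remark:derivcomm}), so that the leading contribution comes from the quadratic (second-order) Fr\'echet derivative established in Theorem~\ref{p:Phi:diff} (and Corollary~\ref{cor:Second derivative}). First I would write $\Gwhat_n = \Phi(\hat\mu_n,\hat\Sigma_n)$, where $\Phi$ is the map $(\mu,\Sigma)\mapsto \wass_2^2(N(\mu,\Sigma),N(\nu,\Xi))$ with here $\nu=\mu$, $\Xi=\Sigma$ fixed at the true parameters. The standard CLT for the empirical mean and covariance gives $\sqrt{n}\,(\hat\mu_n-\mu,\ \hat\Sigma_n-\Sigma)\Rightarrow (G_\mu,G_\Sigma)$, a centered Gaussian vector in $\R^d\times S(\R^d)$ whose covariance structure is explicit for a Gaussian population (the mean and covariance estimators are asymptotically independent, $G_\mu\sim N(0,\Sigma)$, and $G_\Sigma$ has the usual Wishart-type fourth-moment covariance). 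Since $\Phi$ and $D\Phi$ both vanish at $(\mu,\Sigma)$, the first-order term in the Taylor expansion disappears and
\[
 n\,\Gwhat_n \;=\; n\,\Phi(\hat\mu_n,\hat\Sigma_n) \;=\; \tfrac12\, D^2\Phi\big[(\sqrt n(\hat\mu_n-\mu),\sqrt n(\hat\Sigma_n-\Sigma))^{\otimes 2}\big] + o_\P(1),
\]
by the continuity of the bilinear form $D^2\Phi$ (from Theorem~\ref{p:Phi:diff}) together with the usual argument controlling the remainder of a twice-differentiable map composed with a $\sqrt n$-consistent estimator. By the continuous mapping theorem this converges weakly to $Z_1 := \tfrac12\,D^2\Phi[(G_\mu,G_\Sigma)^{\otimes 2}]$, which is the quantity to be recorded in \eqref{e.tr2}; one then spells out $D^2\Phi$ at $(\mu,\Sigma)$ using the explicit second-derivative formula, so that $Z_1$ is a concrete quadratic form in the Gaussian variables $(G_\mu,G_\Sigma)$ — i.e.\ a linear combination of independent $\chi^2$ terms plus a quadratic form in $G_\Sigma$.

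For the two-sample statement \eqref{eq:P=Q} the argument is the same with $\Phi$ now a function of all four arguments $(\mu,\Sigma,\nu,\Xi)$ evaluated at the common true value, and with the joint CLT $\sqrt n\,(\hat\mu_n-\mu,\hat\Sigma_n-\Sigma,\hat\nu_n-\mu,\hat\Xi_n-\Sigma)\Rightarrow (G_\mu,G_\Sigma,G'_\mu,G'_\Sigma)$ where the two blocks are independent copies (independent samples, equal sample sizes $m=n$). Again $\Phi$ and its first derivative vanish on the diagonal $\{\mu=\nu,\ \Sigma=\Xi\}$, and $n\,\Gwhat_{n,n}$ converges to $Z_2:=\tfrac12\,D^2\Phi[(G_\mu,G_\Sigma,G'_\mu,G'_\Sigma)^{\otimes 2}]$; substituting the known $D^2\Phi$ gives the explicit characterization of $Z_2$ in \eqref{e.tr2}. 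I would note that $Z_2$ essentially doubles the covariance-part fluctuations relative to $Z_1$ (two independent covariance estimators enter the relevant differences), while the mean-part contributions add independently as well.

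The main obstacle is the justification of the remainder estimate in the second-order expansion, i.e.\ showing that $n\,[\Phi(\hat\mu_n,\hat\Sigma_n) - \tfrac12 D^2\Phi[\Delta_n^{\otimes 2}]] = o_\P(1)$ with $\Delta_n=(\sqrt n(\hat\mu_n-\mu),\sqrt n(\hat\Sigma_n-\Sigma))$. This requires more than plain Fr\'echet differentiability of second order at the single point $(\mu,\Sigma)$: one needs a quantitative bound on $D^2\Phi$ in a neighbourhood (local Lipschitz/uniform continuity of the second derivative), which is exactly what the perturbation analysis of Corollary~\ref{cor:Second derivative} is designed to supply, since the matrix square root $A\mapsto A^{1/2}$ is only finitely differentiable and care is needed to keep $\hat\Sigma_n$ bounded away from singular — this is where the full-rank hypothesis on $\Sigma$ and the event $\{\hat\Sigma_n \in S_+(\R^d)\}$ (which has probability tending to one) are used. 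Once this second-order delta method (Theorem~\ref{thm:vdv:delta} in its quadratic form, or a direct Taylor argument) is in place, both \eqref{e.P=Q.onesample} and \eqref{eq:P=Q} follow by the continuous mapping theorem, and the remaining work is the purely algebraic task of evaluating $D^2\Phi$ at the diagonal to obtain the closed forms \eqref{e.tr2}.
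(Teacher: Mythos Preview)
Your proposal is correct and follows essentially the same route as the paper: you invoke the CLT for $(\hat\mu_n,\hat\Sigma_n)$ (Lemma~\ref{lem:anderson}), observe that $\Phi$ and its first Fr\'echet derivative vanish at the diagonal (Remark~\ref{remark:derivcomm}), and then apply the second-order delta method (the second part of Theorem~\ref{thm:vdv:delta}) using the twice Fr\'echet differentiability from Theorem~\ref{p:Phi:diff:2} (note: your citation of ``Theorem~\ref{p:Phi:diff}'' for the second derivative should point to Theorem~\ref{p:Phi:diff:2}). Your discussion of the remainder control and the role of the full-rank assumption is more explicit than the paper's, which simply appeals to Theorem~\ref{thm:vdv:delta} as a black box; also, you correctly retain the factor $\tfrac12$ from the second-order delta method, which is implicit in the paper's reference to Theorem~\ref{thm:vdv:delta} even though it is not displayed in~\eqref{e.tr2}.
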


\begin{remark}[The limiting distribution for $\P = \Q$]
 An explicit description of the limit in \eqref{e.tr2} is difficult in general, a simplification can be given in the one dimensional case, see \eqref{e.tr3}.
  
 Analogously to Theorem \ref{T.WS.ASYMP} it is also possible to obtain the asymptotics in the case that the sample sizes for $\IP$ and $\IQ$ are not the same.
 In the regime $n/m \to a/(1-a)$ for a certain $a\in (0,1)$ as $n \to \infty$ the convergence holds for $\left((nm)/(n+m) \right) \Gwhat_{n,m}$.
\end{remark}

\begin{remark}[Generalization to elliptically symmetric distributions]\label{r.gelbrich}
Gelbrich \cite{gelbrich1990formula} showed that formula \eqref{eq:was:dist:Gauss} holds for any two elements $\P, \Q \in \mcM_1(\R^d)$ which are translations of distributions whose covariance matrices are related in a certain way. He also showed that this condition is fulfilled as long as they are in the same class of elliptically symmetric distributions. The class of Gaussian distributions is such a class. 

More generally, denote by $S^0_+(\R^d)$ the non-negative definite, symmetric matrices and by  
$\text{rk}_A$ the  rank of any $A \in S^0_+(\R^d).$ Let $f: [0,\infty) \rightarrow [0,\infty)$ be a measurable function
 that is not almost everywhere zero and that satisfies
\begin{equation}
\label{ellipclassf}
\int_{-\infty}^{\infty} |t|^l f(t^2) dt < \infty, \quad l=d-1,d,d+1,
\end{equation}
and set $c_A=\int f( \langle x, A x \rangle) dx.$
Then, one can consider classes of the form 
\begin{align}
\label{ellipclass}
\mcM_1^f(\R^d)&:=\{ \IP \in \mcM_1(\text{Im}_A) :  A \in S^0_+(\R^d) \text{ with rk}_A=d, \IP  \text{ has density }\\
\nonumber
&  \quad \quad  f_{A,v}(x) = c_A f( \langle x-v, A (x-v) \rangle), x \in \R^d,  v \in \R^d\},
\end{align}
see Theorem 2.4 of \cite{gelbrich1990formula} (there stated also for matrices $A$ that do not have full rank, which is not considered here).  

As can be seen from \eqref{ellipclassf} and \eqref{ellipclass}  by setting $f=1_{[0,1]}$  
another prominent example for elliptically symmetric distributions is that of uniformly distributed probability measures on ellipsoids, i.e.~on sets of the form $U_{A,v}:=\{x \in \R^d: \, \langle x-v, A (x-v) \rangle \leq 1\}$ for $A \in S^0_+(\R^d)$ and $v \in \R^d.$ Furthermore, we obtain the multivariate t-distributions (with $\nu>0$ degrees of freedom) by setting $f(t)= (1+\frac{t}{\nu})^{-(\nu+d)/2}.$ These play a particular role for copulae models, see \cite{Demarta}.

As the largest part of the proofs of Theorems \ref{T.WS.ASYMP.ONESAMPLE}, \ref{T.WS.ASYMP} and \ref{T.WS.ASYMP:P=Q} relies only on the specific form of formula \eqref{eq:was:dist:Gauss}, the results of these theorems immediately transfer to other classes of elliptically symmetric distributions. What still needs to be verified in the various cases is that a central limit theorem holds for the empirical mean and for the covariance matrices, see our Lemma \ref{lem:anderson} in the Gaussian case.
For example, this requires $\nu \geq 2$ for the class of multivariate $t$-distributions to guarantee the existence of second moments.
The specific form of the analogous limits in Theorems \ref{T.WS.ASYMP.ONESAMPLE}, \ref{T.WS.ASYMP} and \ref{T.WS.ASYMP:P=Q} will depend on the specific form of the limit in the appropriate central limit theorem and has to be computed from case to case.
\end{remark}

\subsection{Fr\'echet differentiability of the Gaussian Wasserstein distance}\label{Sec:Frechetdiff}

The concept of differentiation on Banach spaces will be an important tool for the proof of the results in the previous section.
We give a comprehensive reminder of some classical results for Fr\'echet derivatives in Section \ref{s.Functional.deriv}.
Moreover some more advanced results about a Taylor expansion of a functional of an operator may be found in Section \ref{s.2nd.order}.

Now we consider the 2-Wasserstein distance of Gaussian distributions as a functional of their means
and covariance matrices (see (\ref{eq:was:dist:Gauss})).
In the following we show its \Frechet{} differentiability and explicitly derive its \Frechet{} derivative.
To this end, consider $A,B \in S_{+}(\Rdim)\subset L(\Rdim, \Rdim)\simeq \R^{d \times d}$ (symmetric, positive definite matrices).
We use the eigenvalue decomposition for $A$ and $A^{1/2}BA^{1/2}$ of the form 
\begin{equation}\label{eq:lambda,kappa}\begin{split}
 A & = \sum_{i=1}^{\dimension} \lambda_i P_i, \\
 A^{1/2}BA^{1/2} & = \sum_{i=1}^{\dimension} \kappa_iQ_i ,
\end{split}\end{equation}
where $\lambda_i, \kappa_i >0$, $P_iP_{j} = \delta_{ij}P_i, Q_iQ_{j}=\delta_{ij}Q_i , 1\leq i\leq j \leq d.$
Our decomposition implies that all projections $P_i$, $Q_j$ are onto one dimensional spaces such that we can write $Q_i = q_iq_i^t$ and $P_i =p_i p_i^t$ for vectors $q_i$ and $p_i$ in $\Rdim, i=1,\dots,d.$

\begin{lemma}[Differentiability of the $2$-Wasserstein distance $\wass_2^2$ of Gaussian distributions]\label{p:Phi:diff}
Let $\Phi: \R^{2d} \times S_{+}(\Rdim)^2 \to \R$ be given by
\begin{equation}
\label{Phi}
 (\mu,\nu,A,B) \mapsto \|\mu -\nu\|^2 + \tr(A) + \tr(B) - 2 \tr\left( (A^{1/2}BA^{1/2})^{1/2} \right). 
 \end{equation}
This mapping is \Frechet{} differentiable and its derivative at $(\mu,\nu,A,B) \in \R^{2d} \times S_{+}(\Rdim)^2$ is a mapping in $L(\R^{2d} \times \R^{2(d \times d)}, \R)$ given by
\begin{equation}
 \begin{split}
  D_{(\mu,\nu,A,B)}\Phi &  [(g,g', G,G')]   =  2 (\mu-\nu)\cdot (g-g')  + \tr G + \tr G'  \\
 \label{eq:D} &  - \sum_{l=1}^\dimension \kappa_l^{1/2} \sum_{i=1}^\dimension  \lambda_i^{-1} q_l^t P_i G P_i q_l - \sum_{l=1}^\dimension \kappa_l^{-1/2} q_l^t \sqrt{A} G' \sqrt{A} q_l  \\
  & \quad -  \sum_{l=1}^\dimension \kappa_l^{1/2} \sum_{\substack{ i, m=1\\ \lambda_i \neq \lambda_m}}^\dimension \left( \lambda_i  \lambda_m\right)^{-1/2}  q_l^t P_i G P_m  q_l
 \end{split}
\end{equation}
for all $g,g' \in \R^{d}$, $G, G' \in \R^{d \times d}$ and with $\kappa, \lambda, P, Q$ as in \eqref{eq:lambda,kappa}.
\end{lemma}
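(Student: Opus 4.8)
The plan is to split $\Phi = \Phi_1 + \Phi_2 + \Phi_3$, where $\Phi_1(\mu,\nu) = \|\mu-\nu\|^2$, $\Phi_2(A,B) = \tr A + \tr B$, and $\Phi_3(A,B) = -2\,\tr((A^{1/2}BA^{1/2})^{1/2})$, and to differentiate each summand separately; the derivative is first taken on the open set $\R^{2d}\times S_{+}(\R^d)^2$ inside $\R^{2d}$ times the space of symmetric $d\times d$ matrices squared, and afterwards extended to all of $\R^{2d}\times\R^{2(d\times d)}$ by the (manifestly linear) right-hand side of \eqref{eq:D}. The map $\Phi_1$ is a bounded quadratic form, hence $C^\infty$, with $D_{(\mu,\nu)}\Phi_1[(g,g')] = 2\langle\mu-\nu,\,g-g'\rangle$; the map $\Phi_2$ is bounded linear, hence $C^\infty$, with $D_{(A,B)}\Phi_2[(G,G')] = \tr G + \tr G'$. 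All the content is in $\Phi_3$.

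For $\Phi_3$ I would apply the chain rule to the factorisation $\Phi_3 = -2\,\tr\circ\,\mathrm{sq}\circ\,\mathrm{cj}$, where $\mathrm{sq}(M)=M^{1/2}$ is the matrix square root on $S_{+}(\R^d)$ and $\mathrm{cj}(A,B)=A^{1/2}BA^{1/2}$; in turn $\mathrm{cj} = \mathrm{c}\circ(\mathrm{sq}\times\mathrm{id})$ with the conjugation $\mathrm{c}(C,B)=CBC$. Now $\tr$ is bounded linear and $\mathrm{c}$ is bounded bilinear between finite-dimensional spaces, hence both are $C^\infty$, with $D_{(C,B)}\mathrm{c}[(\Delta,G')] = \Delta BC + CG'C + CB\Delta$; and the derivative of $M\mapsto\tr(M^{1/2})$ at $M$ is $H\mapsto \tfrac12\tr(M^{-1/2}H)$ (from the spectral calculus: $\tr(D_M\mathrm{sq}[H]) = \sum_{k,l}(\kappa_k^{1/2}+\kappa_l^{1/2})^{-1}\tr(Q_kHQ_l) = \sum_k(2\kappa_k^{1/2})^{-1}\tr(Q_kH)$, using $Q_kQ_l=\delta_{kl}Q_k$). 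The one substantive ingredient is the Fr\'echet differentiability of $\mathrm{sq}$ on $S_{+}(\R^d)$, which I would take from the operator-perturbation results of Section \ref{s.2nd.order} (in particular Corollary \ref{cor:Second derivative}); it is also classical, via the Riesz--Dunford representation $A^{1/2} = \frac{1}{2\pi i}\oint_{\gamma} z^{1/2}(z\,\mathrm{Id}-A)^{-1}\,dz$ over a contour $\gamma\subset\IC$ encircling $\mathrm{spec}(A)\subset(0,\infty)$, which is analytic in $A$. In either case one obtains the Daleckii--Krein formula $D_A\mathrm{sq}[G] = \sum_{i,j}(\lambda_i^{1/2}+\lambda_j^{1/2})^{-1}P_iGP_j$ (equivalently, $X:=D_A\mathrm{sq}[G]$ is the unique symmetric solution of $XA^{1/2}+A^{1/2}X=G$).

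Assembling, and writing $M:=A^{1/2}BA^{1/2}$ and $X:=D_A\mathrm{sq}[G]$, the chain rule gives $D_{(A,B)}\Phi_3[(G,G')] = -\tr(M^{-1/2}(XBA^{1/2} + A^{1/2}G'A^{1/2} + A^{1/2}BX))$, which I would then simplify as follows. The first and third terms inside agree under the trace: transposing the matrix inside the trace and using cyclicity with the symmetry of $M^{-1/2},X,A^{1/2},B$ gives $\tr(M^{-1/2}XBA^{1/2}) = \tr(M^{-1/2}A^{1/2}BX)$. Using $BA^{1/2} = A^{-1/2}M$ and cyclicity once more, this common term equals $\tr(M^{1/2}XA^{-1/2})$. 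Substituting $M^{1/2}=\sum_l\kappa_l^{1/2}q_lq_l^t$, $A^{-1/2}=\sum_i\lambda_i^{-1/2}P_i$ and the formula for $X$, and then symmetrising the resulting scalar coefficient — legitimate because $G$ is symmetric, so $q_l^tP_iGP_mq_l = q_l^tP_mGP_iq_l$, together with $\tfrac12(\lambda_i^{1/2}+\lambda_m^{1/2})^{-1}(\lambda_i^{-1/2}+\lambda_m^{-1/2}) = (\lambda_i\lambda_m)^{-1/2}$ — yields the $G$-contribution $-\sum_l\kappa_l^{1/2}\,q_l^tA^{-1/2}GA^{-1/2}q_l = -\sum_l\kappa_l^{1/2}\sum_{i,m}(\lambda_i\lambda_m)^{-1/2}q_l^tP_iGP_mq_l$. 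Splitting the double sum into the diagonal-in-the-eigenspace part ($\lambda_i=\lambda_m$, i.e.\ $i=m$ when the eigenvalues of $A$ are simple, or the grouped eigenprojection of a repeated eigenvalue) and the part with $\lambda_i\neq\lambda_m$ reproduces the first and the last sum of \eqref{eq:D}. Finally the $G'$-contribution is $-\tr(M^{-1/2}A^{1/2}G'A^{1/2}) = -\sum_l\kappa_l^{-1/2}q_l^t\sqrt{A}\,G'\sqrt{A}\,q_l$. Adding back $D\Phi_1$ and $D\Phi_2$ gives \eqref{eq:D}, and the remainder (inherited from the $o(\cdot)$ remainders of the three factors) shows this is indeed the Fr\'echet derivative.

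I expect two points to need care. First, one must know that $\mathrm{sq}$ is genuinely Fr\'echet — not merely Gateaux — differentiable on $S_{+}(\R^d)$, uniformly over configurations of (possibly coinciding) eigenvalues of $A$, so that the chain-rule remainder is truly $o(\|(G,G')\|)$; this is precisely what the perturbation estimates of Section \ref{s.2nd.order} deliver, and it is the real obstacle. Second, the bookkeeping that collapses the Daleckii--Krein double sum into the compact shape of \eqref{eq:D} — the symmetrisation identity for the coefficient, and the correct grouping of the eigenprojections of $A$ when its eigenvalues are not simple — is fiddly but essentially routine.
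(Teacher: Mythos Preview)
Your proof is correct and follows the paper's overall strategy: split off the mean and linear-trace pieces, then differentiate $-2\tr\bigl((A^{1/2}BA^{1/2})^{1/2}\bigr)$ by the chain rule, invoking the Fr\'echet derivative of the matrix square root from Corollary~\ref{cor:Second derivative}. The only real difference is in the order of simplification. The paper first computes the full matrix-valued derivative of $(A^{1/2}BA^{1/2})^{1/2}$---carrying both eigendecompositions $(\lambda_i,P_i)$ and $(\kappa_l,Q_l)$ throughout---and only then applies the trace, using $\tr(Q_l\,\cdot\,Q_k)=0$ for $l\neq k$ to kill the cross terms. You instead push the trace through at the outset, so the outer square-root step collapses to the scalar formula $H\mapsto\tfrac12\tr(M^{-1/2}H)$; cyclicity and the symmetrisation identity (which does use $G=G^t$) then lead directly to the compact form $-\sum_l\kappa_l^{1/2}q_l^tA^{-1/2}GA^{-1/2}q_l$, after which you split by eigenspaces of $A$ to recover~\eqref{eq:D}. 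Your route is shorter and produces a tidier closed form; the paper's is more mechanical but never needs the symmetry of $G$ along the way and arrives at~\eqref{eq:D} for arbitrary $G\in\R^{d\times d}$ without a separate extension step.
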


\begin{remark}
 Note that the last result is stated in finite dimensional spaces.
 Obviously in this case \Frechet{} differentiability coincides with usual differentiability. 
 Nonetheless, we prefer to use the abstract setup for simpler notation, obvious extensions to the infinite-dimensional case and because it is consistent with the cited references.
\end{remark}

Recall that $\Phi$ is a symmetric function in the entries $\mu$ and $\nu$ and likewise in $A$ and $B$.
If we switch the notation in the previous theorem and then consider $g'$ and $G'$ equal to zero we obtain as an immediate consequence.
\begin{corollary}
\label{p:Phi:diff-onesample}
Let $\Phi^{(\nu,B)}: \R^{d} \times S_{+}(\Rdim) \to \R$ be given by $\Phi$ from (\ref{Phi}) as a function of $\mu$ and $A$ for fixed $\nu \in \R^{d}$ and $B \in S_{+}(\Rdim).$
Then $\Phi^{(\nu,B)}$ is \Frechet{} differentiable and its derivative at any point $(\mu,A) \in \R^{d} \times S_{+}(\Rdim)$ is an element of $L(\R^{d} \times \R^{d \times d}, \R)$ given by
\begin{align} \label{eq:D:onesample}
 D_{(\mu,A)}\Phi^{(\nu,B)} [(g, G)]   = & 2 (\mu-\nu)\cdot g  + \tr G - \sum_{l=1}^d \kappa_l^{-1/2}r_l^t\sqrt{B}G \sqrt{B} r_l \, ,
\end{align}
for all $g \in \R^{d}$, $G \in \R^{d \times d}$ and $\{(\kappa_l,r_l),l=1,\dotsc,d\}$ the eigendecomposition of $B^{1/2}AB^{1/2}$ as in \eqref{eq:lambda,kappa}.
\end{corollary}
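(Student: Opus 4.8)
The plan is to obtain Corollary~\ref{p:Phi:diff-onesample} as a direct consequence of Lemma~\ref{p:Phi:diff}, using only the symmetry of $\Phi$ together with the chain rule for Fr\'echet derivatives recalled in Section~\ref{s.Functional.deriv}. No new analysis is required; the one point that needs attention is that, after exploiting the symmetry, the eigendecomposition appearing in the derivative is that of $B^{1/2}AB^{1/2}$, not of $A$ or of $A^{1/2}BA^{1/2}$.

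First I would record that, since $\Phi$ in \eqref{Phi} is symmetric under the simultaneous exchange of the pairs $(\mu,A)$ and $(\nu,B)$, one has $\Phi^{(\nu,B)}(\mu,A)=\Phi(\mu,\nu,A,B)=\Phi(\nu,\mu,B,A)$. Introduce the affine embedding $\iota\colon\R^d\times S_+(\Rdim)\to\R^{2d}\times S_+(\Rdim)^2$ defined by $\iota(\mu,A):=(\nu,\mu,B,A)$; it is continuous and affine, with constant derivative $D\iota[(g,G)]=(0,g,0,G)$. Then $\Phi^{(\nu,B)}=\Phi\circ\iota$, and since $\Phi$ is Fr\'echet differentiable by Lemma~\ref{p:Phi:diff}, the chain rule shows that $\Phi^{(\nu,B)}$ is Fr\'echet differentiable with
\[
D_{(\mu,A)}\Phi^{(\nu,B)}[(g,G)]=D_{(\nu,\mu,B,A)}\Phi[(0,g,0,G)] .
\]

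It then only remains to evaluate the right-hand side using formula \eqref{eq:D}. Evaluated at the point $(\nu,\mu,B,A)$ --- so that the matrix slot holding $A$ in Lemma~\ref{p:Phi:diff} is now filled by $B$, and the eigendecompositions in \eqref{eq:lambda,kappa} become those of $B$ and of $B^{1/2}AB^{1/2}=\sum_l\kappa_l r_l r_l^t$ --- and with perturbation $(g,g',G,G')=(0,g,0,G)$, the first two terms collapse to $2(\mu-\nu)\cdot g+\tr G$; the two sums carrying the lemma's third-slot perturbation (those with $P_iGP_i$ and with $P_iGP_m$) vanish, since that perturbation is now $0$; and the only remaining term, $-\sum_l\kappa_l^{-1/2}q_l^t\sqrt{A}G'\sqrt{A}q_l$, turns into $-\sum_l\kappa_l^{-1/2}r_l^t\sqrt{B}G\sqrt{B}r_l$. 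This is exactly \eqref{eq:D:onesample}.

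Since the argument is essentially bookkeeping, there is no real obstacle; the only thing one must be careful about is tracking which of the four arguments of $\Phi$ is perturbed after the exchange and recognizing, accordingly, that the eigenpairs $(\kappa_l,r_l)$ in \eqref{eq:D:onesample} are those of $B^{1/2}AB^{1/2}$, the image under the swap of the matrix $A^{1/2}BA^{1/2}$ from the lemma.
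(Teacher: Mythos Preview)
Your proposal is correct and matches the paper's own argument: the paper also derives the corollary by invoking the symmetry $\Phi(\mu,\nu,A,B)=\Phi(\nu,\mu,B,A)$, switching the roles in Lemma~\ref{p:Phi:diff}, and then setting the perturbations of $(\nu,B)$ to zero. Your explicit use of the affine embedding $\iota$ and the chain rule is just a clean formalization of what the paper phrases as ``switch the notation in the previous theorem and then consider $g'$ and $G'$ equal to zero.''
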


The previous theorem also allows a simpler representation of the derivative if we restrict to certain cases.
\begin{remark}[Commutative case $AB =BA$] 
\label{remark:derivcomm}
Here, we can choose the eigenbasis of $A$ and $B$ to be the same, which is thus also an eigenbasis of $A^{1/2}BA^{1/2}$ implying that $P_i q_l = \delta_{il}q_l$: If $\lambda_k'$ are the eigenvalues of $B$ this implies that  $\lambda_k \lambda_k' = \kappa_k$ and we obtain in Proposition \ref{p:Phi:diff}
 \begin{equation}\label{eq:D:symm}
  \begin{split}
   D_{(A,B)} & \phi^{(2)} [(G, G')] 
 = \tr G + \tr G' -\sum_{l=1}^\dimension \kappa_l^{1/2} \lambda_l^{-1}q_l^t G q_l - \kappa_l^{-1/2} \lambda_l q_l^t G' q_l \\
&= \sum_{l=1}^\dimension q_l^t G q_l \left(1-(\frac{\lambda_l'}{\lambda_l})^{1/2}\right) - \sum_{l=1}^\dimension q_l^t G' q_l \left(1-(\frac{\lambda_l}{\lambda_l'})^{1/2}\right).
  \end{split}
\end{equation}

This implies in particular that the derivative equals zero iff $A=B$.
\end{remark}

At the end of this section we state a result on the second order derivative of~$\Phi.$
\begin{theorem}\label{p:Phi:diff:2}
Let $\Phi: \R^{2d} \times S_{+}(\Rdim)^2 \to \R$ be as in Proposition \ref{p:Phi:diff}. The mapping is twice \Frechet{} differentiable. Its second derivative $D^2_{(\mu,\nu,A,B)}\Phi$ at a point $(\mu,\nu,A,B) \in \R^{2d} \times S_{+}(\Rdim)^2$ is a symmetric bilinear mapping from $\R^{2d} \times \R^{2(d \times d)} \times \R^{2d} \times \R^{2(d \times d)} \to \R$ which is defined by its quadratic form $$D^2_{(\mu,\nu,A,B)}\Phi [(g,g',G,G'),(g,g',G,G')], \quad 
g,g' \in \R^{d}, G, G' \in \R^{d \times d}.$$
\end{theorem}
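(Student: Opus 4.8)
The plan is to build the second derivative of $\Phi$ by differentiating the first-order formula \eqref{eq:D} once more, treating each of the four summands separately. The affine-in-$(\mu,\nu)$ part $2(\mu-\nu)\cdot(g-g')+\tr G+\tr G'$ contributes only $2\|g-g'\|^2$ to the quadratic form (the trace terms are linear, hence drop out), so the real work lies in the terms involving $A$, $B$ through the spectral data $\kappa_l,\lambda_i,P_i,q_l$. For these I would invoke the perturbation results of Section \ref{s.2nd.order}, in particular Corollary \ref{cor:Second derivative}, which gives the second-order expansion of a smooth scalar function of a compact Hermitian operator; applied to $A\mapsto A^{1/2}$ and to $(A,B)\mapsto (A^{1/2}BA^{1/2})^{1/2}$ this yields the Hessian of $\tr\bigl((A^{1/2}BA^{1/2})^{1/2}\bigr)$ directly in terms of the eigenvalues and spectral projections appearing in \eqref{eq:lambda,kappa}.

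Concretely, I would proceed in the following steps. First, record that $\Phi$ is a composition of $C^\infty$ maps away from the boundary of $S_+(\R^d)$: matrix multiplication, the map $M\mapsto M^{1/2}$ on positive definite matrices (analytic, as $M^{1/2}=\frac{1}{2\pi i}\oint z^{1/2}(z-M)^{-1}dz$ with the contour enclosing the spectrum of $M$ and avoiding the negative axis), and $\tr$. This establishes that $\Phi$ is twice (indeed infinitely) Fréchet differentiable on its domain, and that $D^2\Phi$ is the continuous symmetric bilinear form obtained by differentiating \eqref{eq:D}. Second, I would differentiate the map $A\mapsto A^{1/2}$: writing $DA^{1/2}[G]$ as the solution $X$ of the Sylvester equation $XA^{1/2}+A^{1/2}X=G$, the second derivative $D^2(A^{1/2})[G,G]$ solves $YA^{1/2}+A^{1/2}Y = -2X^2$ with $X=DA^{1/2}[G]$; in the eigenbasis of $A$ this has entries $\bigl(\lambda_i^{1/2}+\lambda_j^{1/2}\bigr)^{-1}$ times the corresponding entry of $-2X^2$. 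Third, apply the chain rule to $B\mapsto A^{1/2}BA^{1/2}$ (linear in $B$) and then the second-derivative formula for $M\mapsto M^{1/2}$ at $M=A^{1/2}BA^{1/2}$, using the eigendata $(\kappa_l,q_l)$. Fourth, assemble everything and evaluate the trace; the cross terms between the $G$-variation and the $G'$-variation come from the bilinearity of $M = (A+tG)^{1/2}(B+sG')(A+tG)^{1/2}$ in $(t,s)$, and will involve both eigensystems, consistent with the structure of the variance formulas in Theorem \ref{T.WS.ASYMP}. Finally, symmetry of $D^2\Phi$ is automatic from Schwarz's theorem once twice-differentiability (with continuous second derivative) is established.

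The main obstacle is the perturbation analysis of the square-root map at a matrix with possibly repeated eigenvalues. When $M$ has degenerate spectrum the naive formula $\sum_{i\neq j}(\text{entry})/(\mu_i^{1/2}+\mu_j^{1/2})$ has no singularity — the denominator $\mu_i^{1/2}+\mu_j^{1/2}$ is strictly positive even when $\mu_i=\mu_j$, since all eigenvalues are positive — so the square-root function itself is better behaved than, say, the eigenprojections. The subtlety is rather that the first derivative \eqref{eq:D} is written using the individual one-dimensional projections $P_i=p_ip_i^t$, which are \emph{not} differentiable across eigenvalue crossings; one must therefore re-express the second derivative so that it depends only on the total eigenspace projections (sums of $P_i$ over equal eigenvalues), which \emph{are} analytic. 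This is exactly why the degenerate-eigenvalue correction term (the last line of \eqref{eq:vartwosample.m.n}) appears, and handling it cleanly via the holomorphic functional calculus / Corollary \ref{cor:Second derivative} rather than by ad hoc manipulation of individual eigenvectors is the crux of the argument. Once that is in place, the explicit quadratic form is a bookkeeping exercise, so I would state $D^2\Phi$ abstractly here (as the theorem does) and defer the fully expanded expression to the proof in Section \ref{s.2}.
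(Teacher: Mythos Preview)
Your concrete Step~1 --- recognizing $\Phi$ as a composition of the analytic map $M\mapsto M^{1/2}$ (via the contour integral / Corollary~\ref{cor:Second derivative}), matrix multiplication, and $\tr$, and concluding twice (in fact $C^\infty$) Fr\'echet differentiability by the chain and product rules --- is exactly the paper's proof. The paper writes $\Psi(A,B)=\psi(\psi(A)B\psi(A))$ with $\psi(C)=C^{1/2}$, invokes Corollary~\ref{cor:Second derivative} for the twice-differentiability of $\psi$, and then applies Lemmas~\ref{lem:FrechetChainrule} and~\ref{lem:FrechetProductRule}; that is the entire argument, since the theorem asserts only existence of $D^2\Phi$, not an explicit formula.

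Where your proposal diverges is in the framing and the extra work. Your opening sentence (``differentiating the first-order formula~\eqref{eq:D} once more'') is \emph{not} what the paper does, and is precisely the route that creates the repeated-eigenvalue obstacle you then spend a paragraph on: the individual rank-one projections $P_i=p_ip_i^t$ and the vectors $q_l$ in~\eqref{eq:D} are not smooth in $(A,B)$ at eigenvalue crossings, so differentiating that representation term-by-term is not legitimate without regrouping. The paper sidesteps this entirely by never touching the expanded spectral formula at second order --- it stays at the level of the composition $\psi\circ(\text{bilinear})\circ\psi$, where Corollary~\ref{cor:Second derivative} already guarantees smoothness regardless of multiplicities. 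Your Steps~2--4 and the Sylvester-equation computation are therefore superfluous for the theorem as stated (the paper explicitly declines to write out $D^2\Phi$, cf.\ the remark following the theorem), though they would be the right ingredients if one wanted the explicit quadratic form.
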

\begin{remark}
 It would be possible to use the calculations from Corollary \ref{cor:Second derivative} in Theorem \ref{T.WS.ASYMP:P=Q} to obtain an explicit formula for the second derivative for $d\geq 2$.
 However, this calculation is very tedious even for $d=2$ and we will not carry it out here.
\end{remark}

\subsection{Bootstrap}
\label{ssection:Applications}

Applications of Theorems~\ref{T.WS.ASYMP.ONESAMPLE} and \ref{T.WS.ASYMP} such as the construction of confidence sets require one to estimate the variances $\upsilon$ and $\varpi$ of the limiting distribution.
But for the construction of confidence sets those quantities need to be estimated.
Of course, these can be estimated from the data by their empirical counterparts  as well.
Another option is to bootstrap the limiting distribution, which becomes particularly useful for an application of Theorem~\ref{T.WS.ASYMP:P=Q} as the limiting distribution has a complicated form.
In fact, due to the differentiability results of the last section (Lemma~\ref{p:Phi:diff} and Theorem~\ref{p:Phi:diff:2}) we can rigorously establish such a bootstrap.
We illustrate the bootstrap approximation for the one sample case, the two sample case is analogous.
For $m \leq n$ we denote by $X_1^\ast, \dots X_m^\ast$ an independent resampling (with replacement) of the sample $X_1,\dots,X_n$ and define $\Gwhat_m^\ast$ as in 
(\ref{e.estimator}) using that resampling.  

As in the beginning of Section~\ref{ss.limits} a distinction for the cases $\P \neq \Q$ and $\P = \Q$ is required.
The former allows an $n$-out-of-$n$ bootstrap, the latter requires an $m$-out-of-$n$ bootstrap, s.t.~$m=o(n)$.

\begin{proposition}[$n$ out of $n$ bootstrap]
 \label{propn:onesample.bootstrap}
 Suppose $\P \neq \Q$.
 Then 
   \begin{equation}
 \label{eq:onesample.bootstrap}
 \sqrt{n}\left(\Gwhat_n^\ast - \Gwhat_n  \right) \Rightarrow N(0, \upsilon^2)
 \end{equation}
 conditionally given $X_1,X_2,\dots$ in probability.
\end{proposition}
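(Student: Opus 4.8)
The plan is to establish \eqref{eq:onesample.bootstrap} by the same route as Theorem~\ref{T.WS.ASYMP.ONESAMPLE}, namely by combining a (conditional) central limit theorem for the bootstrapped empirical mean and covariance with the functional delta method (Theorem~\ref{thm:vdv:delta}) applied to the map $\Phi^{(\nu,\Xi)}$ of Corollary~\ref{p:Phi:diff-onesample}. Write $\theta = (\mu,\Sigma)$ for the true parameter, $\hat\theta_n = (\hat\mu_n,\hat\Sigma_n)$ for the empirical estimate, and $\hat\theta_m^\ast = (\hat\mu_m^\ast,\hat\Sigma_m^\ast)$ for the estimate computed from the resample $X_1^\ast,\dots,X_m^\ast$. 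The key observation is that $\Gwhat_n = \Phi^{(\nu,\Xi)}(\hat\theta_n)$ and $\Gwhat_m^\ast = \Phi^{(\nu,\Xi)}(\hat\theta_m^\ast)$, so that it suffices to transport a bootstrap CLT for $\sqrt{n}(\hat\theta_m^\ast-\hat\theta_n)$ through the Fr\'echet derivative at $\theta$.

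First I would record the conditional CLT for the bootstrapped moments. Since $\hat\mu_m^\ast$ is the sample mean of an i.i.d.\ (with replacement) draw from the empirical distribution $\P_n$, and $\hat\Sigma_m^\ast$ the corresponding sample covariance, the classical bootstrap for the mean together with its extension to smooth functions of empirical moments (or equivalently a multivariate Lindeberg argument conditional on $X_1,X_2,\dots$, using that $\P_n$ has, almost surely, moments converging to those of $\P$) yields that, conditionally given $X_1,X_2,\dots$ and in probability,
\begin{equation}
\label{e.boot.clt}
\sqrt{m}\left( \hat\mu_m^\ast - \hat\mu_n,\ \hat\Sigma_m^\ast - \hat\Sigma_n \right) \Rightarrow (Z_\mu, Z_\Sigma),
\end{equation}
where $(Z_\mu, Z_\Sigma)$ is the \emph{same} centered Gaussian limit as in Lemma~\ref{lem:anderson} (the Anderson-type CLT for $\sqrt{n}(\hat\mu_n-\mu,\hat\Sigma_n-\Sigma)$), because the asymptotic covariance structure depends only on moments of $\P$ up to order four, and these are consistently estimated by the empirical moments. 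For $m = n$ this is exactly the $n$-out-of-$n$ bootstrap statement we need; for general $m\le n$ with $m\to\infty$ the same display holds with rate $\sqrt{m}$.

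Next, I would apply the delta method. Because $\hat\theta_n \to \theta$ almost surely and $\hat\theta_m^\ast \to \theta$ conditionally in probability, and because $\Phi^{(\nu,\Xi)}$ is Fr\'echet differentiable at $\theta$ with derivative $D_\theta\Phi^{(\nu,\Xi)}$ given by \eqref{eq:D:onesample}, a Taylor expansion gives
\begin{equation}
\label{e.boot.taylor}
\Gwhat_m^\ast - \Gwhat_n = D_\theta\Phi^{(\nu,\Xi)}\!\left[\hat\theta_m^\ast - \hat\theta_n\right] + R_m, \qquad \sqrt{m}\,R_m \to 0
\end{equation}
conditionally in probability, the remainder being controlled exactly as in the proof of Theorem~\ref{T.WS.ASYMP.ONESAMPLE} (differentiability plus the fact that $\hat\theta_m^\ast - \hat\theta_n = O_{\P}(m^{-1/2})$ conditionally). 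Combining \eqref{e.boot.clt}, \eqref{e.boot.taylor}, continuity of the linear map $D_\theta\Phi^{(\nu,\Xi)}$, and the conditional continuous mapping theorem, we obtain $\sqrt{m}(\Gwhat_m^\ast - \Gwhat_n) \Rightarrow N(0,\upsilon^2)$ conditionally given $X_1,X_2,\dots$ in probability, where $\upsilon^2$ is the variance of $D_\theta\Phi^{(\nu,\Xi)}[(Z_\mu,Z_\Sigma)]$ — the very same computation that produced \eqref{eq:varonesample}. Specialising to $m=n$ gives \eqref{eq:onesample.bootstrap}.

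The main obstacle is making the conditional CLT \eqref{e.boot.clt} rigorous with the correct (deterministic) Gaussian limit and, in tandem, showing that the delta-method remainder in \eqref{e.boot.taylor} is $o_{\P}(m^{-1/2})$ \emph{conditionally}. The cleanest way to handle both is to invoke the bootstrap version of the delta method for Fr\'echet (or Hadamard) differentiable maps — e.g.\ the result in van der Vaart~\cite{vdV:98}, Theorem~23.5 — which reduces the whole claim to the bootstrap consistency of the underlying empirical process for the pair (mean, second moment). That consistency is standard once one checks a conditional Lindeberg condition, using the almost sure convergence of the empirical fourth moments of $\P_n$ to those of $\P$; here the Gaussian (or more generally finite-fourth-moment) assumption is exactly what is needed. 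One must also be slightly careful that $\Phi^{(\nu,\Xi)}$ is only defined on $S_+(\R^d)$, so one argues on the event (of probability tending to one) that $\hat\Sigma_n$ and $\hat\Sigma_m^\ast$ are positive definite, which holds eventually since $\Sigma$ has full rank.
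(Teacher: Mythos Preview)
Your proposal is correct and follows essentially the same approach as the paper: both arguments reduce the claim to the bootstrap delta method (Theorem~23.5 in \cite{vdV:98}) applied to the Fr\'echet differentiable map $\Phi^{(\nu,\Xi)}$, together with bootstrap consistency of the sample mean and sample covariance (which the paper derives from bootstrap consistency of the multivariate i.i.d.\ average $\frac{1}{n}\sum X_i X_i^t$). Your write-up is in fact more detailed than the paper's one-line justification, and your remark about restricting to the event where $\hat\Sigma_n$ and $\hat\Sigma_m^\ast$ are positive definite is a useful addition.
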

Here, weak convergence conditionally given $X_1,X_2,\dots$ in probability means the following: Denote by $\rho$ a metric corresponding to the topology of weak convergence and by ${\cal L}(\cdot)$ the law of a random quantity. Then (\ref{eq:onesample.bootstrap}) means that $\rho({\cal L}(\Gwhat_n^\ast - \Gwhat_n)), N(0, \upsilon^2))$ as a function of $X_1, \dots, X_n$ converges to zero in probability. 

Proposition \ref{propn:onesample.bootstrap} follows immediately from Theorem 23.5 in \cite{vdV:98} combined with the differentiability of Lemma~\ref{p:Phi:diff} and the strong consistency of the bootstrap result for the sample mean and the sample covariance matrix of Gaussian distributions, respectively.
Note that this follows from the bootstrap consistency of the multivariate empirical process (Theorem~23.7 in \cite{vdV:98}) together with Hadamard differentiability of $\Sigma(F)$.
In our case this also follows immediately in an elementary way from the fact that $\hat{\Sigma}_n$
is independent of $\hat{\mu}_n$ and from the fact that its distribution
does not depend on $\mu.$ Thus, it follows from the bootstrap consistency for the multivariate i.i.d.~average $\frac{1}{n} \sum_{i=1}^n X_i X_i^t$.

Note that since the left hand side of (\ref{eq:onesample.bootstrap}) only depends on the sample (and some further randomness) the result serves to estimate the right hand side and so in particular $\upsilon^2.$

For $\P = \Q$ we obtain the $m$ out of $n$ bootstrap.
\begin{proposition}[$m$ out of $n$ bootstrap]
Let $m=m(n)$ such that $m(n)/n \rightarrow 0$ as $n \rightarrow \infty.$  Suppose further that $\P = \Q$.
 Then 
   \begin{equation}
 \label{eq:onesample.bootstrap.P=Q}
 m\left(\Gwhat_m^\ast - \Gwhat_n  \right) \Rightarrow Z_1 ,
 \end{equation}
  conditionally given $X_1,X_2,\dots$ in probability, where $Z_1$ is the distribution from Theorem~\ref{T.WS.ASYMP:P=Q}.
\end{proposition}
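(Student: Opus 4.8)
The plan is to argue exactly as in Theorem~\ref{T.WS.ASYMP:P=Q}, but with the resampled empirical mean and covariance in place of the empirical mean and covariance, tracking conditional laws given $X_1,X_2,\dots$ throughout. First I would recall the structure of the proof of Theorem~\ref{T.WS.ASYMP:P=Q}: since $\P=\Q$ the first \Frechet{} derivative of $\Phi$ vanishes (Remark~\ref{remark:derivcomm}), so the second-order term in the Taylor expansion of $\Phi$ around $(\mu,\mu,\Sigma,\Sigma)$ governs the asymptotics, and $n\Gwhat_n = n\,\Phi(\hat\mu_n,\nu,\hat\Sigma_n,\Xi) \Rightarrow Z_1$, where $Z_1$ is the image of a Gaussian vector (the joint weak limit of $\sqrt n(\hat\mu_n-\mu)$ and $\sqrt n(\hat\Sigma_n-\Sigma)$, from Lemma~\ref{lem:anderson}) under the quadratic form $\tfrac12 D^2_{(\mu,\mu,\Sigma,\Sigma)}\Phi$, with the second argument slots filled by zero since $\Q$ is fixed and known. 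The statement to be proved replaces $\Gwhat_n$ on the left of \eqref{eq:onesample.bootstrap.P=Q} by $\Gwhat_n$ and adds $\Gwhat_m^\ast$; note that under $\P=\Q$ one has $n\Gwhat_n = O_\P(1)$ by Theorem~\ref{T.WS.ASYMP:P=Q}, so $m\Gwhat_n = (m/n)\,n\Gwhat_n = o_\P(1)$ because $m/n\to 0$. Hence it suffices to show $m\,\Gwhat_m^\ast \Rightarrow Z_1$ conditionally given $X_1,X_2,\dots$ in probability.

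For that, I would invoke the bootstrap central limit theorem for the sample mean and the (vectorized) sample covariance matrix: conditionally given $X_1,X_2,\dots$, in probability,
\[
 \sqrt{m}\bigl(\hat\mu_m^\ast - \hat\mu_n,\ \hat\Sigma_m^\ast - \hat\Sigma_n\bigr) \Rightarrow (W_\mu, W_\Sigma),
\]
where $(W_\mu,W_\Sigma)$ is the same Gaussian limit appearing in Lemma~\ref{lem:anderson}. This is the conditional multivariate CLT for i.i.d.\ averages (the mean being a plain average of the $X_i^\ast$, and the covariance estimator being, up to a lower-order correction, the average of $X_i^\ast (X_i^\ast)^t$, so that bootstrap consistency for i.i.d.\ averages applies, exactly as invoked for Proposition~\ref{propn:onesample.bootstrap}); one uses here that the $X_i$ have moments of all orders. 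Writing $\hat\mu_m^\ast = \hat\mu_n + m^{-1/2}R_{\mu,m}$ and $\hat\Sigma_m^\ast = \hat\Sigma_n + m^{-1/2}R_{\Sigma,m}$ with $(R_{\mu,m},R_{\Sigma,m})$ conditionally converging to $(W_\mu,W_\Sigma)$, I would then Taylor-expand
\[
 m\,\Gwhat_m^\ast = m\,\Phi\bigl(\hat\mu_m^\ast,\nu,\hat\Sigma_m^\ast,\Xi\bigr)
\]
around the point $(\hat\mu_n,\nu,\hat\Sigma_n,\Xi)$, which converges a.s.\ to $(\mu,\mu,\Sigma,\Sigma)$ (since $\nu=\mu$, $\Xi=\Sigma$). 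The first-order term is $m^{1/2}\,D_{(\hat\mu_n,\nu,\hat\Sigma_n,\Xi)}\Phi[(R_{\mu,m},0,R_{\Sigma,m},0)]$; although the first derivative vanishes exactly only at $(\mu,\mu,\Sigma,\Sigma)$, along the sequence $(\hat\mu_n,\hat\Sigma_n)$ it is of size $O_\P(\|\hat\mu_n-\mu\| + \|\hat\Sigma_n-\Sigma\|) = O_\P(n^{-1/2})$, so this term is $O_\P(m^{1/2}n^{-1/2}) = o_\P(1)$ by $m/n\to 0$ — this is the one place where $m=o(n)$ is genuinely used. The quadratic term is $\tfrac12 D^2_{(\hat\mu_n,\nu,\hat\Sigma_n,\Xi)}\Phi[(R_{\mu,m},0,R_{\Sigma,m},0),(R_{\mu,m},0,R_{\Sigma,m},0)]$, and by continuity of $(\mu,\nu,A,B)\mapsto D^2_{(\mu,\nu,A,B)}\Phi$ together with the conditional convergence in distribution of $(R_{\mu,m},R_{\Sigma,m})$ and a conditional continuous-mapping / Slutsky argument, this converges conditionally in distribution to $\tfrac12 D^2_{(\mu,\mu,\Sigma,\Sigma)}\Phi[(W_\mu,0,W_\Sigma,0),(W_\mu,0,W_\Sigma,0)] = Z_1$. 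The remainder term is $o(\|(R_{\mu,m},R_{\Sigma,m})\|^2) = o_\P(1)$ in conditional probability.

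The main obstacle is making the conditional second-order delta method fully rigorous: one must control the Taylor remainder uniformly enough that it vanishes conditionally in probability along almost every sample path, and one must justify plugging the data-dependent, a.s.-convergent base point $(\hat\mu_n,\hat\Sigma_n)$ into $D^2\Phi$ while its arguments $(R_{\mu,m},R_{\Sigma,m})$ converge only conditionally in distribution — this is a conditional Slutsky argument requiring a metric $\rho$ for weak convergence (as spelled out after Proposition~\ref{propn:onesample.bootstrap}) and the fact that the quadratic form is jointly continuous in (base point, argument). The cleanest route is to package this as a conditional version of the second-order delta method (Theorem~\ref{thm:vdv:delta}-type statement together with Theorem~\ref{p:Phi:diff:2}), exactly mirroring how Proposition~\ref{propn:onesample.bootstrap} invokes Theorem~23.5 of \cite{vdV:98}; everything else (the vanishing of the first-order term, the reduction of $m\Gwhat_n$ to $o_\P(1)$) is immediate from $m=o(n)$ and the results already established.
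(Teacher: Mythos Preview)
Your proposal is correct and follows essentially the same route as the paper, only spelled out in more detail: the paper's entire justification is the one-line remark that the result ``follows along the lines of the proof of Theorem~5.1 in \cite{freitag2005hadamard} using the second order differentiability of Theorem~\ref{p:Phi:diff:2} together with the $m$ out of $n$ bootstrap consistency result for the sample mean and sample covariance matrix of Gaussian distributions.'' In other words, the paper invokes the packaged conditional second-order delta method from \cite{freitag2005hadamard} with exactly the two inputs you identify (bootstrap CLT for $(\hat\mu_m^\ast,\hat\Sigma_m^\ast)$ and twice Fr\'echet differentiability of $\Phi$), and your argument --- killing $m\Gwhat_n$ via $m/n\to 0$, killing the first-order term via $\|D_{(\hat\mu_n,\mu,\hat\Sigma_n,\Sigma)}\Phi\|=O_\P(n^{-1/2})$, and passing to the limit in the quadratic term by continuity of $D^2\Phi$ in its base point --- is precisely the mechanism underlying that cited theorem.
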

This follows along the lines of the proof of Theorem~5.1 in \cite{freitag2005hadamard} using the second order differentiability of Theorem~\ref{p:Phi:diff:2} together with the  $m$ out of $n$ bootstrap consistency result for the sample mean and sample covariance matrix of Gaussian distributions.

\section{Applications}\label{s.app}

Theorems~\ref{T.WS.ASYMP.ONESAMPLE}-\ref{T.WS.ASYMP:P=Q} can be used (in combination with the bootstrap results in Section~\ref{ssection:Applications}) for several purposes: e.g.~testing the null hypotheses $H: \Gw = 0$ (Theorem~\ref{T.WS.ASYMP:P=Q} for the two sample case) or neighborhood hypotheses of the form $H: \Gw > \delta$ vs.~$K: \Gw \leq \delta$ in order to validate the closeness of the multivariate normal distributions in Wasserstein distance.
Here $\delta >0$ is a threshold to be fixed in advance, see e.g.~\cite{CM:98} for a related test and further references on these types of testing problems. The test amounts to rejecting whenever $r_n(\Gwhat - \delta) > u_\alpha$ (see Theorem~\ref{T.WS.ASYMP.ONESAMPLE} in the one sample case), where $u_\alpha$ denotes the $\alpha$-quantile of a standard normal random variable.

Another immediate consequence are (bootstrap) confidence intervals for $\Gw$; which require the asymptotics for $\P \neq \Q$ (Theorem~\ref{T.WS.ASYMP.ONESAMPLE} and Theorem~\ref{T.WS.ASYMP}), see \cite{shao1995jackknife} for a general exposition and \cite{czado_assessing_1998} for the Wasserstein distance on the real line.
In the following  we  exemplarily illustrate our methodology for the one sample test on a real data application.

\subsection{Positions of  amino acids in a protein}

In order to understand the biological function of proteins it is important to know both their three dimensional structure as well as their conformational dynamics.
X-ray crystallography and NMR spectroscopy can help elucidate high-resolution information about biomolecular structures but conformational dynamics is more elusive, see e.g. \cite{Honndorf:12}.
Small-amplitude dynamics is thought to be reflected by crystallographic B factors, whereas NMR structures are often interpreted as native state ensembles. However, both interpretations should be taken with some caution. Therefore, it is of interest to investigate whether the crystallographic view on conformational dynamics provided by B factors agrees with the ensemble view provided by NMR. To this end, we will use the Wasserstein based test  to quantify to what amount the local flexibility measured by X-ray crystallography agrees with the structural variability seen by NMR.

\smallskip

Our analysis is based on the crystallographic model of proteins, see Section~2.2 of \cite{Trueblood:es0238}. This model postulates that each amino acid in the protein is a point that has a position which follows a Gaussian distribution with mean $\mu \in \R^3$ and covariance matrix $\beta^2 \1$, where $\1$ is the identity matrix in $\R^3$.
It is customary to assume the positions of different amino acids as independent. In this model, the quantity $\beta^2$ is called the $B$-factor and is related to the Debye-Waller factor used in crystallography.

We focus on a particular amino acid and want to compare the ``true'' distribution $\P$ proposed by the crystallographic model to the samples obtained from NMR spectroscopy. Arguably, the  Wasserstein distance is particularly well-suited for this scenario as it accounts for a measurement of displacement.

In order to obtain a  test for the hypothesis
\begin{equation}\label{e.H_0}
 \text{H}_0:\ \text{the samples come from the true (Gaussian) distribution } \P .
\end{equation}
we apply Theorem~\ref{T.WS.ASYMP:P=Q}. In the present setting 
 we can further simplify and  obtain  an explicit description of the limit law $Z_1$.
If we assume that $\P = N(\mu,\sigma^2 \1)$ as reference distribution in $\R^3$, then we obtain for \eqref{e.P=Q.onesample}:
\begin{equation*} 
 Z_1 = \sigma^2 (2X + 6 X' + \tfrac{3}{2} X'')
\end{equation*}
for independent $\chi^2_3$-random variables $X$ and $X'$ and $\chi_6^2$-random variable $X''$ with three and six degrees of freedom, respectively.
We denote the $\alpha$-quantile of the variable $Z_1/\sigma^2$ by $q_\alpha$, $\alpha \in (0,1)$.
Then a test for (\ref{e.H_0}) at level $\alpha$ is given by:
\begin{equation*}
 \text{ Reject $H_0$,  if } \quad 
 \frac{n}{\sigma^2} \Gwhat (\hat{\P}_n, \P) \geq q_{1-\alpha} .
\end{equation*}

\medskip

We analyse the protein ubiquitin (consisting of 76 amino acids, PDB reference 1ubq) using the crystallography data (implying $\P$) and the NMR data (with sample size $n=10$) from the Protein Database (RCSB PDB), see~\cite{Berman01012000}. For each of those amino acids we test the hypothesis $H_0$ in \eqref{e.H_0}.
\begin{figure}
\centering
\includegraphics[width = 9cm]{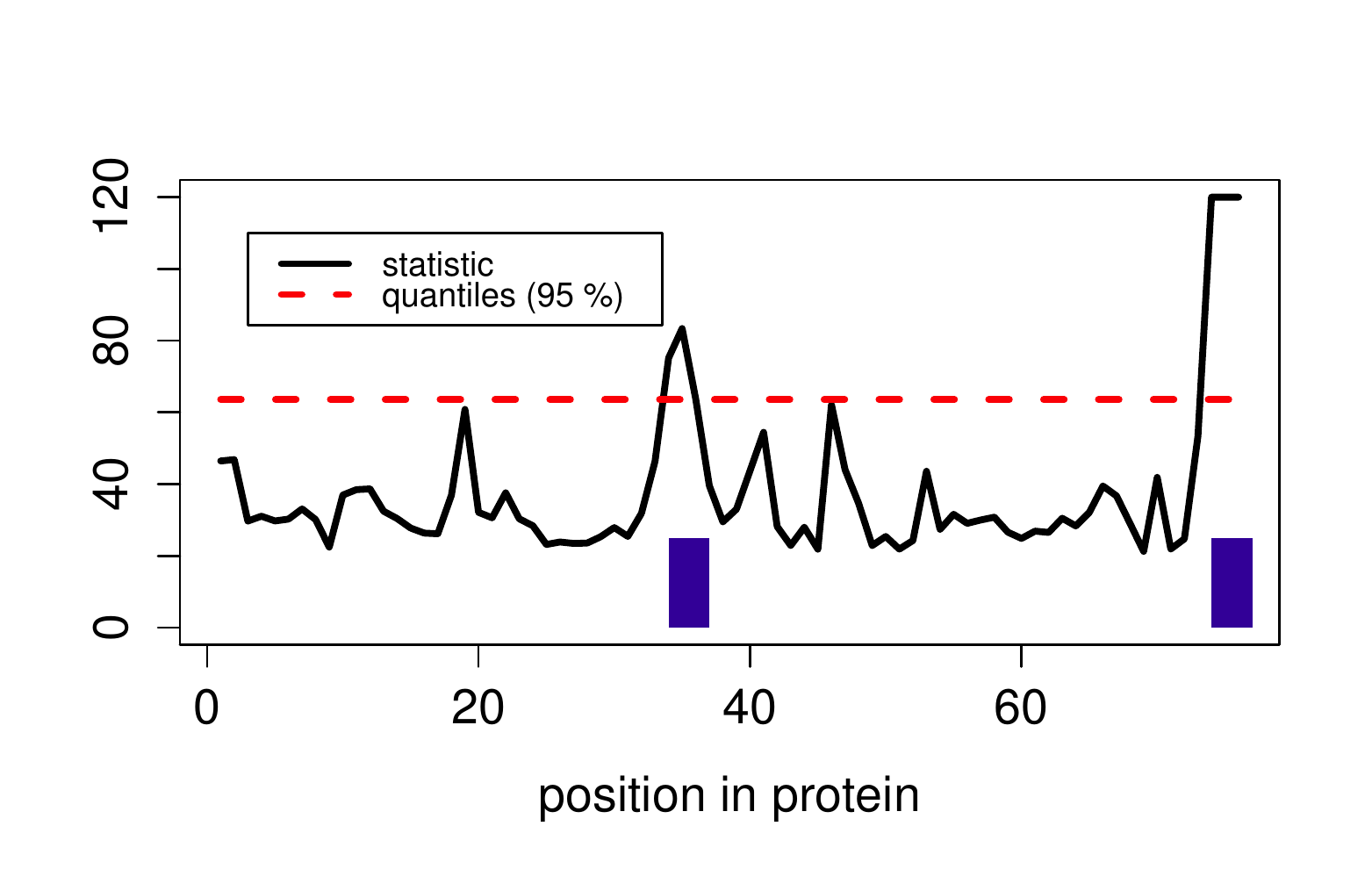}
\caption{Test for $H_0$; regions of rejection in blue.}\label{f.1}
\end{figure}
At level $\alpha = 0.05$ for 8 of the 76 amino acids we reject $H_0$ (see Figure~\ref{f.1})
and at level $\alpha = 0.01$ for 4 of the 76 amino acids we reject $H_0$.
Interestingly, all the rejection appears in the loops of the protein which suggests that NMR and crystallographic structure determination does not align well at these locations. At other locations of the  ubiquitin protein we did not find evidence for deviation from the normal distribution as predicted by the model.

We stress that our analysis does not provide evidence for the positions not being jointly multivariate normal. This is an issue which would require larger samples but could be investigated with our methodology as well.

\section{Proof of Theorems \ref{T.WS.ASYMP.ONESAMPLE}, \ref{T.WS.ASYMP}  and \ref{T.WS.ASYMP:P=Q}}\label{s.2}

In order to prove Theorem \ref{T.WS.ASYMP} and Theorem \ref{T.WS.ASYMP:P=Q} we will apply the Delta method.
To prepare for this, in Section \ref{Sec:Frechetdiff.Proof}, 
we provide the proofs for the Fr\'echet differentiation of the Gaussian Wasserstein estimator from Section \ref{Sec:Frechetdiff}.
Section \ref{Sec:Deltamethod} collects the required standard results on the convergence of the empirical mean and covariance matrix of Gaussian distributions.
Combining these results with the differentiation results we complete the proof of Theorems \ref{T.WS.ASYMP.ONESAMPLE} and \ref{T.WS.ASYMP} with the exception of determining the variance of the limit which will be provided in Section \ref{Sec:variance}.
The proof of Theorem  \ref{T.WS.ASYMP:P=Q} follows similar arguments and is also completed in Section \ref{Sec:Deltamethod}.

\subsection{\Frechet{} differentiability}
\label{Sec:Frechetdiff.Proof}

First we sketch an application of the results from Section \ref{s.2nd.order}.
Let 
\begin{equation*}\DD = L(\Rdim,\Rdim) \simeq \R^{d \times d}\end{equation*}
be the set of (continuous) linear maps from $\Rdim$ to  $\Rdim$ and let $\GG=\IC$ be the complex numbers.
Clearly, $\DD$ is a Banach algebra with respect to the classical operator norm $\|A\|=\| A\|_\DD = \sup_{||x||=1} ||Ax||, A \in \DD.$ Consider the subspace $S_+(\Rdim)\subset \DD$ of symmetric, positive definite matrices (which means that all eigenvalues are positive).
Then any $A \in S_{+}(\Rdim)$ can be written in the form
\begin{equation}
\label{hermitdecomp}
A= \sum_{i=1}^{d} \lambda_i P_i ,
\end{equation}
$\lambda_i \in (0,\infty)$, $P_i P_j = \delta_{ij}P_i$ for $1\leq i \leq j \leq d.$ 
Note that this definition is different from \cite{GHJR:09} in that repeated eigenvalues are listed according to their multiplicity.
Suppose $\psi: D \to \IC$ is analytic.
Then it is possible to define $\psi(A')$ for $A'$ in a neighborhood of $A \in S_{+}(\Rdim)$ as in \eqref{e.tr4} and apply Corollary \ref{cor:Second derivative}.
We will do that in the two proofs to come below.

The first proof deals with the derivative of the Gaussian Wasserstein distance functional.

\begin{proof}[Proof of Proposition \ref{p:Phi:diff}]
 The mapping $\phi^{(1)}: \R^{2d} \to \R, \ (\mu,\nu) \mapsto \|\mu -\nu\|^2$ has \Frechet{}-derivative
 \begin{align}
 D_{(\mu,\nu)}\phi^{(1)} [(g,g')]  = 2 \langle \mu-\nu, g-g' \rangle.
  \label{eq:pf:mean} 
    \end{align}
Next, we treat the second part of the mapping by considering $\phi^{(2)}: S_{+}(\Rdim)^2\to \R$ given by 
\[
(A,B) \mapsto  \phi^{(2)}(A,B)=\tr(A) + \tr(B) - 2 \tr\left( (A^{1/2}BA^{1/2})^{1/2} \right).
\]
Here, we first consider $\psi: D \to \IC, z \mapsto z^{1/2}$, where $D$ is some open bounded subset of $\IC$ not containing elements of the ray $\R_{-}=(-\infty, 0].$ An application of Corollary \ref{cor:Second derivative} yields its Fr\'echet derivative at $A \in  S_{+}(\Rdim)^2,$
\begin{equation}\label{eq:ghjr:root} 
D_A\psi[A']  = \sum_{i=1}^d \frac{1}{2\lambda_i^{1/2}} P_i A'P_i + \sum_{\substack{i,k=1 \\ \lambda_i \neq \lambda_k}}^d \frac{\lambda_k^{1/2} - \lambda_i^{1/2}}{\lambda_k - \lambda_i} P_i A' P_k  
\end{equation}
for any $A' \in \R^{d \times d}.$ Using this we can deduce the \Frechet{} derivative of
\[ 
\phi:S_+(\R^\dimension)^2 \to Y,\ (A,B) \mapsto A+B - 2 \left( A^{1/2}BA^{1/2}\right)^{1/2}. 
\]
First note that by linearity, Lemmas \ref{lem:FrechetProductRule}, \ref{lem:FrechetChainrule}, and \ref{lem:FrechetProjection},
\begin{align*}
 D_{(A,B)} & \phi[(G,G')] = G + G' \\
 &-2 D\psi_{A^{1/2}BA^{1/2}} \left[ D\psi_{A} [G]  BA^{1/2} + A^{1/2} G' A^{1/2} + A^{1/2} B D\psi_{A} [G] \right] ,
\end{align*}
$G, G' \in \R^{d \times d}$. With \eqref{eq:lambda,kappa} and \eqref{eq:ghjr:root} we can write the derivative more explicitly
\begin{align*}
 & D_{(A,B)}  \phi[(G,G')]  = G+G' \\
 &- 2 \sum_{l=1}^\dimension \frac{1}{2\kappa_l^{1/2}}  \left(\sum_{i=1}^\dimension \frac{1}{2\lambda_i^{1/2}} Q_l P_i G P_i B \sqrt{A} Q_l  
 +  Q_l\sqrt{A} G' \sqrt{A} Q_l \right.\\
 &\phantom{AAAAAAAAAAAAAAAAAAAA}+ \left.\sum_{j=1}^\dimension \frac{1}{2\lambda_j^{1/2}} Q_l \sqrt{A} B P_j G P_j Q_l \right) \\
 &- 2 \sum_{\substack{l,k=1 \\ \kappa_l \neq \kappa_k}}^d \frac{\kappa_l^{1/2}- \kappa_k^{1/2}}{\kappa_l-\kappa_k}  \left( \sum_{i=1}^d  \frac{1}{2\lambda_i^{1/2}} Q_l P_i G P_i B  \sqrt{A} Q_k + Q_l \sqrt{A} G' \sqrt{A} Q_k \right.\\
 &\phantom{AAAAAAAAAAAAAAAAAAAAAAA}+ \left.   \sum_{j=1}^d \frac{1}{2\lambda_j^{1/2}} Q_l \sqrt{A} B P_j G P_j Q_k \right) \\
 &- 2 \sum_{l=1}^\dimension \frac{1}{2\kappa_l^{1/2}} \sum_{\substack{i,m=1 \\ \lambda_i \neq \lambda_m}}^d \frac{\lambda_i^{1/2}-\lambda_m^{1/2}}{\lambda_i - \lambda_m} \left( Q_l P_i G P_m B \sqrt{A} Q_l + Q_l \sqrt{A} B P_i G P_m Q_l \right) \\
 &- 2 \sum_{\substack{l,k=1 \\ \kappa_l \neq \kappa_k}}^d  \frac{\kappa_l^{1/2}- \kappa_k^{1/2}}{\kappa_l-\kappa_k}  \sum_{\substack{i,m=1 \\ \lambda_i \neq \lambda_m}}^d \frac{\lambda_i^{1/2}-\lambda_m^{1/2}}{\lambda_i - \lambda_m} \\
 & \phantom{AAAAAAAAAAAAAA} \left( Q_l P_i G P_m B \sqrt{A} Q_k + Q_l \sqrt{A} B P_i G P_m Q_k \right).
\end{align*}
Using the fact that 
\begin{align*}
\lambda_i^{1/2}P_iB\sqrt{A}Q_l = P_i\sqrt{A} B\sqrt{A} Q_l = \kappa_l P_i Q_l ,\\
\lambda_j^{1/2}Q_l\sqrt{A}BP_j = Q_l \sqrt{A} B \sqrt{A} P_j = \kappa_l Q_l P_j,
\end{align*}
allows us to simplify the above expression to
\begin{align*}
 D_{(A,B)} & \phi[(G,G')]  = G+G' \\
 - & \sum_{l=1}^\dimension \kappa_l^{1/2} \Big( \sum_{i=1}^\dimension  \frac{1}{2\lambda_i} Q_l P_i G P_i Q_l  + \frac{1}{\kappa_l} Q_l\sqrt{A} G' \sqrt{A} Q_l \\
 & \phantom{AAAAAAAAAAAAAAAAAA} + \sum_{j=1}^\dimension \frac{1}{2\lambda_j} Q_l P_j G P_j Q_l \Big) \\
 - 2& \sum_{\substack{l,k=1 \\ \kappa_l \neq \kappa_k}}^d \frac{\kappa_l^{1/2}- \kappa_k^{1/2}}{\kappa_l-\kappa_k}  \Big( \sum_{i=1}^d  \frac{\kappa_k}{2\lambda_i} Q_l P_i G P_i  Q_k +  Q_l \sqrt{A} G' \sqrt{A} Q_k  \\
 & \phantom{AAAAAAAAAAAAAAAAAA}+  \sum_{j=1}^d \frac{\kappa_l}{2\lambda_j} Q_l  P_j G P_j Q_k \Big) 
 \end{align*}
 \begin{align*}
 - & \sum_{l=1}^\dimension \kappa_l^{1/2} \sum_{\substack{i,m=1 \\ \lambda_i \neq \lambda_m}}^d \frac{\lambda_i^{1/2}-\lambda_m^{1/2}}{\lambda_i - \lambda_m} \left(\frac{1}{\lambda_m^{1/2}} Q_l P_i G P_m  Q_l + \frac{1}{\lambda_i^{1/2}} Q_l  P_i G P_m Q_l \right) \\
 - 2 & \sum_{\substack{l,k=1 \\ \kappa_l \neq \kappa_k}}^d  \frac{\kappa_l^{1/2}- \kappa_k^{1/2}}{\kappa_l-\kappa_k}  \sum_{\substack{i,m=1 \\ \lambda_i \neq \lambda_m}}^d \frac{\lambda_i^{1/2}-\lambda_m^{1/2}}{\lambda_i - \lambda_m}  \Big(\frac{\kappa_k}{\lambda_m^{1/2}} Q_l P_i G P_m  Q_k  \\
 & \phantom{AAAAAAAAAAAAAAAAAAAAAA}+ \frac{\kappa_l}{\lambda_i^{1/2}} Q_l  P_i G P_m Q_k \Big).
\end{align*}
Note that this can be simplified further as several of the terms are now of the same form.
However, in the end we will take the trace of this object which will lead to further reductions.
We will perform these steps at the same time.
The trace is a linear mapping so that with Lemma \ref{lem:FrechetChainrule} we  obtain
\[ \nn D_{(A,B)}  \left( \tr \circ \phi \right) \left[(G, G') \right]  = \tr  \left(D  \phi_{(A,B)}(G,G') \right). \]
Now use  that $\tr (A) = \sum_{i=1}^\dimension q_i^t A q_i$ for any operator $A$ (see Lemma \ref{lem:trace-formula}) with the eigenbasis $\{q_l: l =1,\dots, \dimension\}$ where $Q_l =q_l q_l^t$.
Then all of the terms containing $Q_l$ and $Q_k$ for $l\neq k$  vanish leaving us with
\begin{equation}
  D_{(A,B)}  \phi^{(2)} [(G, G')]  =D  \left( \tr \circ \phi \right)_{(A,B)} [(G, G')]  \label{eq:DPhi}  
\end{equation}
\begin{align*}
 & \quad = \tr G + \tr G'  -  \sum_{l=1}^\dimension \kappa_l^{1/2} \sum_{i=1}^\dimension  \lambda_i^{-1} q_l^t P_i G P_i q_l   - \sum_{l=1}^\dimension \kappa_l^{-1/2} q_l^t \sqrt{A} G' \sqrt{A} q_l  \\
 &  \qquad \qquad \qquad \qquad -  \sum_{l=1}^\dimension \kappa_l^{1/2} \sum_{\substack{ i, m=1\\ \lambda_i \neq \lambda_m}}^\dimension \left( \lambda_i  \lambda_m\right)^{-1/2}  q_l^t P_i G P_m  q_l.
\end{align*}
Adding this to (\ref{eq:pf:mean}) ends the proof due to Lemma \ref{lem:FrechetProjection}.
\end{proof}
%

%

In a next step we give the proof for the result on the second order differentiability.
\begin{proof}[Proof of Theorem \ref{p:Phi:diff:2}]
 We need to check that the first derivative $D\Phi$ obtained in Proposition \ref{p:Phi:diff} is Fr\'echet differentiable.
 Formally, by chain rule and linearity of the trace,
 \begin{align}
 \nonumber  D^2_{(\mu,\nu,A,B)}\Phi \left[ (g,g',G,G'),(g,g',G,G') \right] & = 2 \langle g- g', g-g' \rangle  \\
 \label{e.tr1}  &\ + \tr \left(D^2_{(A,B)} \Psi \left[(G,G'),(G,G') \right] \right), 
 \end{align}
where $\Psi(A,B) = (A^{1/2}BA^{1/2})^{1/2} = \psi (\psi(A)B\psi(A))$ with $\psi(C) = C^{1/2}$.
This formal derivation is valid as long as the last expression $D^2_{(A,B)} \Psi[(G,G'),(G,G')]$ exists.

First, let us note that $\psi: S_+(\Rdim) \to S_+(\Rdim)$ is twice Fr\'echet differentiable by Corollary \ref{cor:Second derivative}.  
Then the existence can be obtained from the chain rule in Lemma \ref{lem:FrechetChainrule}, more precisely:
\begin{align*}
 D^2_{(A,B)} & \Psi[(G,G'), (G,G') ]  = D^2_{A^{1/2}BA^{1/2}} \psi \left[ C,C \right] \\
  & + D_{A^{1/2}BA^{1/2}}\psi \left[D^2_{(A,B)}(\psi(A)B\psi(A))[(G,G'),(G,G')] \right] \, ,
\end{align*}
where $C = D_{(A,B)}(\psi(A)B\psi(A))[(G,G')]$ is used for abbreviation.
The objects in the first line are all well-defined since $\psi$ is twice Fr\'echet differentiable and 
$\psi(A)B\psi(A)=A^{1/2}BA^{1/2}$. Note that by Lemma \ref{lem:FrechetProductRule} 
the objects in the second line are also well-defined.
\begin{align*}
 D^2_{(A,B)} & (\psi(A)B\psi(A))[(G,G'),(G,G')] \\
 & = D^2_A \psi [G,G] B \psi(A) + 0 + \psi(A) B D^2_A \psi[G,G] \\
 & \quad + 2 D_A\psi[G]G' \psi(A)   + 2 D_A\psi[G]B D_A \psi[G]  +2 \psi(A) G' D_A\psi[G]  \, .
\end{align*}
This means that we have defined all elements in \eqref{e.tr1} rigorously, hence $\Phi$ is twice Fr\'echet differentiable.
\end{proof}
In the case $d=1$ (so $A,B$ are real-valued) we can explicitly calculate the second derivative:
\begin{equation}\label{e.tr3}\begin{split}
D^2 & \Phi_{(\mu,\nu,A,B)} [(g,g',G,G'),(g,g',G,G')] \\
&\quad =  2 (g- g')^2 + \frac{1}{2A^{1/2}B^{1/2}} \Big( \frac{B}{A}G^2 + \frac{A}{B}(G')^2 - 2GG' \Big) \, .
\end{split}
\end{equation}

\subsection{The Delta method and proof of Theorems \ref{T.WS.ASYMP} and \ref{T.WS.ASYMP:P=Q}}
\label{Sec:Deltamethod}
The goal of this section is to derive Theorems \ref{T.WS.ASYMP} and \ref{T.WS.ASYMP:P=Q} via the Delta method. 
More precisely, we will use the following result.

\begin{theorem}[Theorem 20.8 of \cite{vdV:98}, Delta Method]\label{thm:vdv:delta}
 Let $\phi: \bar{\DD} \subset \DD \to \GG$ be \Frechet{} differentiable at $\theta \in \bar{\DD}.$
 Let $(T_n)_{n \in \IN}$ and $T$ be random variables with values in  $\bar{\DD}$ and $\DD$ respectively  such that $r_n(T_n -\theta) \Rightarrow T$ for some sequence of numbers $r_n \to \infty.$ 
 Then 
 $$r_n(\phi(T_n)-\phi(\theta)) \Rightarrow D_\theta\phi[T].$$ 
If additionally, $D_\theta \phi = 0$ and $\phi$ is twice differentiable at $\theta$, then
$$ r_n^2(\phi(T_n)-\phi(\theta)) \Rightarrow \frac{1}{2} D_\theta^2\phi[T,T].$$ 
 \end{theorem}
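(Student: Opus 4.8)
The plan is to reduce both assertions to a Taylor expansion of $\phi$ at $\theta$ followed by Slutsky's lemma. Throughout, write $H_n := r_n(T_n - \theta)$, so $H_n \Rightarrow T$ by hypothesis; by the continuous mapping theorem $\|H_n\| \Rightarrow \|T\|$, hence $\|H_n\| = O_P(1)$, and therefore $T_n - \theta = r_n^{-1} H_n \xrightarrow{P} 0$. This is the only probabilistic input needed beyond the differentiability of $\phi$.

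For the first assertion, \Frechet{} differentiability at $\theta$ gives $\phi(\theta + h) - \phi(\theta) = D_\theta\phi[h] + R(h)$ with $\|R(h)\| = o(\|h\|)$ as $h \to 0$. Plugging in $h = T_n - \theta$ and multiplying by $r_n$,
\[
  r_n\bigl(\phi(T_n) - \phi(\theta)\bigr) = D_\theta\phi[H_n] + r_n R(T_n - \theta).
\]
Since $D_\theta\phi$ is a fixed continuous linear map, the continuous mapping theorem yields $D_\theta\phi[H_n] \Rightarrow D_\theta\phi[T]$. For the remainder, set $g(h) := \|R(h)\|/\|h\|$ for $h \neq 0$ and $g(0) := 0$; then $g$ is continuous at the origin with value $0$ there, so $g(T_n - \theta) \xrightarrow{P} 0$ because $T_n - \theta \xrightarrow{P} 0$. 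Since $\|r_n R(T_n - \theta)\| \le g(T_n - \theta)\,\|H_n\|$ with $\|H_n\| = O_P(1)$, we get $r_n R(T_n - \theta) = o_P(1)$, and Slutsky's lemma gives $r_n(\phi(T_n) - \phi(\theta)) \Rightarrow D_\theta\phi[T]$.

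For the second assertion, twice \Frechet{} differentiability at $\theta$ together with $D_\theta\phi = 0$ yields $\phi(\theta + h) - \phi(\theta) = \tfrac12 D_\theta^2\phi[h,h] + R_2(h)$ with $\|R_2(h)\| = o(\|h\|^2)$. Multiplying by $r_n^2$ with $h = T_n - \theta$,
\[
  r_n^2\bigl(\phi(T_n) - \phi(\theta)\bigr) = \tfrac12 D_\theta^2\phi[H_n, H_n] + r_n^2 R_2(T_n - \theta).
\]
The bilinear form $D_\theta^2\phi$ is bounded, so $h \mapsto D_\theta^2\phi[h,h]$ is continuous, and the continuous mapping theorem gives $\tfrac12 D_\theta^2\phi[H_n, H_n] \Rightarrow \tfrac12 D_\theta^2\phi[T,T]$. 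The remainder is controlled exactly as before: with $g_2(h) := \|R_2(h)\|/\|h\|^2$ for $h \neq 0$ and $g_2(0) := 0$ one has $\|r_n^2 R_2(T_n - \theta)\| \le g_2(T_n - \theta)\,\|H_n\|^2 = o_P(1)\cdot O_P(1)$, and Slutsky's lemma finishes the proof.

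The only genuinely delicate point is the bookkeeping of the remainder near $h = 0$: one must define $g$ (and $g_2$) to be $0$ at the origin and check continuity there, so that the event $\{T_n = \theta\}$ causes no division-by-zero trouble and the composition $g(T_n - \theta)$ with the $o_P(1)$ quantity $T_n - \theta$ is legitimate. Everything else is a routine combination of tightness, the ordinary continuous mapping theorem, and Slutsky's lemma.
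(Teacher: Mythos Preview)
Your proof is correct and is exactly the standard Taylor-expansion-plus-Slutsky argument. The paper does not actually give its own proof of this theorem: it cites Theorem~20.8 of \cite{vdV:98} for the first-order statement and, in the remark immediately following, notes that the second-order statement ``is quite the same using an expansion to a higher order'' --- which is precisely what you carry out.
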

\begin{remark}
In \cite{vdV:98} the result is stated in more generality, in particular for Hadamard differentiable functions. Since we essentially work in finite dimensions this difference does not matter. The statement on second derivatives is not included in Theorem 20.8 of \cite{vdV:98}. However, the proof is quite the same using an expansion to a higher order, see Section 20.1.1 of \cite{vdV:98} as well as Theorem \ref{thm:ghjr} of Appendix \ref{s.2nd.order}.
\end{remark}

In order to apply this result we will use known weak convergence results of the empirical means and covariance matrices of Gaussian distributions to their true means and covariance matrices. The  representation in \eqref{eq:was:dist:Gauss}, whose \Frechet{} derivative was calculated in Proposition \ref{p:Phi:diff} (see also Corollary \ref{p:Phi:diff-onesample}), then provides the mapping from mean and covariance matrices  to the 2-Wasserstein distance \eqref{p-Wasserstein} of Gaussian distributions.

For this we now return to the setting of Theorem \ref{T.WS.ASYMP} such that $\IP$ and  $\IQ$ in $\mathcal{M}_1(\IR^d)$ are Gaussian distributions on $\R^{\dimension}$ and $X_i\sim \IP$ are i.i.d.~and independent from $Y_i\sim \IQ$ i.i.d.~for $i \in \IN.$ A central limit theorem for the respective sample means and covariance matrices of a sample of size $n,$
\begin{align}
\hat{\mu}_n &= \frac{1}{n} \sum_{i=1}^n X_i, \qquad \hat{\Sigma}_n = \frac{1}{n-1} \sum_{i=1}^n (X_i-\hat{\mu}_n)(X_i-\hat{\mu}_n)^t, \\
\hat{\nu}_n &= \frac{1}{n} \sum_{i=1}^n Y_i, \qquad \hat{\Xi}_n = \frac{1}{n-1} \sum_{i=1}^n (Y_i-\hat{\nu}_n)(Y_i-\hat{\nu}_n)^t
\end{align}
 is well known.
\begin{lemma}[Section 3 in \cite{RY:97}]\label{lem:anderson}
  If $\IP = N(\mu,\Sigma)$ then
  \begin{equation}
   \sqrt{n} ( \hat{\mu}_n - \mu, \, \hat{\Sigma}_n - \Sigma ) \Rightarrow g \otimes G
  \end{equation}
where $g \sim N(0, \Sigma)$ and $G = \Sigma^{1/2}H\Sigma^{1/2}.$ Here, convergence in the space $\Rdim \times \R^{\dimension \times \dimension}$ is understood component wise and $H= (H_{ij})_{i,j\leq \dimension}$ is a $\dimension \times \dimension$ symmetric random matrix with independent (upper triangular) entries and
\begin{equation}
 H_{ij} \sim \begin{cases}
            N(0,1) \ & ,\, i < j ,\\
            N(0,2) \ & ,\, i = j .
          \end{cases}
\end{equation}
\end{lemma}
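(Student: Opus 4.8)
The plan is to establish the central limit theorem for the pair $(\hat\mu_n,\hat\Sigma_n)$ by treating the mean and the covariance separately and then combining them, using the independence structure that holds for Gaussian samples. First I would recall that for $X_i\sim N(\mu,\Sigma)$ i.i.d., the sample mean $\hat\mu_n$ is $N(\mu,\Sigma/n)$ exactly, so $\sqrt{n}(\hat\mu_n-\mu)\sim N(0,\Sigma)$ for every $n$; in particular this marginal convergence is trivial. For the covariance part, I would write $X_i-\hat\mu_n$ in terms of the centered variables $Z_i:=\Sigma^{-1/2}(X_i-\mu)\sim N(0,\mathds{1})$, so that $\hat\Sigma_n=\Sigma^{1/2}\big(\tfrac{1}{n-1}\sum_{i=1}^n(Z_i-\bar Z_n)(Z_i-\bar Z_n)^t\big)\Sigma^{1/2}$. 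It therefore suffices to prove the statement for $\Sigma=\mathds{1}$, i.e.\ that $\sqrt{n}(\hat S_n-\mathds{1})\Rightarrow H$ with $\hat S_n$ the sample covariance of standard normals and $H$ as described, and then conjugate by $\Sigma^{1/2}$ on both sides.

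For the standardized case I would argue as follows. Write $\hat S_n=\tfrac{1}{n-1}\sum_{i=1}^n Z_iZ_i^t-\tfrac{n}{n-1}\bar Z_n\bar Z_n^t$. The term $\tfrac{n}{n-1}\bar Z_n\bar Z_n^t$ is $O_P(1/n)$ since $\sqrt{n}\bar Z_n=O_P(1)$, hence $\sqrt{n}$ times it is $O_P(1/\sqrt n)\to 0$; so it is asymptotically negligible and can be dropped by Slutsky. The factor $n/(n-1)\to 1$ is likewise harmless. Thus the limit of $\sqrt{n}(\hat S_n-\mathds{1})$ is the same as that of $\tfrac{1}{\sqrt n}\sum_{i=1}^n(Z_iZ_i^t-\mathds{1})$. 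Now apply the multivariate CLT to the i.i.d.\ symmetric-matrix-valued (equivalently $\R^{d(d+1)/2}$-valued) random variables $W_i:=Z_iZ_i^t-\mathds{1}$, which have mean zero and finite second moments. The limit is the centered Gaussian matrix $H$ whose covariance structure is computed from the moments of standard normals: using $\mathbb E[Z_aZ_bZ_cZ_d]=\delta_{ab}\delta_{cd}+\delta_{ac}\delta_{bd}+\delta_{ad}\delta_{bc}$ one gets $\mathrm{Cov}((W_i)_{ab},(W_i)_{cd})=\delta_{ac}\delta_{bd}+\delta_{ad}\delta_{bc}$, which is exactly the covariance of a symmetric matrix with independent upper-triangular entries $H_{ij}\sim N(0,1)$ for $i<j$ and $H_{ii}\sim N(0,2)$ (and, as a sanity check, the off-diagonal variance is $1$ while the diagonal variance is $2$, matching the stated law). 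Conjugating, $G=\Sigma^{1/2}H\Sigma^{1/2}$ is the limit of $\sqrt n(\hat\Sigma_n-\Sigma)$.

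It remains to handle the joint convergence of $(\hat\mu_n,\hat\Sigma_n)$ as a pair, and here the key classical fact is that for Gaussian samples $\hat\mu_n$ and $\hat\Sigma_n$ are independent (Cochran/the standard decomposition of the sample into the mean direction and its orthogonal complement). Since the two marginals converge and the pair is independent at every finite $n$, the joint law converges to the product of the marginal limits, i.e.\ to $g\otimes G$ with $g\sim N(0,\Sigma)$ independent of $H$; this is what the notation $g\otimes G$ in the statement records. I do not expect any genuine obstacle: the only mild care needed is in verifying that the limiting covariance of $H$ has precisely the claimed form (the factor $2$ on the diagonal versus $1$ off-diagonal, and independence of the upper-triangular entries), and in making the negligibility argument for the $\bar Z_n\bar Z_n^t$ correction term precise via Slutsky. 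Since the lemma is cited from Section 3 of \cite{RY:97}, one may alternatively simply invoke that reference; the sketch above indicates the proof for completeness.
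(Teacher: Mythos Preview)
Your argument is correct and is the standard textbook derivation: reduce to the isotropic case by conjugation, apply the multivariate CLT to the vectorized $Z_iZ_i^t-\mathds{1}$, read off the covariance via the fourth-moment identity, and combine with the sample mean using the classical independence of $\hat\mu_n$ and $\hat\Sigma_n$ under Gaussianity. The paper itself does not give a proof of this lemma at all; it simply cites Section~3 of \cite{RY:97} and remarks that the extension from the centered to the non-centered case is immediate --- exactly the option you mention in your final sentence.
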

The derivation in \cite{RY:97} is only given for the centered case, but (as they also say) it can easily be obtained in the non-centered case.

\bigskip


\smallskip

\noindent
{\bf Main lines of the proof of Theorem \ref{T.WS.ASYMP}:}\\
First, note that due to $\Sigma$ and $\Xi$ having full rank, all eigenvalues are positive.
Consider $\Phi$ as in Proposition \ref{p:Phi:diff}, $\mathbb{D} = \R^{2d} \times \R^{2(d \times d)}, \, \GG = \R$ and $\bar{\DD} = \R^{2d} \times S_{+}(\Rdim)^2.$
For $r_n = \sqrt{mn/(m+n)}$ and $T_n (x_1,\dotsc, x_n, y_1, \dotsc, y_m) = (\hat{\mu}_n,\hat{\nu}_m,\hat{\Sigma}_n,\hat{\Xi}_m)$ we obtain with the help of Lemma \ref{lem:anderson},
\begin{equation}\label{eq:CLT} \begin{split} r_n  & \left(T_n (X_1,\dotsc, X_n,Y_1,\dotsc Y_m) - (\mu,\nu,\Sigma, \Xi) \right) \\
 & \Rightarrow \left( (1-a)^{1/2}g,a^{1/2}g',(1-a)^{1/2}G,a^{1/2}G' \right) \text{ as } n \to \infty 
 \end{split}\end{equation}
with $g \sim N(0,\Sigma),\, g' \sim N(0,\Xi)$ and $G = \sqrt{\Sigma}H\sqrt{\Sigma},\, G' = \sqrt{\Xi}H'\sqrt{\Xi}$ all independent of each other.
The symmetric Gaussian matrices $H$,$H'$ have independent Gaussian entries in the upper triangle with mean $0$ and variance $1$ off-diagonal and variance $2$ on the diagonal.
We can now apply Theorem~\ref{thm:vdv:delta} in order to obtain
 \begin{equation}
 \label{Delta_to_Phi} \begin{split}
  r_n & \left( \Phi(\hat{\mu}_n,\hat{\nu}_n,\hat{\Sigma}_n,\hat{\Xi}_n) - \wass_2^2(\IP,\QQ) \right) \\
   & \Rightarrow D_{(\mu,\nu,\Sigma,\Xi)}\Phi \left[ ((1-a)^{1/2}g,a^{1/2}g',(1-a)^{1/2}G,a^{1/2}G') \right] \, .
 \end{split} \end{equation}
Since $(g,g',G,G')$ is a Gaussian vector with mean $0$ and $D\Phi$ is a linear mapping to $\R$ we know that $D_{(\mu,\nu,\Sigma,\Xi)}\Phi (((1-a)^{1/2}g,a^{1/2}g',(1-a)^{1/2}G,a^{1/2}G') )$ is a real-valued Gaussian variable with mean $0$ and a certain variance $\varpi$.
This shows (\ref{eq:twosample.m.n}).
The calculation of $\varpi$ leading to (\ref{eq:vartwosample.m.n}) is provided in Section~\ref{Sec:variance}.\hfill \qed \\

In the one sample case (\ref{eq:onesample}), i.e.~Theorem~\ref{T.WS.ASYMP.ONESAMPLE} the proof is entirely analogous but essentially simpler: We use 
 Theorem~\ref{thm:vdv:delta} and Lemma~\ref{lem:anderson} as before in order to obtain
 \begin{align}
 \label{Delta_to_Phi^}
  \sqrt{n} ( \Phi(\hat{\mu}_n,\nu,\hat{\Sigma}_n,\Xi) - \wass_2^2(\IP,\QQ) ) \Rightarrow D_{(\mu,\Sigma)}\Phi^{(\nu,\Xi)} [(g,G)] \, 
 \end{align}
 where the derivative is specified in Corollary \ref{p:Phi:diff-onesample}. Again, the limit is mean $0$ Gaussian and the calculation of the variance in (\ref{eq:varonesample}) is given at the end of Section \ref{Sec:variance}.

\medskip 
\begin{remark}\label{r.tr1}
A final remark is to say something about the case when $\Sigma$ or $\Xi$ do not have full rank in Theorem \ref{T.WS.ASYMP}.
Simulations show that still a very similar result should hold.
However, our technique (delta method, i.e.~differentiation) will not work, as can already be seen in the case $d=1$. Loosely speaking the derivative of the variance part in \eqref{eq:was:dist:Gauss} where $\lambda \geq 0$ is an eigenvalue is $\approx \lambda^{-1/2}$ (not being well-defined for $\lambda =0$) which gets multiplied by the direction $\approx \lambda$ (see \eqref{Delta_to_Phi}) yielding $\approx \lambda^{1/2}$ in the end.
\end{remark}

\bigskip

{\bf Proof of Theorem \ref{T.WS.ASYMP:P=Q}:}\\
Let $\Phi$ as in Proposition \ref{p:Phi:diff}.
Note that $\Phi((\mu,\mu,A,A)) = 0$ and the  proposition easily implies that $D_{(\mu,\mu,A,A)} \Phi = 0$. 
Additionally, Proposition~\ref{p:Phi:diff:2} says that the function $\Phi$ is twice \Frechet{} differentiable at the point $(\mu,\mu,A,A)$ and thus we can apply the second part of Theorem \ref{thm:vdv:delta}.
This allows to deduce that
\begin{equation}\label{e.tr2} \begin{split}
n & \left(\Phi(\hat{\mu}_n^{(1)},\hat{\mu}_n^{(2)},\hat{\Sigma}_n^{(1)},\hat{\Sigma}_n^{(2)}) - 0 \right) \\
& \qquad \Rightarrow  D_{(\mu,\mu,\Sigma,\Sigma)}^2 \Phi \left[(g,g',G,G'),(g,g',G,G') \right] \, ,
\end{split} \end{equation}
where $g \sim N(0,\Sigma), G = \sqrt{\Sigma}H\sqrt{\Sigma}$, $g' \sim N(0,\Sigma), G' = \sqrt{\Sigma}H'\sqrt{\Sigma}$ are all independent of each other and as in Lemma \ref{lem:anderson}.
Since $D^2 \Phi$ is a quadratic form and the vector $(g,g',G,G')$ is Gaussian we obtain the desired result. \hfill \qed

\subsection{Variance formula for the limiting Gaussian distributions}\label{sec:VarianceCalc}
\label{Sec:variance}
In this section we provide the details of calculating the variance of the derivative $D_{(\mu,\nu,\Sigma,\Xi)}\Phi (((1-a)^{1/2}g,a^{1/2}g',(1-a)^{1/2}G,a^{1/2}G'))$ in (\ref{Delta_to_Phi}) whose explicit form is given in (\ref{eq:D}) of Proposition \ref{p:Phi:diff}. 
The variance formula for  $D_{(\mu,\Sigma)}\Phi^{(\nu,\Xi)} ((g,G))$ of (\ref{Delta_to_Phi^}) specified in Corollary \ref{p:Phi:diff-onesample} then follows in a similar way with the the calculation in \eqref{e.upsilon} below.

The first two terms of the representation (\ref{eq:D}) involving the means $\mu$ and $\nu$ are easily calculated, namely 
\begin{equation}
\label{meansum}
 \begin{split} & (\mu-\nu)(1-a)^{1/2}g \sim N\left( 0, (1-a)(\mu-\nu)^t\Sigma (\mu-\nu) \right), \\
  & (\mu-\nu)a^{1/2}g' \sim N\left( 0, a(\mu-\nu)^t\Xi (\mu-\nu) \right) \, .
 \end{split}
\end{equation}

The explicit calculation of the remaining terms involving the covariance matrices $\Sigma$ and $\Xi$  is more complicated.
We will frequently apply Lemma \ref{lem:EH}.
%
In the following use the eigendecomposition of $A$ and  $A^{1/2}BA^{1/2}$ given in \eqref{eq:lambda,kappa}.
Let $G = A^{1/2}HA^{1/2}$, where $H$ is as in Lemma \ref{lem:anderson}.
Then since $AP_i = P_iA=\lambda_i P_i$, $1\leq i \leq d$ and $\sum_{i=1}^d P_i = \mathbb{I}$ the terms in (\ref{eq:D}) that involve $G$ are given by
\begin{align}
\nonumber
 & \tr (G) - \sum_{l=1}^d \kappa_l^{1/2} \sum_{i=1}^d \lambda_i^{-1} q_l^t P_i G P_i q_l  \\
 \nonumber    & \phantom{AAAAAAAAA} - \sum_{l=1}^d  \kappa_l^{1/2} \sum_{i,m=1, \lambda_i \neq \lambda_m}^d (\lambda_i \lambda_m)^{-1/2} q_l^t P_i G P_m q_l \\
 \nonumber
 & \ = \tr(AH) - \sum_{l=1}^d \kappa_l^{1/2} \sum_{i=1}^d q_l^t P_i H P_i q_l - \sum_{l=1}^d \kappa_l^{1/2}q_l^t\sum_{i=1}^d P_i H \sum_{m=1, \lambda_i \neq \lambda_m}^d P_m q_l \\
 \nonumber
 & \ = \tr(AH) - \sum_{l=1}^d \kappa_l^{1/2}q_l^t\sum_{i=1}^d P_i H \left( P_i + \sum_{m=1, \lambda_i \neq \lambda_m}^d P_m \right) q_l \\
 & \label{eq:Gpart}
  \ = \tr(AH) - \sum_{l=1}^d \kappa_l^{1/2}q_l^t\sum_{i=1}^d P_i H \tilde{P}_i q_l \, ,
\end{align}
where in the last line we have used the notation
\begin{equation}
\label{tildeP}
 \tilde{P}_i =  P_i + \sum_{m=1, \lambda_i \neq \lambda_m}^d P_m 
\end{equation}
to denote the projection onto the direction corresponding to $\lambda_i$ as well as on all other directions that have eigenvalues different from $\lambda_i.$
For future use we note that  $\tilde{P}_i$ is again a projection due to the orthogonality of the $P_i, i=1, \dots, d$, meaning that
\begin{align}
\tilde{P}_i \tilde{P}_j = \delta_{ij}\tilde{P}_i \, .
\end{align}
Furthermore $\tilde{P}_i$ is symmetric.
With this we can calculate the second moment and thus the variance  of the centered Gaussian of (\ref{eq:Gpart}).
\begin{eqnarray}
\nonumber
 & & \IE\left[ \left(\tr(AH) - \sum_{l=1}^d \kappa_l^{1/2}q_l^t\sum_{i=1}^d P_i H \tilde{P}_i q_l \right)^2 \right] \\
 \label{G1}
 &&\phantom{AAAAAAAAAAAA}= \sum_{i,j=1}^d \IE \left[ p_i^t AHp_ip_j^t AH p_j \right] \\
  \label{G2}
& &  \phantom{AAAAAAAAAAAAA}+ \sum_{l,k=1}^d \kappa_l^{1/2}\kappa_k^{1/2} \sum_{i,j=1}^d \IE \left[q_l^tP_i H \tilde{P}_i q_l q_k^t P_j  H \tilde{P}_i q_k\right] \\
 \label{G3}
 & & \phantom{AAAAAAAAAAAAA} -  2\sum_{j,l=1}^d \kappa_l^{1/2} \sum_{i=1}^d \IE \left[q_i^t AH q_i q_l^t P_j H \tilde{P}_j q_l  \right] \, .
 \end{eqnarray}
 We consider these three terms separately and start with (\ref{G1}). Using Lemma \ref{lem:EH} and $p_i^tp_j = \delta_{ij}$ the first line \eqref{G1} simplifies to
 \begin{align*}
  &\sum_{i,j=1}^d \lambda_i \lambda_j \left[p_i^t(p_ip_j^t)^t p_j+ (p_i^tp_j) \tr(p_i p_j^t) \right] 
  = \sum_{i=1}^d \lambda_i^2 \left[1 +  \tr(p_i p_i^t) \right] = 2 \sum_{i=1}^d \lambda_i^2.
 \end{align*}
 Also with Lemma \ref{lem:EH} we obtain for (\ref{G2})
  \begin{align*}
 &\sum_{k,l=1}^d \kappa_l^{1/2} \kappa_k^{1/2} \sum_{i,j=1}^d \Bigg[ q_l^tP_iP_j q_k q_l^t\tilde{P}_i \tilde{P}_j q_k + (q_l^tP_i \delta_{ij} q_k + q_l^tP_i \1_{\lambda_i \neq \lambda_j} q_k ) \\
 & \phantom{aaaaaaaaaaaaaaaaaaaaaaaa} \bigg( p_i^tq_lq_k^tp_i \delta_{ij}  + \sum_{m=1,\lambda_i \neq \lambda_m}^d p_m^t q_l q_k^t p_m \1_{j=m} \bigg) \Bigg] \\
&  \stackrel{ q_l^tq_k =\delta_{kl}}{=} \sum_{k,l=1}^d \kappa_l^{1/2} \kappa_k^{1/2} \sum_{i=1}^d q_l^tP_iq_k q_l^t\tilde{P}_i q_k  + \sum_{k,l=1}^d \kappa_l^{1/2} \kappa_k^{1/2} \sum_{i=1}^d q_l^t P_i q_k p_i^t q_l q_k^t p_i 
\end{align*}
\begin{align*}
  & \quad \quad+ \sum_{m=1,\lambda_i\neq \lambda_m}^d q_l^t P_i q_k p_j^t q_l q_k^t p_m \1_{j=m,i=j}  \\
  & \quad \quad
   + \sum_{k,l=1}^d \kappa_l^{1/2} \kappa_k^{1/2} \sum_{i=1}^d \sum_{j=1,\lambda_i\neq \lambda_j}^d q_l^tP_i q_k p_i^tq_l q_k^t p_i + q_l^t P_i q_k p_j^t q_l q_k p_j \\
   &= 2 \sum_{k,l=1}^d \kappa_l^{1/2} \kappa_k^{1/2} \left( \sum_{i=1}^d  (q_l^tP_iq_k)^2 + \sum_{j=1,\lambda_i\neq \lambda_j}^d q_l^tP_i q_k q_l^tP_jq_k \right)  \, .
 \end{align*}
 Similarly, for (\ref{G3}) we get
 \begin{align*}
 & \quad  - 2\sum_{j,l=1}^d \kappa_l^{1/2} \sum_{i=1}^d \left[ q_i^t A P_j q_l q_i^t \tilde{P}_j q_l + q_i^tA\tilde{P}_i q_l \sum_{r=1}^d q_r^tq_iq_l^t P_j q_r  \right] \\
  & \quad  - 2 \sum_{j,l=1}^d \kappa_l^{1/2}  \lambda_j q_l^tP_j P_j q_l + \lambda_j q_l^t P_j  \sum_{m=1,\lambda_j \neq \lambda_m}^d P_m q_l \quad (\text{since} \sum_{i=1}^d q_i q_i^t = 1) \\
 & \quad - 2 \sum_{j,l=1}^d \kappa_l^{1/2}  \lambda_j q_l^tP_jP_j q_l + \lambda_jq_l^t P_j  \sum_{m=1,\lambda_j \neq \lambda_m}  P_m q_l \\
 &= - 4\sum_{l=1}^d \kappa_l^{1/2}  ( q_l^t A q_l + 0 )\, .
 \end{align*}
 By putting (\ref{G1}) to (\ref{G3}) back together and adding the factor $(1-a)$, since in \eqref{e.tr2} we are dealing with $(1-a)^{1/2}G$ instead of $G$ we finally obtain
\begin{align}
\label{Gsum}
 &\quad  \IE\left[ \left((1-a)^{1/2}\tr(AH) - \sum_{l=1}^d \kappa_l^{1/2}q_l^t\sum_{i=1}^d P_i (1-a)^{1/2}H \tilde{P}_i q_l \right)^2 \right] \\
 \nonumber
 & = 2 (1-a)\tr (A^2) + 2 (1-a) \sum_{k,l=1}^d \kappa_l^{1/2} \kappa_k^{1/2} \sum_{i=1}^d q_l^tP_iq_k q_l^t \tilde{P}_i q_k  \\
 & \phantom{AAAAAAAAAAAAAAAAAAAAAAA}    -4 (1-a) \sum_{l=1}^d \kappa_l^{1/2} q_l^t A q_l \, .
\end{align}
%
%
%
We can do a similar calculation for the variance related to $G' = B^{1/2}H'B^{1/2}:$
\begin{align}
\nonumber
 \IE & \left[ \left( \tr (G') - \sum_{l=1}^d \kappa_l^{-1/2} q_l^t A^{1/2} G' A^{1/2} q_l \right)^2 \right]
\\
 \label{G'1}
  &= \sum_{k,l=1}^d  \Big(\IE \left[ q_l^tBH'q_lq_k^tBH'q_k \right]  
  \end{align}
\begin{align}
   \label{G'2}
  &\qquad +  \IE \left[\kappa_l^{-1/2}\kappa_k^{-1/2} q_l^t A^{1/2} B^{1/2} H' B^{1/2}A^{1/2} q_l q_k^t A^{1/2}B^{1/2}H' B^{1/2}A^{1/2}q_k \right] \\
   \label{G'3}
 &\qquad - 2 \IE \left[ \kappa_l^{-1/2}q_k^t BH'q_kq_l^tA^{1/2}B^{1/2}H'B^{1/2}A^{1/2}q_l \right] \Big).
 \end{align}
Here, the first term in (\ref{G'1}) simplifies with the help of Lemma \ref{lem:EH} and Lemma \ref{lem:ev(AB)} as well as  $q_l^tq_k = \delta_{kl}$ and  $\tr(q_lq_k^tB) =  q_k^tBq_l$ to 
 \begin{align*}
 \sum_{k,l=1}^d \left( q_l^tB(q_l q_k^tB)^tq_k + q_l^tBq_k \tr(q_lq_k^tB)  \right) 
 &= \sum_{l=1}^d q_l^tB^2 q_l + \sum_{k,l=1}^d  q_l^tBq_k q_k^tB q_l  \\
 &= 2 \sum_{l=1}^d q_l^tB^2 q_l  =2 \tr (B^2).
 \end{align*}
 Using Lemma \ref{lem:EH} and the fact that $\kappa_i$ and $q_i$ are the eigenvalues and orthonormal eigenvectors of $A^{1/2}BA^{1/2}$ the second term in  (\ref{G'2}) reduces to
  \begin{align*}
&  \sum_{k,l=1}^d \kappa_l^{-1/2} \kappa_k^{-1/2} \left( q_l^t A^{1/2}B^{1/2} \left(B^{1/2}A^{1/2}q_lq_k^tA^{1/2}B^{1/2} \right)^tB^{1/2}A^{1/2} q_k \right.\\
 & \phantom{AAAAAAAAAAA}  \left. + q_l^tA^{1/2}B^{1/2}B^{1/2}A^{1/2}q_k \tr(B^{1/2}A^{1/2}q_lq_k^tA^{1/2}B^{1/2} ) \right) \\
 =& \sum_{k,l=1}^d  \kappa_l^{-1/2} \kappa_k^{-1/2} \left( q_l^t A^{1/2}BA^{1/2}q_kq_l^tA^{1/2}BA^{1/2} q_k \right. \\
 & \phantom{AAAAAAAAAAA}   \left. + q_l^tA^{1/2}BA^{1/2}q_k \tr \left(q_k^tA^{1/2}B^{1/2}B^{1/2}A^{1/2}q_l \right) \right)  \\
 =&  \sum_{k,l=1}^d  \kappa_l^{1/2} \kappa_k^{1/2} \left( q_l^tq_kq_l^tq_k + q_l^tq_kq_k^tq_l \right) =2\sum_{k=1}^d \kappa_k= 2 \tr(AB). 
  \end{align*}
Finally, with Lemmas \ref{lem:EH} and \ref{lem:trace-formula} the third term in (\ref{G'3}) leads to 
\begin{align*}
& -  2 \sum_{k,l=1}^d\kappa_l^{-1/2} \big( q_k^tB(q_kq_l^tA^{1/2}B^{1/2} )^t B^{1/2}A^{1/2}q_l \\
 &\phantom{AAAAAAAAAAAAAA} + q_k^t B B^{1/2}A^{1/2}q_l \tr(q_kq_l^tA^{1/2}B^{1/2}) \big) \\
=&- 2\sum_{k,l=1}^d \kappa_l^{-1/2} \left( q_k^tB^{1/2}A^{-1/2}A^{1/2}BA^{1/2}q_l q_k^t B^{1/2}A^{1/2}q_l \right. \\
&  \phantom{AAAAAAAAAAAAAA} \left. +q_k^t  B^{1/2}A^{-1/2}A^{1/2}BA^{1/2}q_l q_l^tA^{1/2}B^{1/2}q_k \right) \\
=&  - 2\sum_{k,l=1}^d \kappa_l^{1/2} \left( q_k^tB^{1/2}A^{-1/2}q_l q_k^t B^{1/2}A^{1/2}q_l + q_k^t  B^{1/2}A^{-1/2}q_l q_l^tA^{1/2}B^{1/2}q_k \right) \\
=& - 4 \sum_{l=1}^d \kappa_l^{1/2} q_l^t A^{-1/2}B A^{1/2} q_l .
 \end{align*}
Thus, we obtain from the simplifications of (\ref{G'1}) to (\ref{G'3}) and using the factor $a$ from \eqref{e.tr2},
\begin{align}
\label{G'sum}
 \IE & \left[ \left( a^{1/2}\tr (G') - a^{1/2}\sum_{l=1}^d \kappa_l^{-1/2} q_l^t A^{1/2} G' A^{1/2} q_l \right)^2 \right]  \\
 \nonumber
 &= 2 a  \tr (B^2) +2a \tr(AB) - 4 a\sum_{l=1}^d \kappa_l^{1/2} q_l^t A^{-1/2}B A^{1/2} q_l .
\end{align}
Finally, from (\ref{meansum}), (\ref{Gsum}) and (\ref{G'sum}) (now replacing $A$ and $B$ by $\Sigma$ and $\Xi$) as well as the independence of $g,g', G,G'$ we obtain that the variance  in (\ref{Delta_to_Phi}) of the random variable $D_{(\mu,\nu,\Sigma,\Xi)}\Phi ((g,g',G,G'))$ is given by 
\begin{align*}
&(\mu-\nu)^t ((1-a)\Sigma + a\Xi) (\mu-\nu) +2 (1-a)\tr (\Sigma^2)\\
& + 2 (1-a) \sum_{k,l=1}^d \kappa_l^{1/2} \kappa_k^{1/2} \sum_{i=1}^d q_l^tP_iq_k q_l^t \tilde{P}_i q_k  
   -4 (1-a)\sum_{l=1}^d \kappa_l^{1/2} q_l^t \Sigma q_l\\
&+2 a \tr (\Xi^2) +2 a \tr(\Sigma \Xi) - 4 a \sum_{l=1}^d \kappa_l^{1/2} q_l^t \Sigma^{-1/2}\Xi \Sigma^{1/2} q_l.
\end{align*}
If all eigenvalues are distinct we have that $\tilde{P}_i=I, i = 1,\dots d$ and therefore using $q_l^t q_k=\delta_{kl}$ it also follows that 
\begin{align*}
&\sum_{k,l=1}^d \kappa_l^{1/2} \kappa_k^{1/2} \sum_{i=1}^d q_l^tP_iq_k q_l^t \tilde{P}_i q_k  
= \sum_{l=1}^d \kappa_l \sum_{i=1}^d q_l^tP_iq_l \\
&=\sum_{l=1}^d \kappa_l\sum_{i=1}^d q_l^tp_i p_i^tq_l  =\sum_{l=1}^d \kappa_l\sum_{i=1}^d p_i^tq_l  q_l^tp_i 
= \sum_{l=1}^d \kappa_l \tr{Q_l} = \tr(\Sigma \Xi).
\end{align*}
Thus, in this case the expression for the variance reduces to 
\begin{align*}
&(\mu-\nu)^t ((1-a)\Sigma + a\Xi) (\mu-\nu) +2 \tr ((1-a)\Sigma^2 + a\Xi^2)+2 \tr(\Sigma \Xi)\\
&\qquad \qquad \qquad \qquad \qquad    -4 \sum_{l=1}^d \kappa_l^{1/2} q_l^t( (1-a)\Sigma + a\Sigma^{-1/2}\Xi \Sigma^{1/2}) q_l.
\end{align*}
We have chosen to also use this representation for the general case together with the fact that  by (\ref{tildeP}),
\begin{equation}
 \tilde{P}_i =  I - \sum_{\stackrel{j=1}{j \neq i, \lambda_i = \lambda_j}}^d P_j. 
\end{equation}
This yields (\ref{eq:vartwosample.m.n}) of Theorem \ref{T.WS.ASYMP}.

To obtain (\ref{eq:varonesample}) of Theorem \ref{T.WS.ASYMP.ONESAMPLE} we need to be careful.
Recall that in Corollary \ref{p:Phi:diff-onesample} the derivative is given with the terms of $(\mu,A)$ and $(\nu,B)$ being reversed.
So we need to follow the previous calculation for $(g,G)=0$ and reverse the roles of $(\mu,\Sigma)$ and $(\nu,\Xi)$ finally, i.e.~set $A= \Xi$ and $B=\Sigma$.
So we only obtain the second term in \eqref{meansum} and the terms in \eqref{G'sum} for $a=1$.
\begin{align}\label{e.upsilon}
 \upsilon^2 = (\nu-\mu)^t\Sigma (\nu-\mu) +  2 \tr (\Sigma^2) +2 \tr(\Xi \Sigma) - 4 \sum_{l=1}^d \kappa_l^{1/2} r_l^t \Xi^{-1/2}\Sigma \Xi^{1/2} r_l \, .
\end{align}
Here, $\{(\kappa_l,r_l):\, l=1,\dotsc, d\}$ is the eigendecomposition of $\Xi^{1/2}\Sigma \Xi^{1/2}$.

\appendix
\renewcommand*{\thesection}{\Alph{section}}

\section{Functional derivatives: A reminder}\label{s.Functional.deriv}

We start by collecting some basic facts on \Frechet{} differentiability in an abstract setting. Let 
$\DD$ and $\GG$ be normed linear spaces, $\bar{\DD}\subset \DD$ open and $\phi:\bar{\DD} \to \GG$. The function $\phi$  is \emph{\Frechet{} differentiable} at $\theta \in \bar{\DD}$ if there exists a continuous, linear map $D_{\theta}\phi:\DD \to \GG$ such that as $||h||_{\DD}\rightarrow 0,$
 \[ 
  \frac{1}{||h||_{\DD}}||(\phi(\theta + h)- \phi(\theta)) -D_{\theta}\phi[h]||_{\GG} \rightarrow 0.
  \]
%

This concept also extends to higher order derivatives. E.g.~for the second derivative in the setting above, the mapping $D_{\cdot}:\,\bar{\DD} \to L(\DD,\GG)$ is asked to be \Frechet{} differentiable; here $L(\DD,\GG)$ denotes the space of continuous linear mappings from $\DD \to \GG$. Since the second derivative is a bilinear form it suffices to define it on the diagonal elements.
In the following we collect a number of calculation rules for \Frechet{} derivatives that will be used frequently later on. References for the results are \cite[Section 3.9]{vdVW:96}, Section 3 in \cite{Cheney:01} or the classical sources \cite{D:69} and \cite{AS:67} for a general overview.
First, if $(\GG,\cdot)$ is a Banach algebra then a product rule holds.
\begin{lemma}[Product rule]
\label{lem:FrechetProductRule}
 Suppose that $\phi: \bar{\DD}\subset \DD \to \GG$, $\psi:\bar{\DD}\subset \DD \to \GG$ are \Frechet{} differentiable. 
 Then their product $\phi\cdot \psi : \bar{\DD} \subset \DD \to \GG$ is also \Frechet{} differentiable in $\bar{\DD}$ and 
 $$D_{\theta}(\phi\cdot \psi)[h] = D_{\theta}\phi[h]  \cdot  \psi(\theta) +   \phi(\theta)  \cdot D_{\theta}\psi[h] , \qquad h \in \bar{\DD}.$$
 Additionally, if $\phi: \bar{\DD}\subset \DD \to \GG$, $\psi:\bar{\DD}\subset \DD \to \GG$ are twice \Frechet{}-differentiable, then its product $\phi\cdot \psi : \bar{\DD} \subset \DD \to \GG$ is also twice \Frechet{}-differentiable in $\bar{\DD}$ and for $\theta, h \in \bar{\DD},$
 $$D^2_{\theta}(\phi \cdot \psi)[h,h] 
 = D^2_{\theta}\phi[h,h] \cdot \psi(\theta) +2 D_{\theta}\phi[h] \cdot D_{\theta}\psi[h] +  \phi(\theta) \cdot D^2_{\theta}\psi[h,h].$$
\end{lemma}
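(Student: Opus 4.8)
The final statement to prove is the Product Rule (Lemma \ref{lem:FrechetProductRule}) for Fréchet derivatives in a Banach algebra — both first and second order.

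\textbf{Proof plan.} The plan is to prove the first-order statement directly from the definition of Fréchet differentiability and then bootstrap the second-order statement from it by applying the first-order rule to the map $\theta \mapsto D_\theta\phi[\cdot]\cdot\psi(\theta) + \phi(\theta)\cdot D_\theta\psi[\cdot]$. For the first-order claim, I would write out the increment
\[
  (\phi\cdot\psi)(\theta+h) - (\phi\cdot\psi)(\theta) = \bigl(\phi(\theta+h)-\phi(\theta)\bigr)\cdot\psi(\theta+h) + \phi(\theta)\cdot\bigl(\psi(\theta+h)-\psi(\theta)\bigr),
\]
and then substitute the Fréchet expansions $\phi(\theta+h) = \phi(\theta) + D_\theta\phi[h] + r_\phi(h)$ and $\psi(\theta+h) = \psi(\theta) + D_\theta\psi[h] + r_\psi(h)$ with $\|r_\phi(h)\|/\|h\|\to 0$, $\|r_\psi(h)\|/\|h\|\to 0$. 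Expanding the product and grouping, the candidate derivative $D_\theta\phi[h]\cdot\psi(\theta) + \phi(\theta)\cdot D_\theta\psi[h]$ appears as the linear part, and the remainder consists of terms of the form $D_\theta\phi[h]\cdot D_\theta\psi[h]$, $D_\theta\phi[h]\cdot r_\psi(h)$, $r_\phi(h)\cdot(\psi(\theta)+\cdots)$, etc. Each of these is $o(\|h\|)$: submultiplicativity of the Banach algebra norm gives $\|D_\theta\phi[h]\cdot D_\theta\psi[h]\| \le \|D_\theta\phi\|\,\|D_\theta\psi\|\,\|h\|^2$, and the mixed terms combine the boundedness of the linear operators with the $o(\|h\|)$ decay of the remainders. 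Continuity and linearity of $h\mapsto D_\theta\phi[h]\cdot\psi(\theta) + \phi(\theta)\cdot D_\theta\psi[h]$ is immediate from continuity of multiplication in a Banach algebra and linearity of each Fréchet derivative.

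For the second-order statement, I would note that $\phi,\psi$ twice Fréchet differentiable means $\theta\mapsto D_\theta\phi \in L(\DD,\GG)$ and $\theta\mapsto D_\theta\psi$ are themselves Fréchet differentiable. Then $D_\theta(\phi\cdot\psi) = B(D_\theta\phi,\psi(\theta)) + B(\phi(\theta),D_\theta\psi)$, where $B$ denotes the bilinear multiplication map suitably interpreted (as an element of $L(\DD,\GG)$ in each argument). Applying the first-order product rule together with the chain rule (Lemma \ref{lem:FrechetChainrule}) to each of these two terms — differentiating once more in $\theta$ — yields, on the diagonal,
\[
  D^2_\theta(\phi\cdot\psi)[h,h] = D^2_\theta\phi[h,h]\cdot\psi(\theta) + D_\theta\phi[h]\cdot D_\theta\psi[h] + D_\theta\phi[h]\cdot D_\theta\psi[h] + \phi(\theta)\cdot D^2_\theta\psi[h,h],
\]
which is exactly the claimed formula after combining the two middle terms into $2D_\theta\phi[h]\cdot D_\theta\psi[h]$. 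I would take care that multiplication being bilinear and bounded guarantees it is its own (mixed) derivative, so no extra smoothness hypothesis is needed there.

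\textbf{Expected main obstacle.} The first-order argument is entirely routine estimation. The only genuinely fiddly point is the second-order step: one must be precise about the identification of $D_\theta(\phi\cdot\psi)$ as a composition of a bounded bilinear map with the pair $(\theta\mapsto D_\theta\phi,\ \theta\mapsto\psi(\theta))$ so that the chain rule applies cleanly, and about symmetrizing/restricting to the diagonal $h=h$ to recover the stated bilinear form. This is bookkeeping rather than a conceptual difficulty, and since the excerpt explicitly allows invoking the chain rule (Lemma \ref{lem:FrechetChainrule}) and cites standard references (\cite[Section 3.9]{vdVW:96}, \cite{Cheney:01}, \cite{D:69}, \cite{AS:67}), I would keep the second-order derivation brief and lean on those.
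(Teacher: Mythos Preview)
The paper does not actually prove this lemma: it is stated in Appendix~A as a standard fact, with the surrounding text pointing to \cite[Section~3.9]{vdVW:96}, \cite{Cheney:01}, \cite{D:69}, and \cite{AS:67} for proofs. Your proposal is correct and is precisely the standard argument one would find in those references, so there is nothing to compare against and nothing to fix.
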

\noindent
 We also have a chain rule.
\begin{lemma}[Chain rule]
\label{lem:FrechetChainrule}
 Let $\phi: \bar{\DD} \subset \DD \to \GG$ and $\psi: \bar{\GG} \subset  \GG \to \EE$ with $\phi(\bar{\DD}) \subset \bar{\GG}$ be \Frechet{} differentiable at $\theta \in \bar{\DD}$, $\psi(\theta)\in \bar{\GG}$ respectively. Then $\psi \circ \phi$ is \Frechet{} differentiable at $\theta$ with derivative
 \[ 
 D_{\theta}(\psi\circ \phi)[h] = D_{\phi(\theta)} \psi[ D_{\theta}\phi[h]],\qquad h \in \DD.
 \]
 Here, the right hand side is a linear mapping from $\DD$ to $\EE$. If $\phi$ and $\psi$ are twice \Frechet{} differentiable at the respective points, then $\psi \circ \phi$ is twice \Frechet{} differentiable at $\theta$ with second derivative given by the quadratic form
 \begin{equation}
 \nonumber
  D^2_{\theta} (\psi \circ \phi)[h,h] = D^2_{\phi(\theta)} \psi[ D_{\theta}\phi[h],D_{\theta}\phi[h]]
  +D_{\phi(\theta)} \psi[ D^2_{\theta}\phi[h,h]] , \qquad h \in \DD .
 \end{equation}
\end{lemma}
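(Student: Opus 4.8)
The plan is to proceed exactly as in the classical finite-dimensional case, carefully tracking the continuity of the linear maps involved so that the argument goes through in the normed-space setting. First I would prove the first-order statement. Write $k = k(h) := \phi(\theta+h) - \phi(\theta)$; by \Frechet{} differentiability of $\phi$ at $\theta$ we have $k(h) = D_\theta\phi[h] + r_1(h)$ with $\|r_1(h)\|_\GG = o(\|h\|_\DD)$, and in particular $\|k(h)\|_\GG = O(\|h\|_\DD) \to 0$ as $\|h\|_\DD \to 0$ (using continuity, hence boundedness, of $D_\theta\phi$). Similarly \Frechet{} differentiability of $\psi$ at $\phi(\theta)$ gives $\psi(\phi(\theta)+k) - \psi(\phi(\theta)) = D_{\phi(\theta)}\psi[k] + r_2(k)$ with $\|r_2(k)\|_\EE = o(\|k\|_\GG)$. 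Substituting $k = k(h)$ and expanding,
\[
(\psi\circ\phi)(\theta+h) - (\psi\circ\phi)(\theta) = D_{\phi(\theta)}\psi\bigl[D_\theta\phi[h]\bigr] + D_{\phi(\theta)}\psi[r_1(h)] + r_2(k(h)).
\]
The first term is the claimed derivative, which is continuous and linear as a composition of continuous linear maps. It remains to check that the last two terms are $o(\|h\|_\DD)$: for the middle term, $\|D_{\phi(\theta)}\psi[r_1(h)]\|_\EE \le \|D_{\phi(\theta)}\psi\|\,\|r_1(h)\|_\GG = o(\|h\|_\DD)$; for the last, $\|r_2(k(h))\|_\EE = o(\|k(h)\|_\GG)$ and $\|k(h)\|_\GG = O(\|h\|_\DD)$, so this too is $o(\|h\|_\DD)$ (with the usual care that when $k(h)=0$ the term vanishes). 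This proves the first-order formula.

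For the second-order statement I would differentiate the map $\theta \mapsto D_\theta(\psi\circ\phi) = D_{\phi(\theta)}\psi \circ D_\theta\phi$ once more, viewing it as built from $\theta \mapsto \phi(\theta)$, $\theta\mapsto D_\theta\phi$, and $y\mapsto D_y\psi$, composed with the bilinear evaluation/composition map $L(\GG,\EE)\times L(\DD,\GG) \to L(\DD,\GG)$, $(S,T)\mapsto S\circ T$, which is continuous and bilinear. Applying the product rule (Lemma \ref{lem:FrechetProductRule}, in its bilinear-map form) to this composition map together with the first-order chain rule applied to $\theta\mapsto D_{\phi(\theta)}\psi$ yields, after evaluating the resulting linear-map-valued second derivative on $(h,h)$,
\[
D^2_\theta(\psi\circ\phi)[h,h] = D^2_{\phi(\theta)}\psi\bigl[D_\theta\phi[h], D_\theta\phi[h]\bigr] + D_{\phi(\theta)}\psi\bigl[D^2_\theta\phi[h,h]\bigr].
\]
The symmetry and bilinearity of the right-hand side follow from those of $D^2\psi$ and $D^2\phi$ and the linearity of $D\psi$; continuity of all the constituent maps guarantees that the formal manipulation is legitimate.

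The main obstacle is bookkeeping rather than any deep difficulty: one must be scrupulous that every remainder estimate is expressed in the correct norm and that the boundedness of the linear operators $D_\theta\phi$ and $D_{\phi(\theta)}\psi$ is invoked at the right places (both to get $\|k(h)\|_\GG = O(\|h\|_\DD)$ and to absorb $r_1$ through $D\psi$), and that the degenerate case $k(h)=0$ does not spoil the $o(\cdot)$ bound on $r_2$. For the second-order part, the subtlety is purely one of identifying the derivative of an operator-valued map: one needs the (routine) fact that evaluation/composition of continuous linear operators is a continuous bilinear map so that Lemma \ref{lem:FrechetProductRule} applies, after which the formula is forced. Since the paper works in finite dimensions, all these continuity hypotheses are automatic, so in practice the proof reduces to the substitution computation above.
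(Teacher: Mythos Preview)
Your argument is correct and is exactly the standard finite-dimensional computation the paper has in mind: the paper does not actually write out a proof of this lemma but merely remarks that ``the second part of the lemma can be deduced as in the finite-dimensional case'' and points to standard references, which is precisely the substitution/remainder argument for the first order and the differentiation of $\theta\mapsto D_{\phi(\theta)}\psi\circ D_\theta\phi$ via the bilinear composition map for the second order that you carry out.
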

\noindent
The second part of the lemma 
can be deduced as in the finite-dimensional case.
It is also an elementary observation to obtain the following result on the \Frechet{} derivative of projections.
\begin{lemma}[Projection]
\label{lem:FrechetProjection}
Let $\DD=\DD_1 \times \DD_2$ be a product space of two normed spaces and $\bar{\DD}_1 \subset \DD_1$ open. Let $\phi:\bar{\DD}_1\to \GG$ be 
\Frechet{} differentiable  on $\bar{\DD}_1$ with \Frechet{} derivative $D_{\theta_1}\phi$ at the point $\theta_1 \in \bar{\DD}_1.$ Then $\psi: \bar{\DD}_1 \times \DD_2 \to \GG,$ $(\theta_1, \theta_2) \mapsto \phi(\theta_1)$ is \Frechet{} differentiable in $\bar{\DD}_1 \times \DD_2$ with \Frechet{} derivative
$D_{(\theta_1,\theta_2)}\psi ((h_1,h_2)) = $ $D_{\theta_1}\phi(h_1),$ $h_1\in \DD_1, h_2 \in \DD_2.$
\end{lemma}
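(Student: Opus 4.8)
The plan is to verify the definition of \Frechet{} differentiability directly, since the statement is elementary. Equip the product space $\DD = \DD_1 \times \DD_2$ with any of the equivalent product norms, say $\|(h_1,h_2)\|_{\DD} = \|h_1\|_{\DD_1} + \|h_2\|_{\DD_2}$; the notion of \Frechet{} differentiability does not depend on this choice. First I would fix $(\theta_1,\theta_2) \in \bar{\DD}_1 \times \DD_2$ and propose the candidate derivative $L := D_{\theta_1}\phi \circ \pi_1$, where $\pi_1:\DD_1\times\DD_2 \to \DD_1$ is the projection $(h_1,h_2)\mapsto h_1$. This $L$ is linear (composition of linear maps) and continuous (composition of continuous maps, as $\pi_1$ is bounded with norm $\le 1$ and $D_{\theta_1}\phi$ is continuous by hypothesis), so $L \in L(\DD,\GG)$ as required.

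Next I would estimate the remainder. For $h = (h_1,h_2)$ with $\|h\|_{\DD} \to 0$ we have in particular $\|h_1\|_{\DD_1} \to 0$, and
\[
  \frac{\|\psi(\theta_1+h_1,\theta_2+h_2) - \psi(\theta_1,\theta_2) - L[h]\|_{\GG}}{\|h\|_{\DD}}
  = \frac{\|\phi(\theta_1+h_1) - \phi(\theta_1) - D_{\theta_1}\phi[h_1]\|_{\GG}}{\|h_1\|_{\DD_1} + \|h_2\|_{\DD_2}}
  \le \frac{\|\phi(\theta_1+h_1) - \phi(\theta_1) - D_{\theta_1}\phi[h_1]\|_{\GG}}{\|h_1\|_{\DD_1}},
\]
where in the first equality I used that $\psi$ ignores the second coordinate and $L[h] = D_{\theta_1}\phi[h_1]$. (One needs to note that $\theta_1 + h_1 \in \bar{\DD}_1$ for $\|h_1\|_{\DD_1}$ small, which holds since $\bar{\DD}_1$ is open, so that $\phi$ is defined at $\theta_1+h_1$; and the case $h_1 = 0$, $h_2 \neq 0$ is trivial because then the numerator vanishes.) The right-hand side tends to $0$ as $\|h_1\|_{\DD_1}\to 0$ by the \Frechet{} differentiability of $\phi$ at $\theta_1$, hence the left-hand side tends to $0$ as $\|h\|_{\DD}\to 0$. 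This is exactly the defining condition, so $\psi$ is \Frechet{} differentiable at $(\theta_1,\theta_2)$ with $D_{(\theta_1,\theta_2)}\psi[(h_1,h_2)] = D_{\theta_1}\phi[h_1]$.

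There is no real obstacle here; the only points requiring minor care are the bookkeeping on which product norm is used (and the remark that it is immaterial), the openness argument ensuring $\phi(\theta_1+h_1)$ makes sense, and handling the degenerate direction $h_1 = 0$ separately so one never divides by zero. Since the claim is stated for every $(\theta_1,\theta_2)$, differentiability on all of $\bar{\DD}_1 \times \DD_2$ follows.
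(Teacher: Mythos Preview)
Your proof is correct and takes essentially the same approach as the paper: both verify the \Frechet{} condition directly by bounding the difference quotient on $\DD_1\times\DD_2$ by the corresponding quotient on $\DD_1$ (using $\|h_1\|_{\DD_1}\le\|(h_1,h_2)\|_{\DD}$) and invoking the differentiability of $\phi$. Your write-up is in fact slightly more careful than the paper's, as you explicitly address the choice of product norm, the continuity of the candidate derivative, and the degenerate case $h_1=0$.
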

\begin{proof}
We have that for $(\theta_1,\theta_2) \in \bar{\DD}_1 \times \DD_2,$
 \begin{align*}
&  \lim_{||(h_1,h_2)||_{\DD} \to 0 } \frac{1}{||(h_1,h_2)||_\DD} ||\psi((\theta_1,\theta_2) + (h_1,h_2)) -\psi((\theta_1,\theta_2) ) - D_{\theta_1}\phi(h_1)||_\GG \\
& \leq  \lim_{||h_1||_{\DD_1} \to 0 } \frac{1}{||h_1||_{\DD_1}} ||\phi(\theta_1+h_1) -\phi(\theta_1) - D_{\theta_1}\phi(h_1)||_\GG =0.
\end{align*}
\end{proof}

\section{A second order result on \Frechet{} derivatives}\label{s.2nd.order}

We closely follow Chapter 3 of \cite{GHJR:09} and extend their results to a derivative of second order. Consider a separable Hilbert space $\mcH$ and the class of bounded linear operators $\mcL$ from $\mcH$ to $\mcH$. Its subclasses of Hermitian and compact Hermitian operators are denoted by $\mcL_\mcH$ and $\mathcal{C}_\mcH.$

For any $T \in \mcL$ the spectrum $\sigma(T)$ is contained in a bounded open region $\Omega = \Omega(T) \subset \IC$. Assume that $\Omega$ has a smooth boundary $\Gamma = \partial \Omega$ with
\[ \delta_{\Gamma, T} = \text{dist}(\Gamma, \sigma(T)) >0 .\]
Assume additionally that $\bar{\Omega} \subset D$ for an open set $D \subset \IC$ and that $\phi: D \to \IC$ is analytic. Define
\begin{equation}
\label{M,L_Gamma}
 M_\Gamma = \max_{z \in \Gamma}|\phi(z)| < \infty, \quad L_\Gamma = \text{ length of } \Gamma < \infty.
\end{equation}
On the resolvent set $\rho(T) = (\sigma(T))^c$, the resolvent given by 
\[  R(z,T) = (zI- T)^{-1}  \]
is well-defined and analytic. This allows to define the operator
\begin{equation}\label{e.tr4} \phi(T) = \frac{1}{2\pi i} \int_\Gamma \phi(z) R(z) \, \dx z .\end{equation}

Define additionally for $G \in \mcL$:
\begin{align}
 \label{eq:Dphi} D_T\phi[G] &= \frac{1}{2\pi i} \int_\Gamma \phi(z) R(z,T) G R(z,T) \, \dx z, \\
 \label{eq:D2phi} D^2_T\phi[G,G] &= \frac{1}{2\pi i} \int_\Gamma \phi(z) R(z,T) ( G R(z,T))^2 \, \dx z, \\
 \label{eq:S_phi} S_{\phi,T,2}[G] &= \frac{1}{2\pi i} \int_\Gamma \phi(z) R(z,T) ( G R(z,T))^3 (I-G R(z,T))^{-1} \, \dx z.
\end{align}
We will see in a moment that $D_T \phi$ and $D^2_T \phi$  are the first and second Fréchet derivatives of $\phi.$ The second derivative is a symmetric bilinear form. Recall that symmetric bilinear forms $B(\cdot, \cdot)$ are characterized by their corresponding quadratic form $Q(\cdot)$ via the polarization identity. 

By Lemma VII.6.11 in \cite{DS:88} there is a constant $K = |\Gamma| \sup_{z \in \Gamma} \| R(z,T) \| < \infty$ such that
\begin{equation}\label{eq:DS}
 \| R(z,T) \|_\mcL \leq \frac{K}{\delta_{\Gamma,T}}, \quad \forall z \in \Omega^c.
\end{equation}

Next, we derive an extension of Theorem 3.1 in \cite{GHJR:09}.
\begin{theorem}\label{thm:ghjr}
 Suppose that $\phi: D\subset \IC \to \R$ is analytic and $T \in \mcL$ with $\sigma(T) \subset \Omega(T)\subset D$ with 
 \[ \delta_{\Gamma, T} = \text{dist}(\Gamma, \sigma(T)) >0 .\]
 Then $\phi$ maps the neighborhood 
 \[ \{\tilde{T} = T+G:\, G \in \mcL, \|G\|_\mcL \leq c \delta_{\Gamma,T}/K \text{ for some } c<1\} \]
 into $\mcL$. This mapping is twice \Frechet{} differentiable at $T$, tangentially to $\mcL$, with bounded first derivative $D_T\phi: \mcL \to \mcL$ and the second derivative is characterized by its diagonal form $D^2_T\phi: \mcL \to \mcL$. More specifically, we have
 \begin{equation}\label{eq:phi:T+G}
  \phi(T+G) = \phi (T) + D_T\phi[G] + D^2_T\phi[G,G] + S_{\phi,T,2}[G]
 \end{equation}
 with
 \begin{equation}
  \| S_{\phi,T,2}[G] \|_\mcL  \leq \frac{1}{2(1-c)\pi}M_\Gamma L_\Gamma K^4 \delta_{\Gamma,T}^{-4} \|G\|_\mcL^3.
 \end{equation}
\end{theorem}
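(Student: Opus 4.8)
The plan is to mimic the proof of Theorem 3.1 in \cite{GHJR:09}, simply carrying the Neumann expansion of the resolvent one term further than they do. The starting point is the standard resolvent identity: for $\tilde T = T + G$ with $\|G\|_\mcL$ small enough that $\|G R(z,T)\|_\mcL < 1$ uniformly in $z \in \Gamma$ (which, by \eqref{eq:DS}, is guaranteed once $\|G\|_\mcL \le c\,\delta_{\Gamma,T}/K$ with $c<1$), one has the geometric series
\begin{equation*}
 R(z,\tilde T) = R(z,T)\sum_{k=0}^{\infty}\bigl(G R(z,T)\bigr)^k
 = R(z,T) + R(z,T)G R(z,T) + R(z,T)\bigl(G R(z,T)\bigr)^2 + R(z,T)\bigl(G R(z,T)\bigr)^3\bigl(I - G R(z,T)\bigr)^{-1}.
\end{equation*}
First I would verify this identity and the convergence of the series, noting that $\sigma(\tilde T)$ stays inside $\Omega$ so that $\phi(\tilde T) = \frac{1}{2\pi i}\int_\Gamma \phi(z)R(z,\tilde T)\,\dx z$ is well-defined via \eqref{e.tr4} with the \emph{same} contour $\Gamma$.

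Next I would substitute this four-term decomposition into the contour integral defining $\phi(\tilde T)$ and integrate term by term (justified by uniform convergence on the compact contour $\Gamma$). The first three integrals are exactly $\phi(T)$, $D_T\phi[G]$, and $D^2_T\phi[G,G]$ as defined in \eqref{e.tr4}, \eqref{eq:Dphi}, \eqref{eq:D2phi}, and the fourth is $S_{\phi,T,2}[G]$ from \eqref{eq:S_phi}. This gives \eqref{eq:phi:T+G}. The remainder bound is then a routine estimate: on $\Gamma$ we have $|\phi(z)| \le M_\Gamma$, $\|R(z,T)\|_\mcL \le K/\delta_{\Gamma,T}$, hence $\|G R(z,T)\|_\mcL \le c < 1$, so $\|(I - G R(z,T))^{-1}\|_\mcL \le 1/(1-c)$; multiplying the four resolvent factors, the cubic power of $G$, the length $L_\Gamma$ of the contour, and the $1/(2\pi)$ from the Cauchy integral yields
\begin{equation*}
 \|S_{\phi,T,2}[G]\|_\mcL \le \frac{1}{2\pi}L_\Gamma\, M_\Gamma\, \Bigl(\frac{K}{\delta_{\Gamma,T}}\Bigr)^4 \frac{1}{1-c}\,\|G\|_\mcL^3,
\end{equation*}
which is the claimed bound.

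It then remains to identify $D_T\phi$ and $D^2_T\phi$ as genuine first and second \Frechet{} derivatives. Linearity of $G \mapsto D_T\phi[G]$ and boundedness (from the same resolvent estimate) are immediate from \eqref{eq:Dphi}; that it is the \Frechet{} derivative follows because $\phi(T+G) - \phi(T) - D_T\phi[G] = D^2_T\phi[G,G] + S_{\phi,T,2}[G] = O(\|G\|_\mcL^2)$. For the second derivative one argues that $G \mapsto D^2_T\phi[G,G]$ is the quadratic form of the bounded symmetric bilinear map $\frac{1}{2\pi i}\int_\Gamma \phi(z)R(z,T)G_1 R(z,T)G_2 R(z,T)\,\dx z$ (symmetry being restored by the polarization identity, exactly as recalled in the paragraph preceding \eqref{eq:DS}), and that $D_{T+G}\phi[\cdot] - D_T\phi[\cdot] - 2\,D^2_T\phi[G,\cdot] = O(\|G\|_\mcL^2)$ in operator norm, again by expanding each resolvent in $D_{T+G}\phi$ as a Neumann series and collecting the higher-order terms into an $S$-type remainder.

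The main obstacle is bookkeeping rather than conceptual: one must be careful that the contour $\Gamma$ and the region $\Omega$ are chosen \emph{once} for $T$ and remain valid for all nearby $\tilde T = T+G$ — i.e.\ that $\sigma(\tilde T) \subset \Omega$ for $\|G\|_\mcL$ in the stated range — and that the Neumann series for $R(z,\tilde T)$ converges uniformly in $z$ along $\Gamma$, not merely pointwise. Both follow from \eqref{eq:DS} and the choice $\|G\|_\mcL \le c\,\delta_{\Gamma,T}/K$, but keeping the constants consistent through the remainder estimate is where care is needed. Everything else is a direct extension of \cite{GHJR:09} by one order in the geometric series.
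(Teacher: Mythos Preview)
Your proposal is correct and follows essentially the same route as the paper: expand $R(z,T+G)$ via the Neumann series to third order with explicit remainder, substitute into the contour integral defining $\phi(T+G)$, and bound the remainder term using \eqref{eq:DS} together with $\|(I-GR(z,T))^{-1}\|_\mcL \le (1-c)^{-1}$. If anything, your write-up is slightly more complete than the paper's, since you also spell out why $D_T\phi$ and $D^2_T\phi$ are the genuine \Frechet{} derivatives, whereas the paper leaves this implicit.
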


\begin{proof}
 We have  for all $G \in \mcL$ with $\|G\|_\mcL \leq c \delta_{\Gamma,T} K^{-1}$ by \eqref{eq:DS} that 
  \begin{equation}\label{eq:GRbound} 
  \| G R(z) \|_\mcL < c .
   \end{equation}
 This allows to calculate
 \begin{align}
  R(z,T) (I-GR(z,T))^{-1} &= R(z,T)\left[(R(z,T)^{-1} - G)R(z,T)\right]^{-1} \\
  \nn
  &=\left[ zI-T-G\right]^{-1} = R(z,\tilde T)
 \end{align}
 for any $z \in \Omega^c$ with $\tilde{T} = T + G$ as above. As the left hand side of the previous equation is well-defined, we conclude that 
 $z \in \rho(\tilde T)$. Thus, $\sigma(\tilde T) \subset \Omega$ and the mapping $\phi$ applied to $\tilde{T}=T+G$ is well defined via
 \begin{equation}\label{eq:phi:T+G:pr}
  \phi(T+G) = \frac{1}{2\pi i} \int_\Gamma \phi(z) R(z,T+G) \, \dx z \quad \text{ for } G \in \mcL \text{ with } \|G\|_\mcL \leq c\delta_{\Gamma,T}/K.
 \end{equation}
Using a Neumann series expansion we can obtain
\begin{align*}
 R(z,T+G) & = R(z,T)\big(I + GR(z,T)+(GR(z,T))^2 \\
          &\phantom{aaaaaaaaaa}  + (GR(z,T))^3(I-GR(z,T))^{-1} \big) \\
 & = R(z,T) + R(z,T)GR(z,T) + R(z,T)GR(z,T)GR(z,T) \\
 & \qquad + R(z,T) (GR(z,T))^3(I-GR(z,T))^{-1}.
\end{align*}
and inserting this into \eqref{eq:phi:T+G:pr} allows to obtain \eqref{eq:phi:T+G}. The bound on $S_{\phi,T,2}[G]$ can be obtained from 
(\ref{eq:S_phi}) using (\ref{M,L_Gamma}) as well as (\ref{eq:DS}) and $\|(I-GR(z,T))^{-1}\|_\mcL \leq (1-\|GR(z,T)\|_\mcL)^{-1}\leq (1-c)^{-1}$ by (\ref{eq:GRbound}).
\end{proof}

Now let us restrict $T$ to the subset $\mathcal{C}_\mcH$ of compact Hermitian operators. That allows a representation
\begin{equation}\label{eq:T:hermit}
 T = \sum_{i=1}^\infty \lambda_i P_i,
\end{equation}
where $\lambda_i \in \R$ are eigenvalues and $P_i$ are orthogonal projections onto one-dimensional eigenspaces (since $T$ is compact, to each non-zero eigenvalue there is a finite-dimensional eigenspace that can be decomposed into orthogonal spaces).
Then the resolvent has the following form
\begin{equation}\label{eq:reso:hermit}
 R(z,T) = \sum_{i=1}^\infty \frac{1}{z-\lambda_i} P_i, \quad z \in \rho(T)
\end{equation}
and for $\phi: D \subset \sigma(T) \to \R$:
\begin{equation}
 \phi(T) = \sum_{i=1}^\infty \phi(\lambda_i) P_i.
\end{equation}

\begin{corollary}
 \label{cor:Second derivative}
 Let the conditions of Theorem \ref{thm:ghjr} be fulfilled for $T \in \mathcal{C}_\mcH$ with expansion \eqref{eq:T:hermit}. In this case 
 \begin{align}
  &D_T\phi [G]   = \sum_{i=1}^\infty \phi'(\lambda_i) P_i G P_i + \sum_{i\neq k} \frac{\phi(\lambda_k)-\phi(\lambda_i)}{\lambda_k - \lambda_i} P_i G P_k
  \end{align}
  and
  \begin{align}
   \nn  D^2_T\phi & [G,G]  
   =    \sum_{i,j,k=1}^\infty \1_{\{\lambda_i \neq \lambda_j \neq \lambda_k \neq \lambda_i\}}
  P_i G P_j G P_k \\
\nn
 &  \phantom{aaaaaaaaaaaa} \cdot \frac{(\lambda_j-\lambda_k)\phi(\lambda_i) + (\lambda_k-\lambda_i)\phi(\lambda_j) + (\lambda_i-\lambda_j)\phi(\lambda_k)}{\lambda_i^2(\lambda_k-\lambda_j) + \lambda_j^2(\lambda_i-\lambda_k)  + \lambda_k^2(\lambda_j-\lambda_i) }\\
  & \quad  +  \sum_{i,j =1}^\infty  \frac{\1_{\{\lambda_j \neq \lambda_i\}}}{\lambda_i -\lambda_j}  \left( P_j G P_j G P_i + P_i G P_j G P_j + P_j G P_i G P_j \right) \\
  & \phantom{aaaaaaaaaaaa} \cdot \left[ \phi'(\lambda_j) - \frac{\phi(\lambda_i) -\phi(\lambda_j)}{\lambda_i -\lambda_j} \right]\\
  \nn &  \quad +\sum_{i=1}^\infty \phi''(\lambda_i) P_i G P_i G P_i .
 \end{align}
 for all $G \in \mcL.$
\end{corollary}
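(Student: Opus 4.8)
The plan is to start from the contour–integral representations \eqref{eq:Dphi} and \eqref{eq:D2phi}, whose validity (and the remainder estimate) is already furnished by Theorem \ref{thm:ghjr}, and simply to insert the spectral form of the resolvent \eqref{eq:reso:hermit}, $R(z,T)=\sum_i(z-\lambda_i)^{-1}P_i$, which holds for every $z\in\rho(T)\supset\Gamma$. Expanding $R(z,T)GR(z,T)=\sum_{i,k}(z-\lambda_i)^{-1}(z-\lambda_k)^{-1}P_iGP_k$ and interchanging the (on the compact contour $\Gamma$ absolutely convergent) series with the integral turns \eqref{eq:Dphi} into $D_T\phi[G]=\sum_{i,k}c_{ik}\,P_iGP_k$ with scalars $c_{ik}=\frac{1}{2\pi i}\int_\Gamma\phi(z)(z-\lambda_i)^{-1}(z-\lambda_k)^{-1}\,\dx z$. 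These are computed by the residue theorem: if $\lambda_i=\lambda_k$ the integrand has a double pole at $\lambda_i$ and Cauchy's formula gives $c_{ik}=\phi'(\lambda_i)$; if $\lambda_i\neq\lambda_k$ there are two simple poles and $c_{ik}=\phi(\lambda_i)/(\lambda_i-\lambda_k)+\phi(\lambda_k)/(\lambda_k-\lambda_i)=(\phi(\lambda_k)-\phi(\lambda_i))/(\lambda_k-\lambda_i)$. Collecting the two cases gives the stated first–derivative formula (this part recovers the corresponding formula in \cite{GHJR:09}).

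For the second derivative I would proceed in exactly the same way from \eqref{eq:D2phi}: since $R(z,T)(GR(z,T))^2=\sum_{i,j,k}(z-\lambda_i)^{-1}(z-\lambda_j)^{-1}(z-\lambda_k)^{-1}P_iGP_jGP_k$, one gets $D^2_T\phi[G,G]=\sum_{i,j,k}c_{ijk}\,P_iGP_jGP_k$ with $c_{ijk}=\frac{1}{2\pi i}\int_\Gamma\phi(z)\big((z-\lambda_i)(z-\lambda_j)(z-\lambda_k)\big)^{-1}\dx z$. The coefficient $c_{ijk}$ depends only on the multiset $\{\lambda_i,\lambda_j,\lambda_k\}$, so I would split the triple sum according to the coincidence pattern. (i) All three distinct: three simple poles, so $c_{ijk}=\frac{\phi(\lambda_i)}{(\lambda_i-\lambda_j)(\lambda_i-\lambda_k)}+\frac{\phi(\lambda_j)}{(\lambda_j-\lambda_i)(\lambda_j-\lambda_k)}+\frac{\phi(\lambda_k)}{(\lambda_k-\lambda_i)(\lambda_k-\lambda_j)}$, which over the common denominator $(\lambda_i-\lambda_j)(\lambda_j-\lambda_k)(\lambda_k-\lambda_i)$ and using the Vandermonde identity $(\lambda_i-\lambda_j)(\lambda_j-\lambda_k)(\lambda_k-\lambda_i)=\lambda_i^2(\lambda_k-\lambda_j)+\lambda_j^2(\lambda_i-\lambda_k)+\lambda_k^2(\lambda_j-\lambda_i)$ collapses to the first displayed sum of the Corollary. (ii) Exactly two eigenvalues coincide: a double pole plus a simple pole; here the three index patterns with the coincident pair in positions $\{1,2\}$, $\{2,3\}$, $\{1,3\}$ give the three operator words $P_jGP_jGP_i$, $P_iGP_jGP_j$, $P_jGP_iGP_j$, all sharing the same $c_{ijk}$, and computing the double-pole residue via $\tfrac{d}{dz}\big(\phi(z)/(z-\lambda_i)\big)$ together with the simple-pole residue yields the coefficient $(\lambda_i-\lambda_j)^{-1}\big[\phi'(\lambda_j)-(\phi(\lambda_i)-\phi(\lambda_j))/(\lambda_i-\lambda_j)\big]$, i.e. the second sum (a first divided difference of $\phi'$). (iii) All three coincide: a triple pole, and Cauchy's formula gives a multiple of $\phi''(\lambda_i)$, i.e. the last sum. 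Adding the three contributions over all indices gives the claimed expression for $D^2_T\phi[G,G]$; its symmetry as a bilinear form is already guaranteed by Theorem \ref{thm:ghjr} via polarization.

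The routine but slightly delicate part is the case-(ii)/(iii) bookkeeping: one must check that every triple $(i,j,k)$ is counted exactly once, that for eigenvalues of multiplicity larger than one the one-dimensional projections in \eqref{eq:T:hermit} recombine correctly, and that the divided-difference/Vandermonde simplifications are carried out consistently with the orientation of $\Gamma$. The only genuinely analytic point beyond Theorem \ref{thm:ghjr} is the interchange of the countable spectral sums with the contour integral in the compact case, which is justified by the uniform bound \eqref{eq:DS} on $\|R(z,T)\|$ over $z\in\Gamma$ (equivalently by $\lambda_i\to 0$, so that $\inf_{z\in\Gamma,\,i}|z-\lambda_i|>0$) together with dominated convergence; in the finite-dimensional setting actually used in the body of the paper this step is vacuous and the whole argument reduces to a finite residue computation.
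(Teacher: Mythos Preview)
Your proposal is correct and follows essentially the same route as the paper: insert the spectral resolvent \eqref{eq:reso:hermit} into \eqref{eq:Dphi} and \eqref{eq:D2phi}, reduce to scalar contour integrals indexed by $(i,k)$ and $(i,j,k)$, and split according to the coincidence pattern of the eigenvalues. The only cosmetic difference is that the paper writes out the partial-fraction decomposition of $\big((z-\lambda_i)(z-\lambda_j)(z-\lambda_k)\big)^{-1}$ explicitly before integrating, whereas you invoke the residue theorem directly; the two computations are of course equivalent, and your added remarks on the interchange of sum and integral and on the index bookkeeping are a welcome supplement to the paper's more terse presentation.
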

\begin{proof}
 We can use the explicit form of the resolvent from \eqref{eq:reso:hermit} in \eqref{eq:Dphi} and \eqref{eq:D2phi}. We restrict our attention to the second derivative since the first derivative was already explained in \cite{GHJR:09}. Thus,
 \begin{equation*}
  D^2_T\phi [G,G]  = \sum_{i,j,k=1}^\infty \frac{1}{2\pi i} \int_\Gamma \frac{\phi(z)}{(z-\lambda_i)(z-\lambda_j)(z-\lambda_k)} \, \dx z \ P_i G P_j G P_k.
 \end{equation*}
 Note that for pairwise different $\lambda_i, \lambda_j,  \lambda_k$:
 \begin{align*}
  \frac{1}{z-\lambda_i}\frac{1}{z-\lambda_j}\frac{1}{z-\lambda_k} & = \frac{1}{\lambda_i^2(\lambda_j-\lambda_k) + \lambda_j^2(\lambda_k-\lambda_i)  + \lambda_k^2(\lambda_i-\lambda_j) } \\
  & \qquad \qquad  \qquad   \quad \cdot \left[ \frac{\lambda_j-\lambda_k}{z-\lambda_i} + \frac{\lambda_k-\lambda_i}{z-\lambda_j} + \frac{\lambda_i-\lambda_j}{z-\lambda_k} \right].
 \end{align*}
Additionally, for $\lambda_i = \lambda_j \neq \lambda_k$:
 \begin{align*}
  \frac{1}{z-\lambda_i}\frac{1}{z-\lambda_j} &\frac{1}{z-\lambda_k} 
   = \frac{1}{\lambda_i -\lambda_k} \\
   &\ \cdot \left[ \frac{1}{(z-\lambda_i)^2} - \frac{1}{(z-\lambda_i)(\lambda_i-\lambda_k)} 
  + \frac{1}{(z-\lambda_k)(\lambda_i-\lambda_k)} \right].
 \end{align*}
 This allows to derive
 \begin{align*}
  D^2_T & \phi [G,G]  = \sum_{i,j,k = 1}^\infty \int_\Gamma \phi(z)  \frac{1}{z-\lambda_i}\frac{1}{z-\lambda_j}\frac{1}{z-\lambda_k} \, \dx z \ P_i G P_j G P_k \\
  & = \sum_{i,j,k=1}^\infty \1_{\{\lambda_i \neq \lambda_j \neq \lambda_k \neq \lambda_i\}} P_i G P_j G P_k \\
  & \phantom{AAAAAAAAAA}  \frac{(\lambda_j-\lambda_k)\phi(\lambda_i) + (\lambda_k-\lambda_i)\phi(\lambda_j) + (\lambda_i-\lambda_j)\phi(\lambda_k)}
  {\lambda_i^2(\lambda_j-\lambda_k) + \lambda_j^2(\lambda_k-\lambda_i)  + \lambda_k^2(\lambda_i-\lambda_j) }   \\
  & \quad +\sum_{i,k =1}^\infty \1_{\{\lambda_i = \lambda_j \neq \lambda_k\}} \frac{1}{\lambda_i -\lambda_k} 
  \left[ \phi'(\lambda_i) - \frac{\phi(\lambda_i) -\phi(\lambda_k)}{\lambda_i -\lambda_k} \right] P_i G P_i G P_k \\
  & \quad +\sum_{i,j =1}^\infty \1_{\{\lambda_j = \lambda_k \neq \lambda_i\}} \frac{1}{\lambda_j -\lambda_i} 
  \left[ \phi'(\lambda_j) - \frac{\phi(\lambda_j) -\phi(\lambda_i)}{\lambda_j -\lambda_i} \right] P_i G P_j G P_j 
  \\ 
  & \quad +\sum_{j,k =1}^\infty \1_{\{\lambda_k = \lambda_i \neq \lambda_j\}} \frac{1}{\lambda_k -\lambda_j} 
  \left[ \phi'(\lambda_k) - \frac{\phi(\lambda_k) -\phi(\lambda_j)}{\lambda_k -\lambda_j} \right] 
   P_k G P_j G P_k \\
  & \quad +\sum_{i=1}^\infty  \phi''(\lambda_i) P_i G P_i G P_i \, .
  \end{align*}
  Now a relabeling of the indices allows to obtain the result we wanted to show.
\end{proof}

\section{Some elementary facts on matrices}

The next results are elementary but as we regularly use them we state them here.
\begin{lemma}[Theorem 2.8 of \cite{fZ:11}]\label{lem:ev(AB)}
 Let $A$ and $B$ be $m\times n$ and $n\times m$ complex matrices, respectively. Then $AB$ and $BA$ have the same non-zero eigenvalues, counting multiplicity. In particular for symmetric positive definite $\Sigma$ and $\Xi$: eigenvalues of $(A^{1/2}BA^{1/2})$ and $(AB)$ are the same, counting multiplicity. Moreover,
 \begin{equation} \tr (AB) =\tr(BA) . \end{equation}
\end{lemma}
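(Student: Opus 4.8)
The plan is to prove the eigenvalue assertion by the classical block-matrix similarity trick and then obtain the positive definite specialization and the trace identity as immediate corollaries. First I would introduce, for the given $m\times n$ and $n\times m$ matrices $A$ and $B$, the two $(m+n)\times(m+n)$ block matrices
\[
 N_1 = \begin{pmatrix} AB & 0 \\ B & 0 \end{pmatrix}, \qquad N_2 = \begin{pmatrix} 0 & 0 \\ B & BA \end{pmatrix},
\]
together with the invertible matrix $M = \begin{pmatrix} I_m & A \\ 0 & I_n \end{pmatrix}$, whose inverse is $\begin{pmatrix} I_m & -A \\ 0 & I_n \end{pmatrix}$. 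A one-line block computation gives $M^{-1}N_1 M = N_2$, so $N_1$ and $N_2$ are similar and therefore have the same characteristic polynomial. Since $\lambda I - N_1$ and $\lambda I - N_2$ are both block triangular, their determinants factor as $\lambda^{n}\det(\lambda I_m - AB)$ and $\lambda^{m}\det(\lambda I_n - BA)$ respectively; equating them yields $\lambda^{n}\chi_{AB}(\lambda) = \lambda^{m}\chi_{BA}(\lambda)$, where $\chi_{X}$ denotes the characteristic polynomial of $X$. This identity forces $\chi_{AB}$ and $\chi_{BA}$ to have the same nonzero roots with the same multiplicities, which is the first assertion.

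For the positive definite case I would argue even more directly: with $\Sigma,\Xi\in S_+(\R^d)$ the matrix $\Sigma^{1/2}$ is invertible and $\Sigma\Xi = \Sigma^{1/2}\bigl(\Sigma^{1/2}\Xi\Sigma^{1/2}\bigr)\Sigma^{-1/2}$, so $\Sigma\Xi$ and $\Sigma^{1/2}\Xi\Sigma^{1/2}$ are similar and hence have identical spectra, counting multiplicity. (Alternatively this also follows from the first part applied to $\Sigma^{1/2}$ and $\Xi\Sigma^{1/2}$.) Since $\Sigma^{1/2}\Xi\Sigma^{1/2}$ is symmetric positive definite it has no zero eigenvalue, so in this case the multiplicities match across the entire spectrum and the qualifier ``nonzero'' is vacuous.

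The trace identity I would verify by a bare index computation, $\tr(AB) = \sum_{i=1}^{m}\sum_{j=1}^{n} A_{ij}B_{ji} = \sum_{j=1}^{n}\sum_{i=1}^{m} B_{ji}A_{ij} = \tr(BA)$, which needs no eigenvalue theory and covers rectangular $A,B$ directly. I do not expect any real obstacle: the whole statement is elementary linear algebra and is recorded as Theorem 2.8 of \cite{fZ:11}. The only point deserving a moment's care is the bookkeeping of zero eigenvalues when $m\neq n$, i.e.~reading ``counting multiplicity'' as a statement about the nonzero spectrum in the rectangular case, whereas in the positive definite specialization that subtlety disappears.
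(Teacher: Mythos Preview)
Your argument is correct in all three parts: the block-matrix similarity $M^{-1}N_1M=N_2$ is verified by direct computation, the resulting identity $\lambda^n\chi_{AB}(\lambda)=\lambda^m\chi_{BA}(\lambda)$ gives exactly the claimed matching of nonzero eigenvalues with multiplicity, the similarity $\Sigma\Xi=\Sigma^{1/2}(\Sigma^{1/2}\Xi\Sigma^{1/2})\Sigma^{-1/2}$ handles the positive definite specialization cleanly, and the index computation for the trace is fine.

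As to comparison with the paper: there is nothing to compare. The paper does not prove this lemma at all; it simply records it as Theorem~2.8 of \cite{fZ:11} and moves on, treating it as a standard textbook fact. Your proposal therefore supplies strictly more than the paper does, namely a self-contained proof where the paper gives only a citation. The block-matrix trick you use is in fact the standard proof one finds in references such as \cite{fZ:11}, so in that sense your approach is the expected one.
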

%

A helpful tool for calculating the trace is the following lemma.
\begin{lemma}\label{lem:trace-formula}
 Let $\{x_1, \dots, x_\dimension \}$ be any orthonormal basis of $\Rdim$ and $A \in \R^{\dimension \times \dimension}$. Then
 \begin{equation} \tr (A) = \sum_{i=1}^\dimension x_i^t A x_i. \end{equation}
\end{lemma}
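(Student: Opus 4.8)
The plan is to reduce the claim to the completeness relation $\sum_{i=1}^{\dimension} x_i x_i^t = I$ for an orthonormal basis, combined with the cyclic invariance of the trace. First I would note that each $x_i^t A x_i$ is a scalar, hence equals its own trace, so by $\tr(uv^t)=v^t u$ (equivalently, cyclicity: $\tr(uv^t) = \tr(v^t u)$) we may rewrite $x_i^t A x_i = \tr\bigl(A\, x_i x_i^t\bigr)$. Summing over $i$ and moving the finite sum inside the trace gives $\sum_{i=1}^{\dimension} x_i^t A x_i = \tr\bigl(A \sum_{i=1}^{\dimension} x_i x_i^t\bigr)$.

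It then remains to verify $\sum_{i=1}^{\dimension} x_i x_i^t = I$. This is standard: for arbitrary $v \in \Rdim$ the orthonormal basis expansion gives $v = \sum_{i=1}^{\dimension} \langle x_i, v\rangle\, x_i = \bigl(\sum_{i=1}^{\dimension} x_i x_i^t\bigr) v$, and since $v$ was arbitrary the operator identity follows. Substituting back yields $\sum_{i=1}^{\dimension} x_i^t A x_i = \tr(A\cdot I) = \tr(A)$, which is the assertion. An equivalent one-line phrasing: if $U$ denotes the orthogonal matrix with columns $x_1,\dots,x_{\dimension}$ (so $U^t U = U U^t = I$), then $\sum_{i=1}^{\dimension} x_i^t A x_i = \sum_{i=1}^{\dimension} (U^t A U)_{ii} = \tr(U^t A U) = \tr(A U U^t) = \tr(A)$.

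There is no real obstacle here; the statement is elementary, and the only point requiring a word of care is that the interchange of the finite sum with the trace is trivially legitimate. I would simply present the argument via the completeness relation, as it is the shortest and makes clear why orthonormality (rather than mere linear independence) of the $x_i$ is used.
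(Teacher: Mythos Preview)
Your argument is correct and essentially coincides with the paper's proof: the paper forms the orthogonal matrix $P$ with rows $x_i^t$, uses $P^tP=I$ (equivalently your completeness relation $\sum_i x_i x_i^t = I$), and unwinds the trace in coordinates, which is exactly your alternative ``one-line'' version with $U=P^t$. The only cosmetic difference is that the paper writes out the index sums explicitly rather than invoking cyclicity of the trace.
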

\begin{proof}
 Let $P=(x_1^t, \dots, x_\dimension^t )^t\in \R^{\dimension \times \dimension},$ so the first row of $P$ is $x_1$ and so on. Then $P^tP = 1,$ i.e.~$P$ is unitary and thus,
 \begin{align*}
  \tr (A) &= \sum_{j=1}^\dimension A_{jj} = \sum_{j,k=1}^\dimension \delta_{k=j} A_{jk}  = \sum_{j,k=1}^\dimension (P^tP)_{kj} A_{jk}  \\
  & = \sum_{i,j,k=1}^\dimension P^t_{ki} A_{kj}P_{ij} = \sum_{i=1}^\dimension x_i^t A x_i.
 \end{align*}
\end{proof}

Recall the matrix $H$ of Lemma \ref{lem:anderson}.
It is the prototype of matrix which appears in the next lemma.%
\begin{lemma}\label{lem:EH}
Let $H \in \R^{d\times d}$ be symmetric with independent centered Gaussian entries in the upper triangular part s.t.~$H_{ii} \sim N(0,2)$ for $1\leq i \leq d$ and $H_{ij} \sim N(0,1)$ for $1\leq i < j \leq d$. Let $m,n \in \IN$. For $C \in \R^{m\times d}$, $D \in \R^{d \times d}$ and $E\in \R^{d \times n}$ it holds that
 \begin{equation}
 \IE[ \left( CHDHE \right)_{ij}] = \left(CD^tE \right)_{ij} + \left(CE \right)_{ij} \cdot \tr (D) \, , \quad 1\leq i \leq m, \, 1\leq j \leq n.
 \end{equation}
\end{lemma}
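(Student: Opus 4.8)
The plan is to compute the $(i,j)$ entry of $CHDHE$ by expanding everything into sums over indices and then taking the expectation, using only the covariance structure of $H$. First I would write
\[
(CHDHE)_{ij} = \sum_{a,b,c,d'} C_{ia} H_{ab} D_{bc} H_{cd'} E_{d'j},
\]
so that, by linearity of expectation,
\[
\IE[(CHDHE)_{ij}] = \sum_{a,b,c,d'} C_{ia} D_{bc} E_{d'j}\, \IE[H_{ab}H_{cd'}].
\]
Everything then hinges on the single quantity $\IE[H_{ab}H_{cd'}]$.

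The key computational step is to establish that, for the matrix $H$ described in Lemma~\ref{lem:EH}, one has
\[
\IE[H_{ab}H_{cd'}] = \delta_{ad'}\delta_{bc} + \delta_{ac}\delta_{bd'}.
\]
I would verify this by checking the four cases according to whether $a=b$ and whether $c=d'$. If $a\neq b$: then $H_{ab}$ has variance $1$ and is independent of $H_{cd'}$ unless $\{c,d'\}=\{a,b\}$ as unordered pairs; so $\IE[H_{ab}H_{cd'}]$ is $1$ exactly when $(c,d')=(a,b)$ or $(c,d')=(b,a)$, which is precisely $\delta_{ac}\delta_{bd'}+\delta_{ad'}\delta_{bc}$ (the two terms cannot both be $1$ since $a\neq b$). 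If $a=b$: then $H_{aa}$ has variance $2$, and $\IE[H_{aa}H_{cd'}]$ is nonzero only if $c=d'=a$, in which case it equals $2=\delta_{aa'}\delta_{bc}+\delta_{ac}\delta_{bd'}$ evaluated at $a=b=c=d'$. So the formula holds uniformly. (One could phrase this more slickly using $\IE[H_{ab}H_{cd'}] = \mathrm{Cov}(H_{ab},H_{cd'})$ and the fact that the symmetrization of an i.i.d.\ standard Gaussian upper-triangular array has exactly this covariance, but the case check is elementary and self-contained.)

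Finally I would substitute this identity back and split the sum into its two pieces. The term coming from $\delta_{ad'}\delta_{bc}$ gives
\[
\sum_{a,b} C_{ia} D_{bb} E_{aj} = \Big(\sum_b D_{bb}\Big)\sum_a C_{ia}E_{aj} = \tr(D)\,(CE)_{ij},
\]
and the term coming from $\delta_{ac}\delta_{bd'}$ gives
\[
\sum_{a,b} C_{ia} D_{ba} E_{bj} = \sum_{a,b} C_{ia} (D^t)_{ab} E_{bj} = (CD^tE)_{ij}.
\]
Adding the two pieces yields $(CD^tE)_{ij} + \tr(D)\,(CE)_{ij}$, which is the claim. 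I do not anticipate a genuine obstacle here; the only point requiring a little care is getting the covariance identity $\IE[H_{ab}H_{cd'}]=\delta_{ad'}\delta_{bc}+\delta_{ac}\delta_{bd'}$ to hold simultaneously in the diagonal and off-diagonal cases, which is exactly why the variance on the diagonal of $H$ is chosen to be $2$ rather than $1$.
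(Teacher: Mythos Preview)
Your proof is correct and follows essentially the same approach as the paper: expand $(CHDHE)_{ij}$ in coordinates, compute $\IE[H_{ab}H_{cd'}]$, and collect terms. Your formulation of the covariance as $\delta_{ad'}\delta_{bc}+\delta_{ac}\delta_{bd'}$ is a slightly cleaner repackaging of the paper's three-indicator expression $2\cdot\1_{\{a=b=c=d'\}}+\1_{\{a=c\neq b=d'\}}+\1_{\{a=d'\neq b=c\}}$, but the argument is otherwise the same.
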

\begin{proof}
We note that $1\leq k,l,p,q \leq d$ we have
\begin{equation*} \IE[H_{kl}H_{pq}] = 2\1_{\{k=l=p=q\}} + \1_{\{k=p\neq l=q\}} + \1_{\{k=q \neq l=p\}} \, . 
\end{equation*}
We can use that on the matrix product
\begin{equation*}
   \left( CHDHE \right)_{ij} = \sum_{k,l,p,q = 1}^d C_{ik}H_{kl}D_{lp}H_{pq}E_{qj}
 \end{equation*}
to evaluate
\begin{align*}
   \IE[&\left(CHDHE \right)_{ij}] = 2 \sum_{k,l,p,q = 1}^d \1_{\{k=l=p=q\}} C_{ik}D_{lp}E_{qj} \\
  & \quad + \sum_{k,l,p,q = 1}^d \1_{\{k=p\neq l=q\}} C_{ik}D_{lp}E_{qj} \\
  & \quad + \sum_{k,l,p,q = 1}^d \1_{\{k=q \neq l=p\}} C_{ik}D_{lp}E_{qj} \\
  & = 2 \sum_{k=1}^d C_{ik}D_{kk}E_{kj} + \sum_{k=1}^d \sum_{l=1,l\neq k}^d C_{ik}D_{lk}E_{lj} + \sum_{k=1}^d\sum_{l=1,l\neq k}^d C_{ik}D_{ll}E_{kj} \\
  & = \left(CD^tE \right)_{ij} + \left(CE \right)_{ij} \cdot \tr (D) \, .
\end{align*}
\end{proof}

\bibliographystyle{plain}
\bibliography{propos_1}

\end{document}